\newtheorem{remark}{Remark}[section]
\newtheorem{Example}{Example}
\newtheorem{theorem}{Theorem}[section]
\newtheorem{lemma}{Lemma}[section]
\newtheorem{col}{Corollary}[section]
\newcommand{\be}{\begin{equation}}
\newcommand{\ee}{\end{equation}}
\begin{document}
\setlength{\baselineskip}{16pt} \pagestyle{myheadings}

\topmargin0mm \topsep 2mm \markboth{\centerline{\small
}}{\centerline{\small\  }}\thispagestyle{empty}

\title{\bf Dynamics of a memory-based diffusion model with spatial heterogeneity and nonlinear boundary condition}
\author{\small Quanli Ji$^{a,}{^b}$, \ \ Ranchao Wu$^a$\footnote{Corresponding author. E-mail address: rcwu@ahu.edu.cn (R. C. Wu)}, \ \
Tonghua Zhang${^b}$\\
\centerline{\footnotesize$^a$ School of Mathematical Sciences,  Anhui University, Hefei 230601, China}\\
\centerline{\footnotesize$^b$ Department of Mathematics, Swinburne University of Technology, Hawthorn, VIC 3122, Australia}}

\date{}

\maketitle

\begin{quote}
\noindent {\bf Abstract.}
In this work, we study the dynamics of a spatially heterogeneous single population model with the memory effect and nonlinear boundary condition.
By virtue of the implicit function theorem and Lyapunov-Schmidt reduction, spatially nonconstant positive steady state solutions appear from two trivial solutions,
respectively. By using bifurcation analysis, the Hopf bifurcation associated with one spatially nonconstant positive steady state is found to occur.
The results complement the existing ones. Specifically, it is found  that with the interaction of spatial heterogeneity and nonlinear boundary condition, when the memory
term is stronger than the interaction of the interior reaction term and the boundary one,  the memory-based diffusive model has a single stability switch from stability to
instability, with the increase of the delayed memory value. Therefore, the memory delay  will lead to a single stability  switch of such memory-based diffusive model  and
consequently the Hopf bifurcation will happen in the model.

\noindent {\bf Key words:}
memory effect;  nonlinear boundary condition; spatial heterogeneity;  nonconstant steady state; Hopf bifurcation

\noindent {\bf Mathematics Subject Classification:}  35B32, 35K57, 34L10
\end{quote}

\normalsize


\section{Introduction}

It is well known that various reaction-diffusion models based on the Fick's law have been extensively proposed to understand practical problems of many disciplines,
such as ecology, chemistry and biology \cite{Ecology-2001,Turing-1952,Murray-book}. Aside from the Fick's law, some practical problems with the biased animal movements
are affected by other factors, including heterogeneous living environment, the resource distribution, the spatial memory and cognition \cite{2003-Cantrell-Cosner-book,morphogen,2013-Ecology-letter-WF.Fagan}. In view of some biological taxis phenomena, many chemotaxis reaction-diffusion models have
been widely established to describe the attractive or repulsive animal movement, which could interpret the biased phenomena \cite{2010prey-taxis,2011-Physica-D-Chemotaxis,2014-Physica-KS-Chemotaxis}. Recently, it has been emphasized in a review paper \cite{2013-Ecology-letter-WF.Fagan}
that the spatial memory and cognition are of the significance to describe the biased animal movement, especially movements of high-developed animals.
According to the inspiration of the review paper \cite{2013-Ecology-letter-WF.Fagan}, Shi et al. \cite{2020-JDDE-Shi-memory} first incorporated the episodic-like spatial memory of animals into
the reaction-diffusion model via a modified Fick's law
and established the following memory-based diffusion model in the self-contained way
\begin{equation}\label{eq1}
 \begin{cases}
    \frac{\partial{u}}{\partial{t}}=d_{1}\triangle{u}+d_{2}\nabla\cdot({u}\nabla{u_{\sigma}})+g({u}),  &\text{$x \in \Omega, t >0 $},\\
    \partial_{\overrightarrow{n}}u=0,                           &\text{$x \in \partial{\Omega}, t >0 $},
  \end{cases}
\end{equation}
where $u=u(x,t)$ describes the population density at location $x$ and time $t$, and $u_{\sigma}=u(x,t-\sigma)$, the time delay $\sigma\geq{0}$ represents the averaged memory period,
and it was shown in \cite{2020-JDDE-Shi-memory} that the stability of a spatially homogeneous steady state fully depends on the relationship between two coefficients $(d_{1}$ and
$d_{2})$ but is independent of time delay. In the sequel, Shi et al. \cite{2020-Nonlinearity-shijunping-two-delay} considered the factor of maturation delay into memory-based
diffusion model (\ref{eq1}), for which spatially homogeneous and nonhomogeneous periodic solutions could be obtained from the Hopf bifurcation respectively.
Song et al. \cite{2020-JDE-songyongli-TH with memory} investigated the memory-based diffusion model (\ref{eq1}) with nonlocal effect in reaction term, which
showed that the emergence of Turing-Hopf and double Hopf bifurcations could be induced by the joint effects of spatial memory and nonlocal reaction.
Then Song et al. \cite{2021-SAM-consumer-memory} proposed a novel consumer-resource model with random diffusions and consumer's spatial memory, in which the resource species has no memory or cognition, and it is found that Hopf bifurcation and stability switches emerge via considering the memory-based diffusion coefficient and the
averaged memory period of the consumer as the key parameters.
Shi et al. \cite{2021JMB-distributed-memory} introduced the spatiotemporally distributed memory into the model (\ref{eq1}), which can lead to rich spatiotemporal dynamics through spatially nonhomogeneous steady states and Hopf bifurcations.
Recently Shi and Shi \cite{2024JDE-distributed} replaced the Nuemann boundary condition and the discrete memory of the model (\ref{eq1}) with the logistic reaction term into the Dirichlet boundary condition and temporally distributed memory, and for the strong kernel case, it is shown that Hopf bifurcation and stability switches occur in the intermediate value of delays.
For more works, refer scholars to \cite{2021JDE-S,2023JDE-LS,2023JMB-ZH,2023JMBopen-problems} and references therein.

In recent years, the combination of animal movements and spatial heterogeneity has been one of the dominant topics on the population dynamics
\cite{2003-Cantrell-Cosner-book,2019JMPA-heterogeneity,2002-Lou-heterogeneity,2006-JDE-Louyuan-h,2013-JDE-HeXq,2016-CPAM-HeXq,2018-JDE-chen-lou-wei,2019-DCDS-Shi-heterogeneity,
2021-JDDE-wangchuncheng-memory,2023AML,2023DCDSB}.
In addition, heterogeneous resource supply is more realistic as it is inevitable that there exist some factors, such as humidity, topography, climate, etc., which could restrict the resource
distribution \cite{2003-Cantrell-Cosner-book,2019JMPA-heterogeneity,2002-Lou-heterogeneity,2006-JDE-Louyuan-h}.
Therefore, Wang et al. \cite{2021-JDDE-wangchuncheng-memory} incorporated the spatially heterogeneous resource function into the memory-based diffusion model
\begin{equation}\label{eq2}
 \begin{cases}
    \frac{\partial{u}}{\partial{t}}=d_{1}\triangle{u}+d_{2}\nabla\cdot({u}\nabla{u_{\sigma}})+u(m(x)-u),  &\text{$x \in \Omega, t >0 $},\\
    \partial_{\overrightarrow{n}}u=0,                           &\text{$x \in \partial{\Omega}, t >0 $},
  \end{cases}
\end{equation}
where $m(x)$ is the local carrying capacity or the intrinsic growth rate that represents the situation of the resources.
Without memory effect, $i.e.$, $d_{2}=0$, the model (\ref{eq2}) becomes  the classic logistic reaction-diffusion model with spatial heterogeneity    \cite{2003-Cantrell-Cosner-book,2006-JDE-Louyuan-h}, which showed that there is a unique positive spatially nonhomogeneous steady state solution and it is globally asymptotically stable. 
However, with the memory effect, $i.e.$, $d_{2}\neq0$, it was shown in \cite{2021-JDDE-wangchuncheng-memory} that as memory-induced diffusion rate is relatively small, a spatially nonhomogeneous steady state solution of global existence is linearly stable;
while spatially nonhomogeneous periodic solutions could emerge via the Hopf bifurcation as memory-induced diffusion rate exceeds a critical value.
For more details, please see \cite{2003-Cantrell-Cosner-book,2019JMPA-heterogeneity,2002-Lou-heterogeneity,2006-JDE-Louyuan-h,2013-JDE-HeXq,2016-CPAM-HeXq,2018-JDE-chen-lou-wei,2019-DCDS-Shi-heterogeneity,
2021-JDDE-wangchuncheng-memory,2023AML,2023DCDSB,2020-DCDSA-anqi-memory-nonlocal,1996busenberg,2015-JDE-guoshangjiang} and references therein.

It was frequently found that once the individual animals reach the boundary $\partial{\Omega}$ of the habitat, they will be taken outside the habitat at a rate 
which also depends on its population density, see \cite{2006NBC-CR} for the different  boundary condition. Interested readers could refer to the literatures \cite{2003-Cantrell-Cosner-book,2020-JDDE-Shi-memory,2021-JDDE-wangchuncheng-memory,2006NBC-CR,2021JDE-guo} and references therein,
in order to better understand the biological justification of the nonlinear boundary condition. 
What about the dynamics in the heterogeneous reaction-diffusion model with the interaction of memory effect and nonlinear boundary condition? That is interesting. 
With the incorporation of nonlinear boundary condition into the model, it takes the following form
\begin{equation}\label{eq3}
 \begin{cases}
    \frac{\partial{u}}{\partial{t}}=\triangle{u}+d\nabla\cdot({u}\nabla{u_{\sigma}})+{\lambda}uf(x,u),  &\text{$x \in \Omega, t >0 $},\\
     \partial_{\overrightarrow{n}}u={\lambda}r(x)g(u),                           &\text{$x \in \partial{\Omega}, t >0 $},
  \end{cases}
\end{equation}
and the steady state solutions of (\ref{eq3}) satisfy
\begin{equation}\label{eq4}
 \begin{cases}
    0=\triangle{u}+d\nabla\cdot({u}\nabla{u})+{\lambda}uf(x,u),  &\text{$x \in \Omega$},\\
     \partial_{\overrightarrow{n}}u={\lambda}r(x)g(u),                           &\text{$x \in \partial{\Omega}$},
  \end{cases}
\end{equation}
where $u=u(x,t)$ is the population density at the spatial location $x$ and at time $t$, $u_{\sigma}=u(x,t-\sigma)$, one delay $\sigma\geq{0}$ is the averaged memory period, $d$ describes the coefficient of memory-based diffusion, $\Omega$ is a bounded domain in $\mathbb{R}^{N}$~$(1\leq{N}\leq3)$ with a smooth boundary $\partial{\Omega}$, $\partial_{\overrightarrow{n}}u=\nabla{u}\cdot{\overrightarrow{n}}$, $\overrightarrow{n}$ is the outward unit normal vector on $\partial{\Omega}$, $\lambda$ is a positive parameter.

In the past decades, these problems of reaction-diffusion models with nonlinear boundary conditions have been attracting increasing interests. To the best of our knowledge,
there have been some results about reaction-diffusion models with nonlinear boundary conditions. The blowup of solutions were studied in \cite{1974NBC-HA,1975NBC-WW,1991NBC-JL,1993NBC-MX,1994NBC-HB}; the wellposedness and asymptotical behavior of solutions were investigated in \cite{1997NBC-AN,1999NBC-JM,2000NBC-JM,2001NBC-AR,2002NBC-AR,2007CVPAO,2016NBC-AR};
and the boundary layer solutions were considered in \cite{2004NBC-JM,2005NBC-XC,2008NBC-LD,2011NBC-MM};
the existence, uniqueness and stability of steady state solutions were analysed by bifurcation method and other related methods
in \cite{2000NBC-UM,2002NBC-UM,2006NBC-CR,2007NBC-JG,2009NBC-CR,2011NBC-GM,2012NBC-UM,2013NBC-UM,2014NBC-HQ,2015NBC-HQ,2016NBC-HQ,2018JDE-shi,2023JDE,
2021JDE-guo,2023JDE-guo,2023Proc-guo}.
Recently, Liu and Shi \cite{2018JDE-shi} considered a scalar reaction-diffusion model with nonlinear boundary condition,  and the existence and stability
of the bifurcated steady states were obtained through a unified approach. Inspired by the work \cite{2018JDE-shi}, Li et al. \cite{2023JDE} proposed and investigated a
reaction-diffusion-advection model with nonlinear boundary condition, for which the existence and stability of the bifurcated steady states  were analysed.
Guo \cite{2021JDE-guo} established a single population reaction-diffusion model with nonlocal delayed and nonlinear boundary condition
and studied the existence, stability, and multiplicity of the steady states and periodic solutions via the implicit function theorem and Lyapunov-Schmidt reduction.
Note that global dynamics of a Lotka-Volterra competition-diffusion system with nonlinear boundary conditions were investigated in \cite{2023JDE-guo}.
Please see \cite{1974NBC-HA,1975NBC-WW,1991NBC-JL,1993NBC-MX,1994NBC-HB,
1997NBC-AN,1999NBC-JM,2000NBC-JM,2001NBC-AR,2002NBC-AR,2007CVPAO,2016NBC-AR,
2004NBC-JM,2005NBC-XC,2008NBC-LD,2011NBC-MM,
2000NBC-UM,2002NBC-UM,2006NBC-CR,2007NBC-JG,2009NBC-CR,2011NBC-GM,2012NBC-UM,2013NBC-UM,2014NBC-HQ,2015NBC-HQ,2016NBC-HQ,2018JDE-shi,2023JDE,
2021JDE-guo,2023JDE-guo,2023Proc-guo} and references therein for more details.

Throughout the paper, we always assume that $f$, $g$, $r$ satisfy the following basic assumption:
\begin{eqnarray*}
(A0) ~~
\begin{cases}
\text{$f\in{C^{1,\tilde{\alpha}}(\bar{\Omega}\times\mathbb{R},\mathbb{R})}$, $g\in{C^{1,\tilde{\alpha}}(\mathbb{R},\mathbb{R})}$, $g(0)=0$, $r\in{C^{1,\tilde{\alpha}}(\partial{\Omega},\mathbb{R})}$ for $\tilde{\alpha}\in(0,1)$}. ~~~~~~~~~~~~~~~\\
\text{$f(x,\cdot)\in{L^{\infty}}(\Omega)$, $f_{u}(x,\cdot)\in{L^{\infty}}(\Omega)$, $g(\cdot)\in{L^{\infty}}$, $g_{u}(\cdot)\in{L^{\infty}}$,
$r(x)\in{L^{\infty}}(\partial\Omega)$, $r(x)g(u)\leq{0}$}.
\end{cases}
\end{eqnarray*}
We would like to investigate the existence and stability of  nonconstant steady states and whether there will exist the Hopf bifurcation as the memory delay varies.
It is found that the memory delay $\sigma$ could induce  the occurrence of  the Hopf bifurcation in model (\ref{eq3}) near $u_{\lambda}^{*}$
if the memory reaction term is stronger than the interaction of the interior reaction term and the boundary reaction one. The result is different from those in 
the previous literatures. 

The layout of the paper is organized as follows. In Section 2, we introduce the existence of the bifurcated nontrivial steady state solutions.
In Section 3, stability and bifurcation analysis associated with one nonconstant positive steady state are carried out.
In Section 4, the stability analysis associated with the other nonconstant positive steady state is presented.
Finally, some discussions are drawn in Section 5.

Throughout the paper, we shall introduce some notations for later use. Denote by $L^{p}(\Omega)(p\geq N)$
the Lebesgue space of integrable functions defined on $\Omega$, and let $W^{k,p}(k\geq0)$ be the Sobolev space
of the $L^{p}-$functions $f(x)$ defined on $\Omega$ whose derivatives $\frac{d^{n}}{dx^{n}}f(n=1,...,k)$ also belong to $L^{p}(\Omega)$.
Define spaces $\mathcal{X}:=W^{2,p}(\Omega)$ and $\mathcal{Y}:=L^{p}(\Omega)\times W^{1-\frac{1}{p},p}(\partial{\Omega})$
where $p>N$. For a space $Z$, define by $Z_{\mathbb{C}}:=Z\oplus{i}Z=\{x_{1}+ix_{2}|x_{1},x_{2}\in Z\}$.
For a linear operator L, we respectively use $\mathcal{N}(L)$ and $\mathcal{R}(L)$ as the null space and the range space,
and denote by $L[w]$ the image of $w$ under the linear mapping $L$. For a multilinear operator $L$, denote by $L[w_{1},w_{2},...,w_{j}]$
the image of $(w_{1},w_{2},...,w_{j})$ under $L$. For a nonlinear operator $T$, denote by $D_{u}T$ the partial derivative of $T$ with respect to $u$.
Denote by $\mathbb{N}$ the set of positive integers and $\mathbb{N}_{0}=\mathbb{N}\bigcup{\{0\}}$ the set of nonnegative integers.

\section{Existence of nontrivial steady states}
This section is devoted to the existence of spatially nonhomogeneous positive steady state of the model (\ref{eq3}),
which is essentially the steady state solutions of (\ref{eq4}). In what follows, we consider the nontrivial solutions of (\ref{eq4}) bifurcating from the known trivial solutions.
To this end, define a nonlinear mapping $T: \mathcal{X}\times{\mathbb{R}}\rightarrow{\mathcal{Y}}$ by
\begin{equation}\label{eq5}
T(u,\lambda)=\bigg(\triangle{u}+d\nabla\cdot({u}\nabla{u})+{\lambda}uf(x,u),
\partial_{\overrightarrow{n}}u-{\lambda}r(x)g(u)\bigg)^{T}
:=\left(\begin{array}{c}
   T_{1}(u,\lambda)\\
   T_{2}(u,\lambda)
\end{array}\right),
\end{equation}
and it follows from the assumption $(A0)$ that any solution $(\lambda,u)$ of $T(u,\lambda)=0$ is indeed a classical solution of class $C^{2,\tilde{\alpha}}(\bar{\Omega})$.
For the sake of completeness, we first give trivial solutions. It is easy to see that (\ref{eq4}) has a line of trivial solutions
\begin{align}\label{eq6}
\Gamma_{0}=\big\{(0,\lambda):\lambda>0\big\},
\end{align}
in the $u-\lambda$ plane. On the other hand, any constant $u_{1}\geq0$ is a solution to (\ref{eq4}) for $\lambda=0$.
Therefore, (\ref{eq4}) has another line of trivial solutions
\begin{align}\label{eq7}
\Gamma_{u_{1}}=\big\{(u_{1},0): u_{1}\in\mathbb{R_{+}}\big\},
\end{align}
in the $u-\lambda$ plane. In the following, it is shown that the nontrivial solutions of (\ref{eq4}) can emerge from $\Gamma_{0}$
at some bifurcation point $(u,\lambda)=(0,\lambda_{*})$, or from $\Gamma_{u_{1}}$
at some bifurcation point $(u,\lambda)=(u_{1},0)$. In a special case, the nontrivial solutions also emerge from $(u,\lambda)=(0,0)$,
the intersection point of $\Gamma_{0}$ and $\Gamma_{u_{1}}$.

A point $(0,\lambda_{*})$ is called a bifurcation point on the line of $\Gamma_{0}$
if there exists a sequence $(u^{n},\lambda^{n})$ of solutions to (\ref{eq4}) such that
$0\neq u^{n}\rightarrow{0}$, $\lambda^{n}\rightarrow{\lambda_{*}}$ in $C(\bar{\Omega})$ as $n\rightarrow{\infty}$.
The bifurcation point on the line of $\Gamma_{u_{1}}$ can be defined similarly.
In the sequel, we study the bifurcation of nontrivial solutions of (\ref{eq4})
from the two lines of trivial solutions (\ref{eq6}) and (\ref{eq7}).
We first obtain the following lemma with respect to the possible bifurcation points
by the arguments similar to those in \cite{2018JDE-shi}.

\begin{lemma}\label{le1}
Suppose that $(A0)$ is satisfied.
\begin{itemize}
\item[(i)] If $(0,\lambda_{*})$ with $\lambda_{*}>0$ is a bifurcation point of (\ref{eq4})
regarding the trivial branch $\Gamma_{0}$, then $\lambda_{*}>0$ is an eigenvalue of
\begin{equation}\label{eq8}
 \begin{cases}
    -\triangle{v}={\lambda}f(x,0)v,               &\text{$x \in \Omega$},\\
     \partial_{\overrightarrow{n}}v={\lambda}r(x)g_{u}(0)v,           &\text{$x \in \partial{\Omega}$}.
  \end{cases}
\end{equation}

\item[(ii)] If $(u_{1},0)$ with $u_{1}>0$ is a bifurcation point of (\ref{eq4})
regarding the trivial branch $\Gamma_{u_{1}}$, then $u_{1}>0$ satisfies
\begin{eqnarray*}
(A1) ~~u_{1}\int_{\Omega}f(x,u_{1})dx+(1+du_{1})g(u_{1})\int_{\partial{\Omega}}r(x)dS=0.~~~~~~~~~~~~~~~~~~~~~~~~~~~~~~~~~~~~~~~~~~~~~~~~~
\end{eqnarray*}

\end{itemize}
\end{lemma}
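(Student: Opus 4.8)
The plan is to establish the two necessary conditions separately, since they rely on different mechanisms: part (i) exploits the fact that the memory term $d\nabla\cdot(u\nabla u)$ is quadratic in $u$ together with $g(0)=0$, so that the linearization at $u=0$ is exactly the eigenvalue problem (\ref{eq8}); part (ii) rests on an integral identity obtained by integrating the interior equation of (\ref{eq4}) over $\Omega$ and using the divergence theorem. In neither case do I need the full strength of the implicit function theorem, only a limiting argument along the defining sequence $(u^n,\lambda^n)$.

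For part (i), I would start from the sequence of nontrivial solutions $(u^n,\lambda^n)$ with $0\neq u^n\to 0$ and $\lambda^n\to\lambda_*$, and normalize by setting $v^n:=u^n/\|u^n\|_{\mathcal X}$, so that $\|v^n\|_{\mathcal X}=1$. Writing $u^n=\|u^n\|_{\mathcal X}\,v^n$, the memory term becomes $\nabla\cdot(u^n\nabla u^n)=\|u^n\|_{\mathcal X}^2\,\nabla\cdot(v^n\nabla v^n)$; after dividing the interior equation of (\ref{eq4}) by $\|u^n\|_{\mathcal X}$ it therefore carries an extra scalar factor $\|u^n\|_{\mathcal X}\to 0$. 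Since $\nabla\cdot(v^n\nabla v^n)=v^n\triangle v^n+|\nabla v^n|^2$ stays bounded in $L^p(\Omega)$ (using $W^{2,p}\hookrightarrow C^1$ for $p>N$ so that $v^n,\nabla v^n$ are uniformly bounded while $\triangle v^n$ is bounded in $L^p$), this term drops out in the limit. On the boundary I would use $g(0)=0$ to expand $g(u^n)=g_u(0)\,u^n+o(\|u^n\|_{\mathcal X})$, so that dividing the boundary equation by $\|u^n\|_{\mathcal X}$ gives $\partial_{\overrightarrow{n}}v^n=\lambda^n r(x)\big[g_u(0)v^n+o(1)\big]$. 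Thus, up to terms tending to zero, $v^n$ solves the normalized counterpart of (\ref{eq8}) with parameter $\lambda^n$.

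The hard part is passing to the limit while guaranteeing a nontrivial limit. Because $\|v^n\|_{\mathcal X}=1$ and the right-hand sides of both equations remain bounded in $\mathcal Y$, the $L^p$ elliptic estimates for the Laplacian with Neumann/Robin-type boundary data yield a subsequence converging weakly in $\mathcal X=W^{2,p}(\Omega)$ and strongly in $C^1(\bar\Omega)$ to some $v$. The strong $C^1$ convergence lets me pass $f(x,u^n)\to f(x,0)$ and $v^n f(x,u^n)\to v\,f(x,0)$ to the limit and, combined with the vanishing scalar factor on the memory term, upgrades the convergence of the data to strong convergence in $\mathcal Y$; continuity of the solution operator then gives $v^n\to v$ in $\mathcal X$, so that $\|v\|_{\mathcal X}=1$ and in particular $v\neq 0$. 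Hence $v$ is a nonzero solution of (\ref{eq8}) with $\lambda=\lambda_*$, i.e.\ $\lambda_*$ is an eigenvalue. The point requiring the most care is precisely this nonvanishing of $v$, which is why the normalization is taken in the $\mathcal X$-norm rather than a weaker one.

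For part (ii) no normalization is needed. Starting again from solutions $(u^n,\lambda^n)$ with $u^n\to u_1$ uniformly and $\lambda^n\to 0$, I would integrate the interior equation of (\ref{eq4}) over $\Omega$. The divergence theorem converts $\int_\Omega\triangle u^n\,dx$ into $\int_{\partial\Omega}\partial_{\overrightarrow{n}}u^n\,dS=\lambda^n\int_{\partial\Omega}r(x)g(u^n)\,dS$ and $\int_\Omega\nabla\cdot(u^n\nabla u^n)\,dx$ into $\int_{\partial\Omega}u^n\,\partial_{\overrightarrow{n}}u^n\,dS=\lambda^n\int_{\partial\Omega}u^n r(x)g(u^n)\,dS$, in both cases invoking the boundary condition. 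Collecting terms yields
\[
0=\lambda^n\int_{\partial\Omega}r(x)g(u^n)\,dS+d\lambda^n\int_{\partial\Omega}u^n r(x)g(u^n)\,dS+\lambda^n\int_{\Omega}u^n f(x,u^n)\,dx.
\]
Since a non-constant solution cannot occur at $\lambda=0$ (multiplying $\nabla\cdot((1+du)\nabla u)=0$ by $u$ and integrating forces $\nabla u\equiv 0$), the bifurcating solutions satisfy $\lambda^n\neq 0$, so I may divide by $\lambda^n$ and then let $n\to\infty$, using $u^n\to u_1$ uniformly on $\bar\Omega$ and the $L^\infty$ bounds in $(A0)$ to interchange limit and integration. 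The boundary integrals factor as $g(u_1)\int_{\partial\Omega}r\,dS$ and $u_1 g(u_1)\int_{\partial\Omega}r\,dS$, while the interior integral tends to $u_1\int_\Omega f(x,u_1)\,dx$; combining them gives exactly the identity $(A1)$. The only delicate points here are the justification of dividing by $\lambda^n$ and the passage to the limit under the integral sign, both of which follow from the remarks above.
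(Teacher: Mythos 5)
Your part (ii) is essentially the paper's own proof: the paper likewise integrates the interior equation of (\ref{eq4}) over $\Omega$, applies Green's formula and the boundary condition to get $\lambda^{n}\int_{\Omega}u^{n}f(x,u^{n})dx=-\lambda^{n}\int_{\partial\Omega}(1+du^{n})r(x)g(u^{n})dS$, divides by $\lambda^{n}\neq 0$, and lets $n\to\infty$. The only difference is that the paper builds $\lambda^{n}\neq 0$ into the definition of the bifurcating sequence, so your auxiliary argument ruling out nonconstant solutions at $\lambda=0$ is a dispensable bonus; note that as stated it also needs $1+du^{n}>0$ on $\bar\Omega$, which $(A0)$ alone does not guarantee when $d<0$.

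Part (i) is where you genuinely diverge. The paper argues by contraposition via the implicit function theorem: if $\lambda_{*}$ is not an eigenvalue of (\ref{eq8}), then $D_{u}T(0,\lambda_{*})=\mathcal{L}_{\lambda_{*}}$ (at $u_{*}=0$ the quadratic memory term drops out of the linearization and $g(0)=0$ linearizes the boundary term) is a homeomorphism, so the trivial solutions are locally the only ones near $(0,\lambda_{*})$, contradicting bifurcation. That is three lines, with all compactness delegated to the Fredholm/invertibility theory of the Robin Laplacian. Your route is the direct one: normalize $v^{n}=u^{n}/\|u^{n}\|_{\mathcal{X}}$ and extract a nontrivial eigenfunction in the limit; it is correct in outline and more constructive, but it carries the elliptic bookkeeping you sketch, and it has one point that must be shored up. The paper's definition of a bifurcation point on $\Gamma_{0}$ only demands $u^{n}\to 0$ in $C(\bar\Omega)$, whereas your cancellation of the memory term uses $\|u^{n}\|_{\mathcal{X}}\to 0$. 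You must either adopt the definition in the $\mathcal{X}$-topology (as the paper itself implicitly does in part (ii), where $\|u^{n}-u_{1}\|_{\mathcal{X}}\to 0$), or upgrade $C^{0}$-smallness to $\mathcal{X}$-smallness by a bootstrap: rewrite the equation as $\nabla\cdot\big((1+du^{n})\nabla u^{n}\big)+\lambda^{n}u^{n}f(x,u^{n})=0$, observe $1+du^{n}\to 1$ uniformly, and iterate $L^{p}$/Schauder estimates. Likewise, the convergence of the boundary data $\lambda^{n}r(x)g(u^{n})/\|u^{n}\|_{\mathcal{X}}$ in the trace space $W^{1-\frac{1}{p},p}(\partial\Omega)$, which your ``continuity of the solution operator'' step requires, should be routed through the $C^{1}(\bar\Omega)$-convergence of $v^{n}$ furnished by the compact embedding $W^{2,p}\hookrightarrow C^{1}$ for $p>N$. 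With these repairs your proof of (i) is a valid alternative; the paper's argument is shorter, yours additionally exhibits the eigenfunction as the limit of the normalized bifurcating solutions.
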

\begin{proof}
Define the partial derivative of $T(u,\lambda)$ regarding $u$ evaluated at $(u_{*},\lambda)$, which is given by
\begin{equation}\label{eq9}
\mathcal{L}_{\lambda}v:=D_{u}T(u_{*},\lambda)[v]=
\left(\begin{array}{c}
   (1+du_{*})\triangle{v}+{\lambda}u_{*}f_{u}(x,u_{*})v+{\lambda}f(x,u_{*})v\\
   \partial_{\overrightarrow{n}}v-{\lambda}r(x)g_{u}(u_{*})v
\end{array}\right).
\end{equation}

$(i)$    If the conclusion is not true, then $\lambda_{*}$ is not an eigenvalue of (\ref{eq8}),
further $D_{u}T(0,\lambda_{*})$ is a homeomorphism for all $\lambda$ near $\lambda_{*}$.
It follows from the implicit function theorem that the trivial solution  $(0,\lambda)$ is
the unique solution of $T(u,\lambda)$ near $(0,\lambda_{*})$. Therefore, $\lambda_{*}$ is not a bifurcation point
along the line $\Gamma_{0}$. Hence if $\lambda_{*}$ is a bifurcation point,
then $\lambda_{*}$ must be an eigenvalue of (\ref{eq8}).

$(ii)$   If $(u_{1},0)$ with $u_{1}>0$ is a bifurcation point of (\ref{eq4})
regarding the trivial branch $\Gamma_{u_{1}}$, then it follows from the definition that there exists a sequence
$(u^{n},\lambda^{n})$ of solutions to (\ref{eq4}) with
\begin{eqnarray*}
0\neq \lambda^{n}\rightarrow{0} ~\text{and $\big\|{u^{n}-u_{1}}\big\|_{\mathcal{X}}\rightarrow{0}$, when $n\rightarrow\infty$}.
\end{eqnarray*}

By integration and the formula of Green, we have
\begin{eqnarray}\label{eq10}
\begin{split}
\lambda^{n}\int_{\Omega}u^{n}f(x,u^{n})dx
=&-\int_{\Omega}\triangle{u^{n}}dx-d\int_{\Omega}\nabla\cdot({u^{n}}\nabla{u^{n}})dx\\
=&-\int_{\partial{\Omega}}\frac{\partial{u^{n}}}{\partial{\overrightarrow{n}}}dS-d\int_{\partial{\Omega}}u^{n}\frac{\partial{u^{n}}}{\partial{\overrightarrow{n}}}dS\\
=&-\lambda^{n}\int_{\partial{\Omega}}(1+du^{n})r(x)g(u^{n})dS,
\end{split}
\end{eqnarray}
which implies that $\int_{\Omega}u^{n}f(x,u^{n})dx+\int_{\partial{\Omega}}(1+du^{n})r(x)g(u^{n})dS=0$ since $\lambda^{n}\neq{0}$.
By taking $n\rightarrow\infty$, we obtain $(A1)$.
\end{proof}

\subsection{Local bifurcation from $\Gamma_{0}$}

~\quad In the subsection, we investigate the bifurcation on $\Gamma_{0}=\big\{(0,\lambda):\lambda>0\big\}$.
Lemma \ref{le1} implies that $\lambda_{*}$ must be an eigenvalue of (\ref{eq8})
if $(0,\lambda_{*})$ with $\lambda_{*}>0$ is a bifurcation point.
Regarding the existence of such a principal eigenvalue, we have the following result from Umezu \cite{2006Umezu}.

\begin{lemma}\label{le2}
Consider the eigenvalue problem
\begin{equation}\label{eq11}
 \begin{cases}
    -\triangle{u}={\lambda}f(x,0)u,                     &\text{$x \in \Omega$},\\
     \partial_{\overrightarrow{n}}u={\lambda}r(x)g_{u}(0)u,    &\text{$x \in \partial{\Omega}$}.
  \end{cases}
\end{equation}
Assume that
\begin{equation}\label{eq12}
\text{either $f(x,0)\nleq{0}$ in $\Omega$ or $r(x)g_{u}(0)\nleq{0}$ on $\partial{\Omega}$},
\end{equation}
the problem (\ref{eq11}) has a unique positive principal eigenvalue $\lambda_{1}$ if and only if
\begin{equation}\label{eq13}
\text{$\int_{\Omega}f(x,0)dx+g_{u}(0)\int_{\partial{\Omega}}r(x)dS<0$},
\end{equation}
and it is characterized by the formula
\begin{eqnarray*}
\lambda_{1}=\inf\bigg\{Qu:u\in W^{1,2}(\Omega), \int_{\Omega}f(x,0)u^{2}dx+g_{u}(0)\int_{\partial{\Omega}}r(x)u^{2}dS>0\bigg\},
\end{eqnarray*}
where
\begin{eqnarray*}
Qu=\frac{\int_{\Omega}{\big|\nabla{u}\big|}^{2}dx}{\int_{\Omega}f(x,0)u^{2}dx+g_{u}(0)\int_{\partial{\Omega}}r(x)u^{2}dS}, ~~~~\text{for $u\in{W^{1,2}(\Omega)}$}.
\end{eqnarray*}
\end{lemma}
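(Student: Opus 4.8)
The plan is to recast (\ref{eq11}) variationally on $H := W^{1,2}(\Omega)$ and to obtain $\lambda_1$ as the minimum of the Rayleigh quotient $Qu$ by the direct method of the calculus of variations. Writing $B(u) := \int_{\Omega} f(x,0)u^{2}\,dx + g_{u}(0)\int_{\partial\Omega} r(x)u^{2}\,dS$ for the (indefinite) weight form and noting that $u$ solves (\ref{eq11}) weakly with parameter $\lambda$ precisely when $\int_{\Omega}\nabla u\cdot\nabla\phi\,dx = \lambda\,B_{\mathrm{bil}}(u,\phi)$ for every $\phi\in H$, where $B_{\mathrm{bil}}$ is the symmetric bilinear form polarizing $B$, I would minimize the Dirichlet energy $J(u)=\int_{\Omega}|\nabla u|^{2}\,dx$ over the constraint manifold $M:=\{u\in H : B(u)=1\}$. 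A minimizer, if it exists, satisfies the Euler--Lagrange equation with $\lambda_1=\min_{M}J$ as Lagrange multiplier, which is exactly the weak form of (\ref{eq11}); hence $\lambda_1$ is an eigenvalue and the variational formula in the statement follows by homogeneity.

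For the necessity of (\ref{eq13}), I would test with the admissible constant $u\equiv c$: here $J(c)=0$ while $B(c)=c^{2}\theta$ with $\theta:=\int_{\Omega} f(x,0)\,dx + g_{u}(0)\int_{\partial\Omega} r(x)\,dS$. If $\theta>0$ the constants already lie in the admissible cone $\{B>0\}$ and force $\inf Qu=0$, so no positive principal eigenvalue can exist; the borderline $\theta=0$ is handled by perturbing a constant along a direction that makes $B$ positive of first order, keeping $\inf Qu=0$ unattained. Thus a positive $\lambda_1$ requires $\theta<0$, which is (\ref{eq13}). The hypothesis (\ref{eq12}) enters to guarantee that $M$ is nonempty: if $f(x,0)\nleq 0$ I would choose a test function concentrated where $f(x,0)>0$, and if $r(x)g_{u}(0)\nleq 0$ one whose trace concentrates where the boundary weight is positive, in either case producing $u_0$ with $B(u_0)>0$.

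For the sufficiency, assume $\theta<0$. The main obstacle is coercivity: $J$ is only a seminorm, degenerate on constants, so the direct method does not apply verbatim and one must exploit $\theta<0$ to tame the constant component. I would split $u=\bar u + w$ with $\bar u=|\Omega|^{-1}\int_{\Omega}u$ and $\int_{\Omega}w=0$, so that $J(u)=\int_{\Omega}|\nabla w|^{2}\,dx$; the Poincar\'e--Wirtinger inequality and the compact trace embedding then control $\|w\|_{L^{2}(\Omega)}$ and $\|w\|_{L^{2}(\partial\Omega)}$ by $\|\nabla w\|_{L^{2}(\Omega)}$. Expanding $B(u)=\theta\,\bar u^{2}+2\bar u\,\ell(w)+B(w)$ with $\ell$ linear and $B(w)$ quadratic in $w$, the constraint $B(u)=1$ together with $\theta<0$ shows first that $\inf_{M}J>0$ (if $\|\nabla w_n\|\to 0$ along a minimizing sequence then $\ell(w_n),B(w_n)\to 0$, forcing $\theta\,\bar u_n^{2}\to 1>0$, impossible since $\theta<0$), and second that any minimizing sequence is bounded in $H$, since once $\|\nabla u_n\|$ is bounded the same relation prevents $\bar u_n$ from diverging.

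With boundedness in hand the remainder is routine: extract a weakly convergent subsequence $u_n\rightharpoonup u_*$ in $H$; by Rellich--Kondrachov and the compact trace map, $u_n\to u_*$ strongly in $L^{2}(\Omega)$ and $L^{2}(\partial\Omega)$, so $B(u_*)=1$ and $u_*\in M$, while weak lower semicontinuity of $J$ gives $J(u_*)\le\liminf J(u_n)=\lambda_1$, whence $u_*$ is a minimizer and solves (\ref{eq11}) with $\lambda=\lambda_1>0$. Since $B(|u_*|)=B(u_*)$ and $J(|u_*|)=J(u_*)$, the nonnegative function $|u_*|$ is also a minimizer, and the strong maximum principle together with the Hopf boundary lemma for the Robin-type condition upgrades it to a strictly positive principal eigenfunction. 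Finally, simplicity and the fact that $\lambda_1$ is the \emph{only} eigenvalue admitting a positive eigenfunction follow from the standard argument for indefinite-weight problems---comparing any competing positive eigenfunction with the principal one through the variational quotient, or invoking Krein--Rutman on the associated positive resolvent operator---which yields the uniqueness asserted in the lemma. This is precisely the content of Umezu's result \cite{2006Umezu}, and the route above indicates a self-contained proof.
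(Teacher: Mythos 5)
The paper offers no proof of this lemma at all: it is quoted verbatim as a known result of Umezu \cite{2006Umezu}, so there is no in-paper argument to compare against, and your proposal amounts to reconstructing the cited theorem from scratch. Your reconstruction follows the classical Brown--Lin/Umezu variational route and is essentially sound: the weak formulation $\int_{\Omega}\nabla u\cdot\nabla\phi\,dx=\lambda B_{\mathrm{bil}}(u,\phi)$ is correct for this boundary condition, hypothesis (\ref{eq12}) does exactly the job of making the admissible cone $\{B>0\}$ nonempty (with the boundary case handled by a bump whose trace term of order $\delta^{N-1}$ dominates the interior term of order $\delta^{N}$), and your coercivity argument via $u=\bar u+w$ with Young's inequality absorbing the cross term $2\bar u\,\ell(w)$ into $|\theta|\bar u^{2}$ is the right way to beat the degeneracy of $J$ on constants; the compactness, lower semicontinuity, and $|u_{*}|$-replacement steps are then routine. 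The one place where your sketch leans harder than you acknowledge is the necessity direction: showing $\inf Q=0$ when $\theta\geq 0$ does not by itself exclude a positive principal eigenvalue---you need the comparison fact that any eigenvalue $\lambda>0$ with positive eigenfunction $\varphi$ satisfies $\lambda\leq Q(u)$ for every admissible $u$, which follows from the Picone-type inequality $\int_{\Omega}|\nabla u|^{2}dx\geq\int_{\Omega}\nabla(u^{2}/\varphi)\cdot\nabla\varphi\,dx=\lambda B(u)$, or more directly from testing the equation against $\varphi^{-1}$ to get $\lambda\bigl[\int_{\Omega}f(x,0)dx+g_{u}(0)\int_{\partial\Omega}r\,dS\bigr]=-\int_{\Omega}|\nabla\varphi|^{2}\varphi^{-2}dx\leq 0$, with equality forcing $\varphi$ constant and hence contradicting (\ref{eq12}). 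You do gesture at this comparison in your final sentence on uniqueness, but it is logically needed earlier, in the ``only if'' half; with that step made explicit your proof is a legitimate self-contained alternative to the paper's bare citation, at the cost of roughly a page of standard but nontrivial detail.
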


Under the assumptions (\ref{eq12}) and (\ref{eq13}), it follows from Lemma \ref{le2} that $\lambda_{1}$ is the principal eigenvalue of (\ref{eq8}),
with an associated eigenfunction $\varphi_{1}$ satisfying $\int_{\Omega}\varphi_{1}^{2}(x)dx=1$.
Hence, Lemma \ref{le1} and Lemma \ref{le2} imply that $\mathcal{N}(\mathcal{L}_{\lambda_{1}})=\text{span}\{\varphi_{1}\}$.

It is easy to see that codim$\mathcal{R}(\mathcal{L}_{\lambda_{1}})=1$.
In fact, let $(y_{1},y_{2})^{T}\in\mathcal{R}(\mathcal{L}_{\lambda_{1}})$ and $v\in\mathcal{X}$ satisfy
\begin{equation}\label{eq14}
 \begin{cases}
    \triangle{v}+{\lambda_{1}}f(x,0)v=y_{1},                             &\text{$x \in \Omega$},\\
     \partial_{\overrightarrow{n}}v-{\lambda_{1}}r(x)g_{u}(0)v=y_{2},    &\text{$x \in \partial{\Omega}$},
  \end{cases}
\end{equation}
and for $v=\varphi_{1}$ in particular, it follows from Lemma \ref{le1} and Lemma \ref{le2} that we have
\begin{equation}\label{eq15}
 \begin{cases}
    -\triangle{\varphi_{1}}={\lambda_{1}}f(x,0)\varphi_{1},                            &\text{$x \in \Omega$},\\
     \partial_{\overrightarrow{n}}\varphi_{1}={\lambda_{1}}r(x)g_{u}(0)\varphi_{1},    &\text{$x \in \partial{\Omega}$}.
  \end{cases}
\end{equation}

Multiplying the equations in (\ref{eq14}) and (\ref{eq15}) by $\varphi_{1}$ and $v$, respectively, and integrating over the results, we obtain
\begin{eqnarray*}
\begin{split}
\int_{\Omega}\varphi_{1}y_{1}dx
=&\int_{\Omega}\varphi_{1}\triangle{v}dx+\lambda_{1}\int_{\Omega}f(x,0)\varphi_{1}vdx=\int_{\Omega}\varphi_{1}\triangle{v}dx-\int_{\Omega}v\triangle{\varphi_{1}}dx\\
=&\int_{\partial{\Omega}}\varphi_{1}\frac{\partial{v}}{\partial{\overrightarrow{n}}}dS
-\int_{\partial{\Omega}}v\frac{\partial{\varphi_{1}}}{\partial{\overrightarrow{n}}}dS
=\int_{\partial{\Omega}}\varphi_{1}y_{2}dS.
\end{split}
\end{eqnarray*}

Therefore, it is shown that $(y_{1},y_{2})^{T}\in\mathcal{R}(\mathcal{L}_{\lambda_{1}})$ if and only if
\begin{eqnarray*}
\int_{\Omega}y_{1}(x)\varphi_{1}(x)dx-\int_{\partial{\Omega}}y_{2}(x)\varphi_{1}(x)dS=0.
\end{eqnarray*}

Hence codim$\mathcal{R}(\mathcal{L}_{\lambda_{1}})=1$ and $\mathcal{L}_{\lambda_{1}}$ is a Fredholm operator with index zero.
Thus, we decompose the spaces: $\mathcal{X}=\mathcal{N}(\mathcal{L}_{\lambda_{1}})\oplus{\mathcal{X}_{1}}$ and  $\mathcal{Y}=\mathcal{R}(\mathcal{L}_{\lambda_{1}})\oplus{\mathcal{Y}_{1}}$.
In the sequel, we perform a Lyapunov-Schmidt reduction. Define projections
\begin{eqnarray*}
Q_{\mathcal{X}}:\mathcal{X}\rightarrow{\mathcal{X}_{1}},  \qquad\qquad   Q_{\mathcal{Y}}:\mathcal{Y}\rightarrow{\mathcal{Y}_{1}},
\end{eqnarray*}
then $T(u,\lambda)=0$ is equivalent to
\begin{eqnarray}\label{eq16}
u=u_{0}+u_{2},  \qquad   Q_{\mathcal{Y}}T(u_{0}+u_{2},\lambda)=0,  \qquad   (I-Q_{\mathcal{Y}})T(u_{0}+u_{2},\lambda)=0,
\end{eqnarray}
where
\begin{eqnarray*}
u_{0}=(I-Q_{\mathcal{X}})u\in\mathcal{N}(\mathcal{L}_{\lambda_{1}}),  \qquad   u_{2}=Q_{\mathcal{X}}u\in{\mathcal{X}_{1}}.
\end{eqnarray*}

Notice that $(I-Q_{\mathcal{Y}})T(0,\lambda_{1})=0$ and $(I-Q_{\mathcal{Y}})T_{u_{0}}(0,\lambda_{1})=\mathcal{L}_{\lambda_{1}}$.
Then it follows from the implicit function theorem that we have a continuously differentiable map
$h:{\mathcal{U}}\rightarrow{\mathcal{X}_{1}}$ such that $h(0,\lambda)=0$ and
\begin{eqnarray}\label{eq17}
(I-Q_{\mathcal{Y}})T(u_{0}+h(u_{0},\lambda),\lambda)\equiv{0},
\end{eqnarray}
where ${\mathcal{U}}$ is an open neighborhood of $(0,\lambda_{1})$ in $\mathcal{N}(\mathcal{L}_{\lambda_{1}})\times{\mathbb{R}}$.
Substituting $u_{2}=h(u_{0},\lambda)$ into the second equation of (\ref{eq16}) gives
\begin{eqnarray}\label{eq18}
\mathcal{T}(u_{0},\lambda):=Q_{\mathcal{Y}}T(u_{0}+h(u_{0},\lambda),\lambda)=0.
\mathcal{}\end{eqnarray}
Obviously, $\mathcal{T}(0,\lambda)=0$ and $\mathcal{T}_{u_{0}}(0,\lambda_{1})=0$.
Thus, each solution to $\mathcal{T}(u_{0},\lambda)=0$ in ${\mathcal{U}}$ one-to-one corresponds to some solution to $T(u,\lambda)=0$.

Now we give an explicit representation of the projection $Q_{\mathcal{Y}}$. Recall that if $\text{dim$\mathcal{Y}_{1}$}=1$,
then there exists $\phi\in{\mathcal{Y}}$ satisfying $\big\|{\phi}\big\|_{\mathcal{Y}}=1$ such that ${\mathcal{Y}_{1}}=\text{span}\{\phi\}$.
This means that there exists a constant $z$ such that $Q_{\mathcal{Y}}y=z\phi$ for $y\in\mathcal{Y}$. By the Hahn-Banach Theorem \cite{1980-Yoshida}, there exists a vector $\Psi$ in the dual space $\mathcal{Y}^{*}$ of $\mathcal{Y}$ such that
$\langle{\Psi,\phi}\rangle_{1}=1$ and $\langle{\Psi,y}\rangle_{1}=0$ for all $y\in{\mathcal{R}(\mathcal{L}_{\lambda_{1}})}$,
where $\langle{\cdot,\cdot}\rangle_{1}: \mathcal{Y}^{*}\times\mathcal{Y}\rightarrow{\mathbb{R}}$ denotes the duality between $\mathcal{Y}^{*}$ and $\mathcal{Y}$
and is defined as
\begin{eqnarray*}
\left \langle l,y \right \rangle_{1}=\int_{\Omega}l(x)y_{1}(x)dx-\int_{\partial{\Omega}}l(x)y_{2}(x)dS,
~~\text{for all $l\in\mathcal{Y}^{*}$ and $y=(y_{1},y_{2})^{T}\in\mathcal{Y}$}.
\end{eqnarray*}
Obviously, there exists a vector $\Psi\in\mathcal{Y}^{*}$ such that
\begin{eqnarray*}
\left \langle \Psi,y \right \rangle_{1}=\int_{\Omega}\varphi_{1}(x)y_{1}(x)dx-\int_{\partial{\Omega}}\varphi_{1}(x)y_{2}(x)dS,
\end{eqnarray*}
and hence that $\mathcal{N}(\Psi)=\mathcal{R}(\mathcal{L}_{\lambda_{1}})$. Further, we have $\langle{\Psi,y}\rangle_{1}=\langle{\Psi,Q_{\mathcal{Y}}y+(I-Q_{\mathcal{Y}})y}\rangle_{1}
=\langle{\Psi,Q_{\mathcal{Y}}y}\rangle_{1}+\langle{\Psi,(I-Q_{\mathcal{Y}})y}\rangle_{1}
=\langle{\Psi,Q_{\mathcal{Y}}y}\rangle_{1}=\langle{\Psi,z\phi}\rangle_{1}=z$, which means that the projection $Q_{\mathcal{Y}}$ is given by
$Q_{\mathcal{Y}}y=\langle{\Psi,y}\rangle_{1}\phi$ for any $y\in\mathcal{Y}$.

For every $u_{0}=\vartheta\varphi_{1}\in\mathcal{N}(\mathcal{L}_{\lambda_{1}})$ with $\vartheta\in{\mathbb{R}}$,
substituting this into (\ref{eq18}) and calculating the inner product with $\Psi$, we obtain $G(\vartheta,\lambda)=0$, where
$G:\mathbb{R}^{2}\rightarrow\mathbb{R}$
is precisely defined by $G(\vartheta,\lambda):=\langle{\Psi,\mathcal{T}(\vartheta\varphi_{1},\lambda)}\rangle_{1}
=\langle{\Psi,T(\vartheta\varphi_{1}+h(\vartheta\varphi_{1},\lambda),\lambda)}\rangle_{1}$. Obviously, $G(0,\lambda)=0$.
And it follows from Taylor Formula that $G: \mathbb{R}^{2}\rightarrow\mathbb{R}$ takes the form
\begin{eqnarray}\label{eq19}
G(\vartheta,\lambda)=\vartheta\bigg[\varrho(\lambda-\lambda_{1})+\frac{\kappa}{2}\vartheta+\frac{\nu}{6}\vartheta^{2}+
o\big(\vartheta^{3},\vartheta(\lambda-\lambda_{1})\big)\bigg],
\end{eqnarray}
where
\begin{eqnarray*}
\begin{cases}
     \varrho=\langle{\Psi,T_{\lambda{u}}[\varphi_{1}]}\rangle_{1},  \\
    \kappa=\langle{\Psi,T_{u{u}}[\varphi_{1},\varphi_{1}]}\rangle_{1},     \\
    \nu=\langle{\Psi,T_{uu{u}}[\varphi_{1},\varphi_{1},\varphi_{1}]}\rangle_{1}
    +3\langle{\Psi,T_{u{u}}[\varphi_{1},h_{u_{0}u_{0}}[\varphi_{1},\varphi_{1}]]}\rangle_{1}.
\end{cases}
\end{eqnarray*}

Here, the bilinear from $T_{uu}[\cdot,\cdot]$ and trilinear form $T_{uuu}[\cdot,\cdot,\cdot]$ denote
the second and third order Fr$\acute{e}$chet derivatives of $T$ with respect to $u$,
evaluated at $(u,\lambda)=(0,\lambda_{1})$, respectively. Let $h_{u_{0}}$ and $h_{u_{0}u_{0}}$ be
the first and second order Fr$\acute{e}$chet derivatives of $h$ with respect to $u_{0}$, evaluated at $(u_{0},\lambda)=(0,\lambda_{1})$, respectively.
Obviously, $h_{u_{0}}=0$. We still need to get $h_{u_{0}u_{0}}[\varphi_{1},\varphi_{1}]$.
It follows from (\ref{eq17}) that $\mathcal{L}_{\lambda_{1}}h_{u_{0}u_{0}}[\varphi_{1},\varphi_{1}]+(I-Q_{\mathcal{Y}})T_{uu}[\varphi_{1},\varphi_{1}]=0$ and
hence that
\begin{eqnarray}\label{eq20}
h_{u_{0}u_{0}}[\varphi_{1},\varphi_{1}]
=-\mathcal{L}_{\lambda_{1}}^{-1}(I-Q_{\mathcal{Y}})T_{uu}[\varphi_{1},\varphi_{1}].
\end{eqnarray}

In the sequel, there exist two cases to study the existence of nontrivial zeros of $G(\cdot,\lambda)$. We first consider the case where $\kappa\neq{0}$.
It follows from the implicit function theorem that there exist a constant $\varepsilon>0$ and a continuously differentiable mapping $\vartheta: (\lambda_{1}-\varepsilon,\lambda_{1}+\varepsilon)\rightarrow\mathbb{R}$ such that $G(\vartheta_{\lambda},\lambda)\equiv{0}$ for $\lambda\in(\lambda_{1}-\varepsilon,\lambda_{1}+\varepsilon)$. In fact, we obtain
\begin{eqnarray*}
\vartheta_{\lambda}=\frac{2\varrho(\lambda_{1}-\lambda)}{\kappa}+o\big(|\lambda-\lambda_{1}|\big).
\end{eqnarray*}
Therefore, (\ref{eq4}) has a nontrivial solution $u_{\lambda}=\vartheta_{\lambda}\varphi_{1}+h(\vartheta_{\lambda}\varphi_{1},\lambda)$, which exists for $\lambda\in(\lambda_{1}-\varepsilon,\lambda_{1})\cup(\lambda_{1},\lambda_{1}+\varepsilon)$ and satisfies
$\displaystyle\lim_{\lambda\rightarrow \lambda_{1}}u_{\lambda}=0$.

We next investigate the case where $\kappa={0}$ and $\nu\neq{0}$, which could obtain the multiplicity of nontrivial solutions of (\ref{eq4}).
Then the nontrivial zeros of $G(\cdot,\lambda)$ undergo a saddle-node bifurcation near $\lambda_{1}$. More precisely, if $\varrho\nu<0$
\text{(resp., $\varrho\nu>0$)}, then near the origin, only two nontrivial zeros $\vartheta=\vartheta^{\pm}_{\lambda}$ of $G(\cdot,\lambda)$ exist for $\lambda>\lambda_{1}$ \text{(resp., $\lambda<\lambda_{1}$)}, and no nontrivial zeros exist for $\lambda\leq\lambda_{1}$ \text{(resp., $\lambda\geq\lambda_{1}$)}.
Meanwhile, we have $T_{u{u}}[\varphi_{1},\varphi_{1}]\in\mathcal{R}(\mathcal{L}_{\lambda_{1}})$ due to $\kappa={0}$. And it is known that the mapping $\mathcal{L}_{\lambda_{1}}: \mathcal{X}_{1}\rightarrow\mathcal{R}(\mathcal{L}_{\lambda_{1}})$ is invertible. Hence, there exists precisely a $\varsigma\in\mathcal{X}_{1}$
such that $\mathcal{L}_{\lambda_{1}}\varsigma=-T_{u{u}}[\varphi_{1},\varphi_{1}]$. Then it follows from (\ref{eq20}) that $h_{u_{0}u_{0}}[\varphi_{1},\varphi_{1}]=\varsigma$. Further, the three quantities $\varrho$, $\kappa$ and $\nu$ can be listed as follows:
\begin{eqnarray*}
\begin{cases}
     \varrho=\int_{\Omega}\varphi_{1}^{2}(x)f(x,0)dx+g_{u}(0)\int_{\partial{\Omega}}\varphi_{1}^{2}(x)r(x)dS,  \\
    \kappa=2d\int_{\Omega}\varphi_{1}(x)\nabla\cdot({\varphi_{1}(x)}\nabla{\varphi_{1}(x)})dx
    +2\lambda_{1}\int_{\Omega}\varphi_{1}^{3}(x)f_{u}(x,0)dx+\lambda_{1}g_{uu}(0)\int_{\partial{\Omega}}\varphi_{1}^{3}(x)r(x)dS,     \\
    \nu=3\lambda_{1}\int_{\Omega}\varphi_{1}^{4}(x)f_{uu}(x,0)dx+\lambda_{1}g_{uuu}(0)\int_{\partial{\Omega}}\varphi_{1}^{4}(x)r(x)dS
       +3d\int_{\Omega}\varphi_{1}(x)\nabla\cdot({\varphi_{1}(x)}\nabla{\varsigma(x)})dx+\\
 ~~  3d\int_{\Omega}\varphi_{1}(x)\nabla\cdot({\varsigma(x)}\nabla{\varphi_{1}(x)})dx
    +6\lambda_{1}\int_{\Omega}\varphi_{1}^{2}(x){\varsigma(x)}f_{u}(x,0)dx+3\lambda_{1}g_{uu}(0)\int_{\partial{\Omega}}\varphi_{1}^{2}(x){\varsigma(x)}r(x)dS.
\end{cases}
\end{eqnarray*}

In short, we conclude the following results.

\begin{theorem}\label{th2.1}
Suppose that $(A0)$, (\ref{eq12}) and (\ref{eq13}) are satisfied.
\begin{itemize}
\item[(i)] If $\kappa\neq{0}$, then there exist a constant $\varepsilon$ and a continuously differentiable mapping $\lambda\mapsto{\vartheta_{\lambda}}$ from $(\lambda_{1}-\varepsilon,\lambda_{1}+\varepsilon)$ to $\mathbb{R}$ such that
(\ref{eq4}) has a nontrivial solution $u_{\lambda}=\vartheta_{\lambda}\varphi_{1}+h(\vartheta_{\lambda}\varphi_{1},\lambda)$, which exists for $\lambda\in(\lambda_{1}-\varepsilon,\lambda_{1})\cup(\lambda_{1},\lambda_{1}+\varepsilon)$ and satisfies
$\displaystyle\lim_{\lambda\rightarrow \lambda_{1}}u_{\lambda}=0$.

\item[(ii)] If $\kappa={0}$ and $\varrho\nu<0$
\text{(resp., $\varrho\nu>0$)}, then there exist a constant $\lambda_{*}>\lambda_{1}$ \text{(resp., $\lambda_{*}<\lambda_{1}$)} and two continuously differentiable mappings $\lambda\mapsto{\vartheta_{\lambda}^{\pm}}$ from $[\lambda_{1},\lambda_{*}]$ to $\mathbb{R}$
\text{(resp., from $[\lambda_{*},\lambda_{1}]$ to $\mathbb{R}$)} such that
(\ref{eq4}) has two nontrivial solutions $u_{\lambda}^{\pm}=\vartheta_{\lambda}^{\pm}\varphi_{1}+h(\vartheta_{\lambda}^{\pm}\varphi_{1},\lambda)$, which exists for $\lambda\in(\lambda_{1},\lambda_{*}]$ \text{(resp., $[\lambda_{*},\lambda_{1})$)} and satisfies
$\displaystyle\lim_{\lambda\rightarrow \lambda_{1}}u_{\lambda}^{\pm}=0$.

\end{itemize}
\end{theorem}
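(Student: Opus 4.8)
The plan is to reduce the whole statement to an analysis of the scalar bifurcation equation $G(\vartheta,\lambda)=0$ already produced by the Lyapunov--Schmidt reduction in (\ref{eq19}). Since that reduction sets up a one-to-one correspondence between solutions of $\mathcal{T}(u_0,\lambda)=0$ near $(0,\lambda_1)$ and solutions of $T(u,\lambda)=0$, it suffices to locate the nontrivial zeros of $G$ near $(\vartheta,\lambda)=(0,\lambda_1)$ and then transport them back to $u_\lambda=\vartheta_\lambda\varphi_1+h(\vartheta_\lambda\varphi_1,\lambda)$. First I would factor the trivial zero out of (\ref{eq19}), writing $G(\vartheta,\lambda)=\vartheta\,\widehat{G}(\vartheta,\lambda)$ with
\[
\widehat{G}(\vartheta,\lambda)=\varrho(\lambda-\lambda_1)+\frac{\kappa}{2}\vartheta+\frac{\nu}{6}\vartheta^{2}+o\big(\vartheta^{2},\lambda-\lambda_1\big),
\]
a smooth function whose zeros are precisely the nontrivial zeros of $G$. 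Note that $\widehat{G}(0,\lambda_1)=0$ and, because $\lambda_1$ is the \emph{principal} eigenvalue, the variational characterization in Lemma \ref{le2} forces $\partial_\lambda\widehat{G}(0,\lambda_1)=\varrho>0$; in particular $\varrho\neq 0$ throughout.

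For part (i), where $\kappa\neq 0$, I would compute $\partial_\vartheta\widehat{G}(0,\lambda_1)=\kappa/2\neq 0$ and apply the implicit function theorem to solve $\widehat{G}(\vartheta,\lambda)=0$ for $\vartheta=\vartheta_\lambda$ as a $C^1$ function of $\lambda$ on an interval $(\lambda_1-\varepsilon,\lambda_1+\varepsilon)$ with $\vartheta_{\lambda_1}=0$. Implicit differentiation at $\lambda_1$ gives $\vartheta_\lambda'(\lambda_1)=-\partial_\lambda\widehat{G}/\partial_\vartheta\widehat{G}=-2\varrho/\kappa$, reproducing the stated leading order $\vartheta_\lambda=2\varrho(\lambda_1-\lambda)/\kappa+o(|\lambda-\lambda_1|)$. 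Since $\varrho\neq0$, the slope is nonzero, so $\vartheta_\lambda\neq0$ for $\lambda\neq\lambda_1$ and the branch is genuinely nontrivial on both sides of $\lambda_1$, with $\vartheta_\lambda\to0$ and hence (by $h(0,\lambda)=0$ and continuity of $h$) $u_\lambda\to0$.

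For part (ii), where $\kappa=0$, this step fails because $\partial_\vartheta\widehat{G}(0,\lambda_1)=0$; here I would instead use $\partial_\lambda\widehat{G}(0,\lambda_1)=\varrho\neq0$ and apply the implicit function theorem in the other variable, solving $\widehat{G}=0$ for $\lambda=\lambda(\vartheta)$ with $\lambda(0)=\lambda_1$. Two rounds of implicit differentiation, using $\kappa=0$, yield $\lambda'(0)=0$ and $\lambda''(0)=-\nu/(3\varrho)$, so $\lambda(\vartheta)=\lambda_1-\frac{\nu}{6\varrho}\vartheta^{2}+o(\vartheta^{2})$. The sign of $-\nu/(6\varrho)$, i.e.\ of $\varrho\nu$, then dictates the direction of the fold: if $\varrho\nu<0$ the parabola opens toward $\lambda>\lambda_1$, and since $\lambda'(\vartheta)=-\frac{\nu}{3\varrho}\vartheta+o(\vartheta)$ is strictly monotone on each of the half-intervals $\vartheta>0$ and $\vartheta<0$, inverting separately produces two branches $\vartheta_\lambda^{\pm}$ defined for $\lambda\in(\lambda_1,\lambda_*]$ with $\vartheta_\lambda^{\pm}\to0$; the case $\varrho\nu>0$ is symmetric with the roles of $\lambda>\lambda_1$ and $\lambda<\lambda_1$ interchanged. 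Setting $u_\lambda^{\pm}=\vartheta_\lambda^{\pm}\varphi_1+h(\vartheta_\lambda^{\pm}\varphi_1,\lambda)$ and arguing as before gives $\lim_{\lambda\to\lambda_1}u_\lambda^{\pm}=0$.

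The main obstacle is exactly this degeneracy in part (ii): since $\kappa=0$ kills $\partial_\vartheta\widehat{G}$, one cannot solve for $\vartheta$ and must swap the roles of the two variables, after which the square-root (fold) structure of the solution set requires a careful, sign-tracked inversion on each half-branch $\vartheta\gtrless0$. The delicate point is verifying that the $o(\vartheta^{2})$ remainder cannot overturn the sign of $\lambda(\vartheta)-\lambda_1$ for small $\vartheta\neq0$, so that the count of \emph{exactly} two nontrivial zeros on one side of $\lambda_1$ and none on the other is rigorous; this is secured by $\nu\neq0$ together with $\varrho>0$, which guarantee that the quadratic term in $\lambda(\vartheta)$ is genuinely dominant near the origin.
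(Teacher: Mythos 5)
Your proposal is correct and follows essentially the same route as the paper: the paper likewise analyzes the reduced bifurcation function $G(\vartheta,\lambda)$ from (\ref{eq19}), applying the implicit function theorem directly when $\kappa\neq 0$ and invoking a saddle-node (fold) structure of the nontrivial zeros when $\kappa=0$, $\nu\neq 0$. Your treatment merely makes explicit what the paper leaves terse — the variable-swap implicit function theorem argument giving $\lambda(\vartheta)=\lambda_{1}-\frac{\nu}{6\varrho}\vartheta^{2}+o(\vartheta^{2})$ behind the paper's one-line saddle-node claim, and the verification (via the eigenvalue equation for $\varphi_{1}$) that $\varrho>0$, a nondegeneracy the paper uses tacitly.
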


\begin{remark}\label{remark1}
From the above discussions and Theorem \ref{th2.1} we note that as for $(i)$ of Theorem \ref{th2.1}, (\ref{eq4}) has a positive nontrivial solution when $\varrho\kappa>0$
\text{$($resp., $\varrho\kappa<0$$)$} for $\lambda\in(\lambda_{1}-\varepsilon,\lambda_{1})$
\text{$($resp., $\lambda\in(\lambda_{1},\lambda_{1}+\varepsilon)$$)$};
$(ii)$ of Theorem \ref{th2.1} may have a negative solution bifurcating from the line of $\Gamma_{0}$ if $\kappa={0}$ and $\varrho\nu<0$
\text{$($resp., $\varrho\nu>0$$)$}, while there is no biological significance for the negative solution.
Throughout the following, we only consider the positive solution, and denote the positive solutions in Theorem \ref{th2.1} by $\lambda\in\Upsilon$.
\end{remark}

\subsection{Local bifurcation from $\Gamma_{u_{1}}$}

~\quad In the subsection, we investigate the bifurcation on $\Gamma_{u_{1}}=\big\{(u_{1},0): u_{1}\in\mathbb{R_{+}}\big\}$.
Note that $T(u_{1},0)\equiv0$ for any constant $u_{1}\in\mathbb{R_{+}}$. From (\ref{eq9}), we obtain
\begin{equation}\label{eq21}
T_{u}(u_{1},0)[v]:=D_{u}T(u_{1},0)[v]=
\left(\begin{array}{c}
   (1+du_{1})\triangle{v}\\
   \partial_{\overrightarrow{n}}v
\end{array}\right),
\end{equation}
which implies that $\mathcal{N}(T_{u}(u_{1},0))=\text{span}\{1\}$.

It is easy to see that codim$\mathcal{R}(T_{u}(u_{1},0))=1$.
In fact, let $(\widetilde{y}_{1},\widetilde{y}_{2})^{T}\in\mathcal{R}(\mathcal{L}_{\lambda_{1}})$ and $\widetilde{v}\in\mathcal{X}$ satisfy
\begin{equation}\label{eq22}
 \begin{cases}
    (1+du_{1})\triangle{\widetilde{v}}=\widetilde{y}_{1},                             &\text{$x \in \Omega$},\\
     \partial_{\overrightarrow{n}}\widetilde{v}=\widetilde{y}_{2},    &\text{$x \in \partial{\Omega}$},
  \end{cases}
\end{equation}
then we have
\begin{eqnarray}\label{eq23}
\begin{split}
\int_{\Omega}\widetilde{y}_{1}dx
=&(1+du_{1})\int_{\Omega}\triangle{\widetilde{v}}dx=(1+du_{1})\int_{\partial{\Omega}}\frac{\partial{\widetilde{v}}}{\partial{\overrightarrow{n}}}dS
=(1+du_{1})\int_{\partial{\Omega}}\widetilde{y}_{2}dS.
\end{split}
\end{eqnarray}
Hence this shows that $(\widetilde{y}_{1},\widetilde{y}_{2})^{T}\in\mathcal{R}(T_{u}(u_{1},0))$ if and only if
\begin{eqnarray}\label{eq24}
\int_{\Omega}\widetilde{y}_{1}(x)dx-(1+du_{1})\int_{\partial{\Omega}}\widetilde{y}_{2}(x)dS=0.
\end{eqnarray}
Therefore, codim$\mathcal{R}(T_{u}(u_{1},0))=1$ and $T_{u}(u_{1},0)$ is a Fredholm operator with index zero.
Thus, we have the decompositions: $\mathcal{X}=\mathcal{N}(T_{u}(u_{1},0))\oplus{\mathcal{X}_{2}}$ and  $\mathcal{Y}=\mathcal{R}(T_{u}(u_{1},0))\oplus{\mathcal{Y}_{2}}$. In the sequel, we denote $\bar{l}\in\mathcal{Y}^{*}$ such that
$\mathcal{R}(T_{u}(u_{1},0))=\big\{\widetilde{y}\in{\mathcal{Y}}, \left \langle {\bar{l},\widetilde{y}} \right \rangle_{2}:=\int_{\Omega}\widetilde{y}_{1}(x)dx-(1+du_{1})\int_{\partial{\Omega}}\widetilde{y}_{2}(x)dS=0\big\}$.
And it follows from Lemma \ref{le1} and (\cite{2007shi-JFA} Theorem 2.1) that we have the following results.

\begin{theorem}\label{th2.2}
Suppose that $(A0)$ is satisfied.
\begin{itemize}
\item[(i)] If $(A1)$ for any constant $u_{1}\in\mathbb{R_{+}}$ is not satisfied, then no steady-state bifurcation occurs in the vicinity of
$(u,\lambda)=(u_{1},0)$, which implies that the steady-state solution set of (\ref{eq4}) near $(u_{1},0)$ consists precisely of the trivial curve $\Gamma_{u_{1}}$.

\item[(ii)] If $(A1)$ and $(A2)$ for any constant $u_{1}\in\mathbb{R_{+}}$ are satisfied,
\begin{eqnarray*}
(A2) ~~d\int_{\Omega}\triangle{\psi_{*}}dx+\int_{\Omega}f(x,u_{1})dx+u_{1}\int_{\Omega}f_{u}(x,u_{1})dx
+(1+du_{1})g_{u}(u_{1})\int_{\partial{\Omega}}r(x)dS\neq0,~~~~~~~~~~~~~~~~~~~~~~~~~
\end{eqnarray*}
where $\psi_{*}$ is the unique solution of
\begin{eqnarray}\label{eq25}
\begin{cases}
     (1+du_{1})\triangle{\widetilde{v}}+u_{1}f(x,u_{1})=0,                             &\text{$x \in \Omega$},\\
     \partial_{\overrightarrow{n}}\widetilde{v}=r(x)g(u_{1}),    &\text{$x \in \partial{\Omega}$},   \\
      \int_{\Omega}\widetilde{v}(x)dx=0,
\end{cases}
\end{eqnarray}
then the set of solutions of (\ref{eq4}) near $(u_{1},0)$ consists explicitly of the curves $\Gamma_{u_{1}}$ and $\Gamma_{\Xi}$, where
\begin{eqnarray*}
\Gamma_{\Xi}=\big\{(u_{01}(s),\lambda_{01}(s)):s\in(-\varepsilon_{1}, \varepsilon_{1}),  \varepsilon_{1}\in\mathbb{R_{+}}\big\},
\end{eqnarray*}
and $u_{01}(s)=u_{1}+\eta_{1}s+s\zeta_{1}(s)$, $\lambda_{01}(s)=s+s\xi_{1}(s)$ are $C^{1}$ functions such that
$\zeta_{1}(0)=\zeta_{1}^{'}(0)=\xi_{1}(0)=\xi_{1}^{'}(0)=0$, and $\eta_{1}=-\frac{\zeta_{*}}{\xi_{*}}$ with
\begin{small}
\begin{eqnarray*}
\begin{split}
\zeta_{*}=&d\int_{\Omega}\nabla\cdot({\psi_{*}}\nabla{\psi_{*}})dx+\int_{\Omega}f(x,u_{1}){\psi_{*}}dx+u_{1}\int_{\Omega}f_{u}(x,u_{1}){\psi_{*}}dx
+(1+du_{1})g_{u}(u_{1})\int_{\partial{\Omega}}r(x){\psi_{*}}dS,  \\
\xi_{*}=&d\int_{\Omega}\triangle{\psi_{*}}dx+\int_{\Omega}f(x,u_{1})dx+u_{1}\int_{\Omega}f_{u}(x,u_{1})dx
+(1+du_{1})g_{u}(u_{1})\int_{\partial{\Omega}}r(x)dS.
\end{split}
\end{eqnarray*}
\end{small}

\end{itemize}
\end{theorem}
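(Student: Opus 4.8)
The plan is to apply the abstract transcritical-type bifurcation theorem of \cite{2007shi-JFA} to the $C^{2}$ map $T$ at the degenerate point $(u_{1},0)$, for which the Fredholm structure has already been prepared: $T(u_{1},0)=0$, the operator $T_{u}(u_{1},0)$ is Fredholm of index zero, $\mathcal{N}(T_{u}(u_{1},0))=\mathrm{span}\{1\}$, and $\mathcal{R}(T_{u}(u_{1},0))$ is the codimension-one subspace cut out by the functional $\bar{l}$ in (\ref{eq24}). The whole dichotomy between $(i)$ and $(ii)$ is governed by whether the $\lambda$-derivative lies in this range. Computing $T_{\lambda}(u_{1},0)=\big(u_{1}f(x,u_{1}),\,-r(x)g(u_{1})\big)^{T}$ and testing it against $\bar{l}$ via (\ref{eq24}) produces exactly $u_{1}\int_{\Omega}f(x,u_{1})dx+(1+du_{1})g(u_{1})\int_{\partial\Omega}r(x)dS$, so $T_{\lambda}(u_{1},0)\in\mathcal{R}(T_{u}(u_{1},0))$ if and only if $(A1)$ holds. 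Thus $(A1)$ is precisely the degeneracy (transversality-failure) condition separating the two regimes.

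For $(i)$, I would argue that when $(A1)$ fails we are in the nondegenerate regime $T_{\lambda}(u_{1},0)\notin\mathcal{R}(T_{u}(u_{1},0))$. Then the full linearization $\mathcal{A}:=D_{(u,\lambda)}T(u_{1},0)$, acting by $\mathcal{A}(v,\mu)=T_{u}(u_{1},0)v+\mu T_{\lambda}(u_{1},0)$, is surjective onto $\mathcal{Y}$ (since $\mathcal{R}(T_{u}(u_{1},0))$ has codimension one and $T_{\lambda}(u_{1},0)$ is missing from it) with one-dimensional kernel $\mathrm{span}\{(1,0)\}$. The implicit function theorem then exhibits the zero set of $T$ near $(u_{1},0)$ as a single $C^{1}$ curve tangent to $\mathrm{span}\{(1,0)\}$; since $\Gamma_{u_{1}}$ already lies in this zero set with tangent $(1,0)$ at $(u_{1},0)$, the local solution set must coincide with $\Gamma_{u_{1}}$, and no bifurcation occurs.

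For $(ii)$, assume $(A1)$, so $T_{\lambda}(u_{1},0)\in\mathcal{R}(T_{u}(u_{1},0))$ and problem (\ref{eq25}) is solvable; the side condition $\int_{\Omega}\widetilde{v}\,dx=0$ selects the unique $\psi_{*}\in\mathcal{X}_{2}$ with $T_{u}(u_{1},0)\psi_{*}=-T_{\lambda}(u_{1},0)$. I would then run a Lyapunov--Schmidt reduction parallel to Section 2.1: write $u=u_{1}+s\cdot 1+w$ with $w\in\mathcal{X}_{2}$, solve the complementary equation $(I-Q)T=0$ for $w=w(s,\lambda)$ by the implicit function theorem (where $Q$ projects $\mathcal{Y}$ onto $\mathcal{Y}_{2}$ along $\mathcal{R}(T_{u}(u_{1},0))$), and form the scalar bifurcation function $B(s,\lambda):=\langle\bar{l},T(u_{1}+s\cdot 1+w(s,\lambda),\lambda)\rangle_{2}$. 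Because every constant solves (\ref{eq4}) at $\lambda=0$, one has $w(s,0)\equiv 0$ and $B(s,0)\equiv 0$, whence $B(s,\lambda)=\lambda\,\widetilde{B}(s,\lambda)$: the factor $\lambda=0$ recovers $\Gamma_{u_{1}}$, while $\widetilde{B}=0$ yields $\Gamma_{\Xi}$. Differentiating the reduction gives $w_{\lambda}(0,0)=\psi_{*}$, and computing the leading coefficients shows $\widetilde{B}(s,\lambda)=\xi_{*}s+\zeta_{*}\lambda+\text{h.o.t.}$ with $\xi_{*}=\partial_{s}\partial_{\lambda}B(0,0)=\langle\bar{l},\,T_{uu}(u_{1},0)[1,\psi_{*}]+T_{\lambda u}(u_{1},0)[1]\rangle_{2}$ and $\zeta_{*}=\tfrac12\partial_{\lambda}^{2}B(0,0)=\tfrac12\langle\bar{l},\,T_{uu}(u_{1},0)[\psi_{*},\psi_{*}]+2T_{\lambda u}(u_{1},0)[\psi_{*}]\rangle_{2}$. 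Using $T_{uu}(u_{1},0)[v,w]=\big(d\nabla\cdot(v\nabla w)+d\nabla\cdot(w\nabla v),\,0\big)^{T}$ together with $T_{\lambda u}(u_{1},0)[v]=\big(vf(x,u_{1})+u_{1}f_{u}(x,u_{1})v,\,-r(x)g_{u}(u_{1})v\big)^{T}$ and Green's formula reproduces precisely the stated integrals for $\xi_{*}$ and $\zeta_{*}$ (in particular $d\int_{\Omega}\triangle\psi_{*}\,dx$ comes from $T_{uu}(u_{1},0)[1,\psi_{*}]$). Condition $(A2)$ is exactly $\xi_{*}\neq 0$; since the quadratic part of $B$ on the two-dimensional kernel $\mathrm{span}\{(1,0),(\psi_{*},1)\}$ equals $\xi_{*}s\lambda+\zeta_{*}\lambda^{2}$, an indefinite form of determinant $-\xi_{*}^{2}/4\neq 0$, this is the nondegeneracy hypothesis of \cite{2007shi-JFA} forcing two transversally crossing branches. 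Solving $\widetilde{B}=0$ by the implicit function theorem then delivers the $C^{1}$ curve $\Gamma_{\Xi}$ through $(u_{1},0)$, transverse to $\Gamma_{u_{1}}$, with tangent $(\eta_{1}\cdot 1+\psi_{*},\,1)$ and kernel-direction slope $\eta_{1}=-\zeta_{*}/\xi_{*}$.

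The main obstacle I expect is the second-order bookkeeping in this degenerate reduction: correctly identifying $w_{\lambda}(0,0)=\psi_{*}$, propagating it through the mixed and pure $\lambda$-derivatives of $B$, and converting the abstract pairings $\langle\bar{l},\cdot\rangle_{2}$ into the interior and boundary integrals defining $\xi_{*}$ and $\zeta_{*}$ via the divergence theorem. The remaining delicate point is to confirm that $(A2)$ is genuinely the nondegeneracy condition of the cited theorem rather than an artifact of the chosen parametrization, namely that $\xi_{*}\neq 0$ is exactly what renders the $2\times 2$ quadratic form indefinite and hence pins down the transcritical picture $\Gamma_{u_{1}}\cup\Gamma_{\Xi}$ with the asserted $C^{1}$ regularity of the remainder functions $\zeta_{1}$ and $\xi_{1}$.
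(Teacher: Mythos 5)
Your proposal is correct, and its skeleton coincides with the paper's: the functional $\bar{l}$ cutting out $\mathcal{R}(T_{u}(u_{1},0))$ as in (\ref{eq24}), the identification of $(A1)$ with $T_{\lambda}(u_{1},0)\in\mathcal{R}(T_{u}(u_{1},0))$ (your part $(i)$ is just a more explicit spelling-out of the paper's terse ``Lemma \ref{le1} plus implicit function theorem'' argument, via surjectivity of the full derivative $(v,\mu)\mapsto T_{u}(u_{1},0)v+\mu T_{\lambda}(u_{1},0)$), the function $\psi_{*}$ of (\ref{eq25}) as the normalized solution of $T_{u}(u_{1},0)[\widetilde{v}]=-T_{\lambda}(u_{1},0)$, and $(A2)$ as the indefiniteness condition. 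Where you genuinely diverge is the execution of part $(ii)$: the paper assembles the Hessian matrix $G_{0}$ of (\ref{eq27}), uses $\left\langle\bar{l},T_{\lambda\lambda}(u_{1},0)\right\rangle_{2}=\left\langle\bar{l},T_{uu}(u_{1},0)[1]^{2}\right\rangle_{2}=0$ to get $\det(G_{0})=-\xi_{*}^{2}<0$, and then cites Theorem 2.1 of \cite{2007shi-JFA} wholesale, reading the two crossing curves off the branching quadratic $2\zeta_{*}\widetilde{\eta}^{2}+2\xi_{*}\widetilde{\eta}\eta=0$ and only afterwards recognizing the $(\widetilde{\eta},\eta)=(0,1)$ curve as $\Gamma_{u_{1}}$. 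You instead re-derive the crossing picture by a hands-on Lyapunov--Schmidt reduction that exploits the special structure of this problem --- every constant solves (\ref{eq4}) at $\lambda=0$, so $w(s,0)\equiv0$ and $B(s,0)\equiv0$ --- giving the factorization $B=\lambda\widetilde{B}$, which replaces the abstract two-curve theorem by a single regular implicit-function-theorem step on $\widetilde{B}$ (with $\partial_{s}\widetilde{B}(0,0)=\xi_{*}\neq0$ being exactly $(A2)$) and builds the identification of one branch with $\Gamma_{u_{1}}$ in from the start; this is more elementary and self-contained, at the cost of the second-order bookkeeping you flag, which does check out ($w_{\lambda}(0,0)=\psi_{*}$; your $\xi_{*}$ and $\zeta_{*}$ agree with the entries of $G_{0}$ up to an overall factor $2$, and your reduced form has determinant $-\xi_{*}^{2}/4$, same sign as $\det(G_{0})$). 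Two small points: your expansion of $\widetilde{B}$ tacitly uses $\widetilde{B}(0,0)=B_{\lambda}(0,0)=\left\langle\bar{l},T_{\lambda}(u_{1},0)\right\rangle_{2}=0$, which is precisely $(A1)$ and deserves a sentence; and your tangent $(\eta_{1}\cdot 1+\psi_{*},\,1)$ for $\Gamma_{\Xi}$ is the form actually delivered by \cite{2007shi-JFA}, whereas the paper's parametrization $u_{01}(s)=u_{1}+\eta_{1}s+s\zeta_{1}(s)$ with $\zeta_{1}'(0)=0$ suppresses the $s\psi_{*}$ component, so on this detail your bookkeeping is the more careful of the two.
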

\begin{proof}
Take the partial derivative of $T(u,\lambda)$ with respect to $\lambda$, evaluated at $(u_{1},0)$, which is given by
\begin{equation}\label{eq26}
T_{\lambda}(u_{1},0)=
\left(\begin{array}{c}
   u_{1}f(x,u_{1})\\
   -r(x)g(u_{1})
\end{array}\right).
\end{equation}

$(i)$  If $(A1)$ for any constant $u_{1}\in\mathbb{R_{+}}$ is not satisfied,
then it follows from (\ref{eq26}) that $T_{\lambda}(u_{1},0)\notin\mathcal{R}(T_{u}(u_{1},0))$.
It follows from Lemma \ref{le1} and the implicit function theorem that $(u_{1},0)$ with $u_{1}\in\mathbb{R_{+}}$ is not a bifurcation point of (\ref{eq4})
regarding the trivial branch $\Gamma_{u_{1}}$.

$(ii)$  If $(A1)$ for $u_{1}\in\mathbb{R_{+}}$ holds, then we get $T_{\lambda}(u_{1},0)\in\mathcal{R}(T_{u}(u_{1},0))$ from (\ref{eq26}).
Applying Theorem 2.1 in \cite{2007shi-JFA} to $T(u,\lambda)=0$, we define ${\mathcal{X}_{2}}:=\big\{\widetilde{v}\in{\mathcal{X}}, \int_{\Omega}\widetilde{v}(x)dx=0\big\}$, and
obtain the unique solution $\psi_{*}$ of
$T_{\lambda}(u_{1},0)+T_{u}(u_{1},0)[\widetilde{v}]=0$ for $\widetilde{v}\in{\mathcal{X}_{2}}$, then $\psi_{*}$ satisfies  (\ref{eq25}).

Then there exists the Hessian matrix $G_{0}$ evaluated at $(u,\lambda)=(u_{1},0)$,
\begin{equation}\label{eq27}
G_{0}= {\left(\begin{array}{cc}
   \left \langle {\bar{l},2T_{\lambda{u}}(u_{1},0)[\psi_{*}]+T_{uu}(u_{1},0)[\psi_{*}]^{2}} \right \rangle_{2} &
   \left \langle {\bar{l},T_{\lambda{u}}(u_{1},0)[1]+T_{uu}(u_{1},0)[1,\psi_{*}]} \right \rangle_{2} \\
   \left \langle {\bar{l},T_{\lambda{u}}(u_{1},0)[1]+T_{uu}(u_{1},0)[1,\psi_{*}]} \right \rangle_{2}       & 0
    \end{array}\right)},
\end{equation}
where
\begin{eqnarray*}
\begin{split}
& \left \langle {\bar{l},2T_{\lambda{u}}(u_{1},0)[\psi_{*}]} \right \rangle_{2}
=2\bigg[\int_{\Omega}f(x,u_{1}){\psi_{*}}dx+u_{1}\int_{\Omega}f_{u}(x,u_{1}){\psi_{*}}dx
+(1+du_{1})g_{u}(u_{1})\int_{\partial{\Omega}}r(x){\psi_{*}}dS\bigg],  \\
& \left \langle {\bar{l},T_{uu}(u_{1},0)[\psi_{*}]^{2}} \right \rangle_{2}
=2d\int_{\Omega}\nabla\cdot({\psi_{*}}\nabla{\psi_{*}})dx=2dg(u_{1})\int_{\partial{\Omega}}r(x){\psi_{*}}dS,  \\
& \left \langle {\bar{l},T_{\lambda{u}}(u_{1},0)[1]} \right \rangle_{2}
=\int_{\Omega}f(x,u_{1})dx+u_{1}\int_{\Omega}f_{u}(x,u_{1})dx
+(1+du_{1})g_{u}(u_{1})\int_{\partial{\Omega}}r(x)dS,  \\
& \left \langle {\bar{l},T_{uu}(u_{1},0)[1,\psi_{*}]} \right \rangle_{2}
=d\int_{\Omega}\triangle{\psi_{*}}dx=dg(u_{1})\int_{\partial{\Omega}}r(x)dS,
\end{split}
\end{eqnarray*}
by using that $\left \langle {\bar{l},T_{\lambda{\lambda}}(u_{1},0)} \right \rangle_{2}=\left \langle {\bar{l},T_{uu}(u_{1},0)[1]^{2}} \right \rangle_{2}=0$.
Then we get the following result from $(A2)$
\begin{eqnarray*}
\det(G_{0})=-\bigg[d\int_{\Omega}\triangle{\psi_{*}}dx+\int_{\Omega}f(x,u_{1})dx+u_{1}\int_{\Omega}f_{u}(x,u_{1})dx
+(1+du_{1})g_{u}(u_{1})\int_{\partial{\Omega}}r(x)dS\bigg]^{2}<0.
\end{eqnarray*}

Hence it follows from Theorem 2.1 in \cite{2007shi-JFA} that the solution set of $T(u,\lambda)=0$ near $(u,\lambda)=(u_{1},0)$
is the union of two intersecting $C^{1}$ curves, which  are in form of
$(u_{0i}(s),\lambda_{0i}(s))=(u_{1}+\eta_{i}s+s\zeta_{i}(s),\widetilde{\eta}_{i}s+s\xi_{i}(s)), s\in(-\varepsilon_{1}, \varepsilon_{1}),  \varepsilon_{1}\in\mathbb{R_{+}}, \zeta_{i}(s)\in{\mathcal{X}_{2}}, i=1,2$, where $(\widetilde{\eta}_{1},\eta_{1})$ and $(\widetilde{\eta}_{2},\eta_{2})$ are non-zero linear independent solutions
of the following equation
\begin{eqnarray*}
\left \langle {\bar{l},2T_{\lambda{u}}(u_{1},0)[\psi_{*}]+T_{uu}(u_{1},0)[\psi_{*}]^{2}} \right \rangle_{2}
\widetilde{\eta}^{2}
+2\left \langle {\bar{l},T_{\lambda{u}}(u_{1},0)[1]+T_{uu}(u_{1},0)[1,\psi_{*}]} \right \rangle_{2}
\widetilde{\eta}\eta=0,
\end{eqnarray*}
and it is shown that $(\widetilde{\eta}_{1},\eta_{1})=(1,\eta_{1})$ and $(\widetilde{\eta}_{2},\eta_{2})=(0,1)$ with $\eta_{1}=-\frac{\zeta_{*}}{\xi_{*}}$, $\zeta_{1}(0)=\zeta_{1}^{'}(0)=\xi_{1}(0)=\xi_{1}^{'}(0)=0$.
Thus, the solution set of (\ref{eq4}) near $(u_{1},0)$ consists precisely of the curves
$(u_{01}(s),\lambda_{01}(s))=(u_{1}+\eta_{1}s+s\zeta_{1}(s),s+s\xi_{1}(s))$ and $(u_{02}(s),\lambda_{02}(s))=(u_{1}+s+s\zeta_{2}(s),s\xi_{2}(s))$,
while the solution curve $(u_{02}(s),\lambda_{02}(s))$ is identical to the trivial curve $\Gamma_{u_{1}}$.
\end{proof}

\section{Stability of steady state solutions in Theorem \ref{th2.1}}
Theorem \ref{th2.1} states that there exists an open set $\Upsilon\subseteq{\mathbb{R}}^{+}$ with $\lambda_{1}$ on its boundary such that model (\ref{eq3}) with $\lambda\in{\Upsilon}$ has a spatially nonhomogeneous positive steady state solution $u_{\lambda}^{*}$. It follows from the previous discussions that $u_{\lambda}^{*}$ takes the form $u_{\lambda}^{*}=\vartheta_{\lambda}^{*}\varphi_{1}+h(\vartheta_{\lambda}^{*}\varphi_{1},\lambda)$, where $\vartheta=\vartheta_{\lambda}^{*}$ is a solution to $G(\vartheta,\lambda)=0$ given in Section 2.1. Obviously, we have $\vartheta_{\lambda}^{*}(\lambda_{1})=0$ and
$\displaystyle\lim_{\lambda\rightarrow \lambda_{1}}\frac{u_{\lambda}^{*}}{\vartheta_{\lambda}^{*}}=\varphi_{1}$. In the following, we will study the stability of the bifurcated steady state solution $u_{\lambda}^{*}$, when $\Omega$ is a bounded open set in $\mathbb{R}$. To investigate the local dynamical behavior of model (\ref{eq3}) near $u=u_{\lambda}^{*}$, the linearized equation of (\ref{eq3}) at $u_{\lambda}^{*}$ is given by
\begin{equation}\label{eq28}
 \begin{cases}
    \frac{\partial{v}}{\partial{t}}=\triangle{v}+d\nabla\cdot({v}\nabla{u_{\lambda}^{*}})+d\nabla\cdot({u_{\lambda}^{*}}\nabla{v_{\sigma}})
    +{\lambda}u_{\lambda}^{*}f_{u}(x,u_{\lambda}^{*})v+{\lambda}f(x,u_{\lambda}^{*})v,     &\text{$x \in \Omega, t >0 $},\\
     \partial_{\overrightarrow{n}}v={\lambda}r(x)g_{u}(u_{\lambda}^{*})v,                                    &\text{$x \in \partial{\Omega}, t >0 $},
  \end{cases}
\end{equation}
where $v=v(x,t)$ and $v_{\sigma}=v(x,t-\sigma)$.
From \cite{1996-wujianhong-pfde,1983-A.Pazy,2020-DCDSA-anqi-memory-nonlocal,2021-JDDE-wangchuncheng-memory},
we are looking for $\mu\in{\mathbb{C}}$ and $\psi\in\mathcal{X}_{\mathbb{C}}\setminus{\{0\}}$ with any $\lambda\in\Upsilon$ and $\sigma\in{\mathbb{R}}\setminus{\mathbb{R_{-}}}$, such that
\begin{eqnarray}\label{eq29}
\triangle(\lambda,\mu,\sigma)\psi(x)=0,          &\text{$x \in \Omega$},
\end{eqnarray}
where
\begin{eqnarray*}
\triangle(\lambda,\mu,\sigma)\psi=
   \triangle{\psi}+d\nabla\cdot({\psi}\nabla{u_{\lambda}^{*}})+d\nabla\cdot({u_{\lambda}^{*}}\nabla{\psi})e^{-\mu\sigma}
    +{\lambda}u_{\lambda}^{*}f_{u}(x,u_{\lambda}^{*})\psi+{\lambda}f(x,u_{\lambda}^{*})\psi-\mu{\psi},
\end{eqnarray*}
and for all $\psi\in\mathcal{X}_{\mathbb{C}}\setminus{\{0\}}$ satisfying $\partial_{\overrightarrow{n}}\psi(x)={\lambda}r(x)g_{u}(u_{\lambda}^{*})\psi(x)$ on $x \in \partial{\Omega}$.

If any $(\lambda,\mu,\sigma,\psi)\in\Upsilon\times{\mathbb{C}}\times{{\mathbb{R}}\setminus{\mathbb{R_{-}}}}\times{\mathcal{X}_{\mathbb{C}}\setminus{\{0\}}}$ can solve (\ref{eq29}) and satisfy $\partial_{\overrightarrow{n}}\psi={\lambda}r(x)g_{u}(u_{\lambda}^{*})\psi$ on $\partial{\Omega}$, the complex number $\mu$ can be referred to as an eigenvalue of (\ref{eq28}) associated with eigenfunction $\psi$, then without loss of generality, we assume that $\|{\psi}\|_{\mathcal{Y}_{\mathbb{C}}}=1$ for simplicity. Now, we respectively give two prior estimates for the eigenfunction $\psi_{\lambda}$ and eigenvalue $\mu_{\lambda}$ before we discuss the distribution of the eigenvalue $\psi_{\lambda}$ of (\ref{eq28}) associated with the eigenfunction $\mu_{\lambda}$ for $\lambda\in\Upsilon$. Firstly, we have the following lemma on the estimate for the eigenfunction.

\begin{lemma}\label{le3}
Suppose that $(A0)$, $(A3)$, (\ref{eq12}) and (\ref{eq13}) are satisfied, where
\begin{eqnarray*}
(A3) ~~|d|<d_{*}:=\frac{1}{\displaystyle\max_{\lambda\in{\Upsilon}}\max_{x\in\bar{\Omega}}\big\{{u_{\lambda}^{*}(x)}\big\}},~~
~~~~~~~~~~~~~~~~~~~~~~~~~~~~~~~~~~~~~~~~~~~~~~~~~~~~~~~~~~~~~~~~~~~~~~~~~
\end{eqnarray*}
then there exists a constant $\mathbf{C}$, such that for any $(\lambda,\mu_{\lambda},\sigma_{\lambda},\psi_{\lambda})\in{\Upsilon}\times{\mathbb{C}}\times{\mathbb{R}}\setminus{\mathbb{R_{-}}}\times{\mathcal{X}_{\mathbb{C}}\setminus{\{0\}}}$ with $Re\{\mu_{\lambda}\}\geq 0$ solving (\ref{eq29}) and satisfying $\partial_{\overrightarrow{n}}\psi_{\lambda}={\lambda}r(x)g_{u}(u_{\lambda}^{*})\psi_{\lambda}$ on $\partial{\Omega}$, and
\begin{eqnarray*}
  \big\|{\nabla\psi_{\lambda}}\big\|_{\mathcal{Y}_{\mathbb{C}}}^{2}\leq{\mathbf{C}}.
  \end{eqnarray*}
\end{lemma}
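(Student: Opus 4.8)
The plan is to establish the bound by the classical energy method: multiply the identity $\triangle(\lambda,\mu_\lambda,\sigma_\lambda)\psi_\lambda=0$ from (\ref{eq29}) by the complex conjugate $\bar\psi_\lambda$, integrate over $\Omega$, apply Green's formula together with the boundary condition $\partial_{\overrightarrow{n}}\psi_\lambda=\lambda r(x)g_u(u_\lambda^*)\psi_\lambda$, and then take real parts so that the principal (gradient) term can be isolated and shown to be coercive. Throughout I would use that $\lambda\in\Upsilon$ stays in a bounded neighbourhood of $\lambda_1$ and that $\sup_{\lambda\in\Upsilon}\|u_\lambda^*\|_{L^\infty}<\infty$, together with the $L^\infty$-bounds in $(A0)$.

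First I would integrate by parts each of the three diffusion-type terms. The Laplacian term yields $-\int_\Omega|\nabla\psi_\lambda|^2\,dx$ plus the boundary contribution $\lambda\int_{\partial\Omega}r\,g_u(u_\lambda^*)|\psi_\lambda|^2\,dS$; the term $d\,e^{-\mu_\lambda\sigma_\lambda}\int_\Omega\bar\psi_\lambda\nabla\cdot(u_\lambda^*\nabla\psi_\lambda)\,dx$ yields $-d\,e^{-\mu_\lambda\sigma_\lambda}\int_\Omega u_\lambda^*|\nabla\psi_\lambda|^2\,dx$ plus a boundary term; and $d\int_\Omega\bar\psi_\lambda\nabla\cdot(\psi_\lambda\nabla u_\lambda^*)\,dx$ produces the cross term $-d\int_\Omega\psi_\lambda\nabla\bar\psi_\lambda\cdot\nabla u_\lambda^*\,dx$ plus a boundary term, where I would substitute $\partial_{\overrightarrow{n}}u_\lambda^*=\lambda r\,g(u_\lambda^*)$ from (\ref{eq4}). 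Taking real parts, the principal term becomes $\int_\Omega\big(1+d\,\mathrm{Re}(e^{-\mu_\lambda\sigma_\lambda})u_\lambda^*\big)|\nabla\psi_\lambda|^2\,dx$, while the $-\mu_\lambda\int_\Omega|\psi_\lambda|^2\,dx$ contribution becomes $-\mathrm{Re}(\mu_\lambda)\|\psi_\lambda\|_{L^2}^2\le0$, which is favourable.

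The crucial coercivity step is where $(A3)$ enters. Since $\mathrm{Re}(\mu_\lambda)\ge0$ and $\sigma_\lambda\ge0$, one has $|e^{-\mu_\lambda\sigma_\lambda}|=e^{-\mathrm{Re}(\mu_\lambda)\sigma_\lambda}\le1$, and therefore $1+d\,\mathrm{Re}(e^{-\mu_\lambda\sigma_\lambda})u_\lambda^*\ge 1-|d|\max_{\lambda,x}u_\lambda^*=:\delta>0$ precisely by $(A3)$. Hence the principal term dominates $\delta\|\nabla\psi_\lambda\|_{L^2}^2$, uniformly in $\Upsilon$ because the finiteness of $\max_{\lambda\in\Upsilon}\max_{x\in\bar\Omega}u_\lambda^*$ makes $\delta$ a fixed positive constant. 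It then remains to control the leftover terms: all boundary integrals have the form $\int_{\partial\Omega}(\cdots)|\psi_\lambda|^2\,dS$ with coefficients uniformly bounded in $\lambda$ by $(A0)$, and the remaining interior contributions are $\lambda\int_\Omega(u_\lambda^*f_u+f)|\psi_\lambda|^2\,dx$ and the cross term above.

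The main obstacle is absorbing the two terms that still carry $\nabla\psi_\lambda$ or a boundary trace into the principal term. For the cross term I would use Cauchy--Schwarz and Young's inequality, $|d|\,\|\nabla u_\lambda^*\|_{L^\infty}\|\psi_\lambda\|_{L^2}\|\nabla\psi_\lambda\|_{L^2}\le\epsilon\|\nabla\psi_\lambda\|_{L^2}^2+C_\epsilon\|\psi_\lambda\|_{L^2}^2$; for the boundary integrals I would invoke the interpolated trace inequality $\|\psi_\lambda\|_{L^2(\partial\Omega)}^2\le\epsilon\|\nabla\psi_\lambda\|_{L^2}^2+C_\epsilon\|\psi_\lambda\|_{L^2}^2$. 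Choosing $\epsilon$ small relative to $\delta$ absorbs both and leaves $(\delta-C\epsilon)\|\nabla\psi_\lambda\|_{L^2}^2\le C\|\psi_\lambda\|_{L^2}^2$. Finally, since $p>N$ gives $L^p(\Omega)\hookrightarrow L^2(\Omega)$ on the bounded domain, the normalization $\|\psi_\lambda\|_{\mathcal{Y}_\mathbb{C}}=1$ controls $\|\psi_\lambda\|_{L^2}$, yielding $\|\nabla\psi_\lambda\|_{L^2}^2\le\mathbf{C}$; the one point requiring care is upgrading this $L^2$ gradient estimate to the stated $\|\nabla\psi_\lambda\|_{\mathcal{Y}_\mathbb{C}}^2\le\mathbf{C}$, which in the present one-dimensional setting follows from the relevant norm equivalence (or a short elliptic-regularity step).
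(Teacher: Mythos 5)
Your proposal is correct and follows essentially the same route as the paper's proof: multiply (\ref{eq29}) by $\bar{\psi}_{\lambda}$, integrate over $\Omega$, apply Green's formula with the boundary conditions for $\psi_{\lambda}$ and $u_{\lambda}^{*}$, take real parts, use $Re\{\mu_{\lambda}\}\geq 0$ and $\sigma_{\lambda}\geq 0$ to get $\big|Re\{e^{-\mu_{\lambda}\sigma_{\lambda}}\}\big|\leq 1$ and to discard $-Re\{\mu_{\lambda}\}\leq 0$, and invoke $(A3)$ (together with the uniform $C^{2,\check{\alpha}}$ bounds on $u_{\lambda}^{*}$ from embedding and elliptic regularity) so that the term $|d|\max_{x\in\bar{\Omega}}\{u_{\lambda}^{*}\}\big\|\nabla\psi_{\lambda}\big\|^{2}$ is absorbed into the coercive gradient term. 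The only deviation is technical and harmless: where you control the cross term and the boundary integrals via Young's inequality and the interpolated trace inequality, absorbing $\epsilon\big\|\nabla\psi_{\lambda}\big\|^{2}$ into the margin $1-|d|\max u_{\lambda}^{*}>0$, the paper instead evaluates the real part of the cross term exactly through the symmetrization identity (\ref{eq30}), namely $Re\{\langle{\psi_{\lambda},\nabla\cdot({\psi_{\lambda}}\nabla{u_{\lambda}^{*}})}\rangle\}=\frac{1}{2}\int_{\partial{\Omega}}|\psi_{\lambda}|^{2}\,\partial_{\overrightarrow{n}}{u_{\lambda}^{*}}\,dS+\frac{1}{2}\int_{\Omega}|\psi_{\lambda}|^{2}\triangle{u_{\lambda}^{*}}\,dx$, which eliminates $\nabla\psi_{\lambda}$ from that term altogether, and then bounds every remaining integral directly by $L^{\infty}$ norms using the normalization $\|\psi_{\lambda}\|_{\mathcal{Y}_{\mathbb{C}}}=1$ (treated as an $L^{2}$-type norm, which also covers the boundary-trace and final-norm issues you flagged), so no $\epsilon$-absorption is needed.
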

\begin{proof}
According to the continuity of $\lambda\mapsto{\vartheta_{\lambda}}^{*}$, it follows that ${\vartheta_{\lambda}}^{*}\in\mathbb{R}$, $h(\vartheta_{\lambda}^{*}\varphi_{1},\lambda)\in{\mathcal{X}}_{1}$ and $u_{\lambda}^{*}\in{\mathcal{X}}$ are bounded for any $\lambda\in\Upsilon$. Note that $u_{\lambda}^{*}$ is a solution of (\ref{eq4}), by the embedding theorem \cite{2003-Sobolev Spaces(Adams)} and regularity theory for elliptic equations \cite{2001-Elliptic-PDE-Second Order}, we get $h(\vartheta_{\lambda}^{*}\varphi_{1},\lambda)\in{C^{2,\check{\alpha}}}, u_{\lambda}^{*}\in{C^{2,\check{\alpha}}}$, where $0<\check{\alpha}<\min\{\tilde{\alpha},\frac{1}{2}\}$.

Since
\begin{eqnarray*}
\begin{split}
\langle{\psi_{\lambda},\nabla\cdot({\psi_{\lambda}}\nabla{u_{\lambda}^{*}})}\rangle
&=\int_{\Omega}\bar{\psi}_{\lambda}\nabla\cdot({\psi_{\lambda}}\nabla{u_{\lambda}^{*}})dx
=\int_{\Omega}\bar{\psi}_{\lambda}\nabla{\psi_{\lambda}}\cdot\nabla{u_{\lambda}^{*}}dx+\int_{\Omega}|\psi_{\lambda}|^{2}\triangle{u_{\lambda}^{*}}dx  \\
&=\int_{\partial{\Omega}}|\psi_{\lambda}|^{2}\frac{\partial{u_{\lambda}^{*}}}{\partial{\overrightarrow{n}}}dS
-\int_{\Omega}\psi_{\lambda}\nabla\cdot({\bar{\psi}_{\lambda}}\nabla{u_{\lambda}^{*}})dx+\int_{\Omega}|\psi_{\lambda}|^{2}\triangle{u_{\lambda}^{*}}dx,
\end{split}
\end{eqnarray*}
we have
\begin{eqnarray}\label{eq30}
\begin{split}
Re\Big\{\langle{\psi_{\lambda},\nabla\cdot({\psi_{\lambda}}\nabla{u_{\lambda}^{*}})}\rangle\Big\}
=\frac{1}{2}\int_{\partial{\Omega}}|\psi_{\lambda}|^{2}\frac{\partial{u_{\lambda}^{*}}}{\partial{\overrightarrow{n}}}dS
+\frac{1}{2}\int_{\Omega}|\psi_{\lambda}|^{2}\triangle{u_{\lambda}^{*}}dx.
\end{split}
\end{eqnarray}

Multiplying two sides of $\triangle(\lambda,\mu_{\lambda},\sigma_{\lambda})\psi_{\lambda}=0$ by $\bar{\psi}_{\lambda}$, and integrating over $\Omega$,
we yield
\begin{eqnarray}\label{eq31}
\begin{split}
\big\|{\nabla{\psi_\lambda}}\big\|_{\mathcal{Y}_{\mathbb{C}}}^{2}=&Re\bigg\{\int_{\partial{\Omega}}\bar{\psi}_{\lambda}\frac{\partial{\psi_{\lambda}}}{\partial{\overrightarrow{n}}}dS\bigg\}
+dRe\Big\{\langle{\psi_{\lambda},\nabla\cdot({\psi_{\lambda}}\nabla{u_{\lambda}^{*}})}\rangle\Big\}
+dRe\Big\{\langle{\psi_{\lambda},\nabla\cdot({u_{\lambda}^{*}}\nabla{\psi_{\lambda}})}\rangle{e^{-\mu_{\lambda}{\sigma_{\lambda}}}}\Big\}\\
&+{\lambda}Re\Big\{\langle{\psi_{\lambda},{\psi_{\lambda}}[u_{\lambda}^{*}f_{u}(x,u_{\lambda}^{*})+f(x,u_{\lambda}^{*})]}\rangle\Big\}
-Re\Big\{\mu_{\lambda}\Big\},
\end{split}
\end{eqnarray}
then it follows from (\ref{eq29}) and (\ref{eq30}) with $Re\{\mu_{\lambda}\}\geq{0}$ that
\begin{eqnarray}\label{eq32}
\begin{split}
\big\|{\nabla{\psi_\lambda}}\big\|_{\mathcal{Y}_{\mathbb{C}}}^{2}=&\lambda\int_{\partial{\Omega}}|\psi_{\lambda}|^{2}r(x)g_{u}(u_{\lambda}^{*})dS+
\frac{d}{2}\int_{\partial{\Omega}}|\psi_{\lambda}|^{2}\frac{\partial{u_{\lambda}^{*}}}{\partial{\overrightarrow{n}}}dS
+\frac{d}{2}\int_{\Omega}|\psi_{\lambda}|^{2}\triangle{u_{\lambda}^{*}}dx\\
&-d\int_{\Omega}{u_{\lambda}^{*}}|{\nabla{\psi_\lambda}}|^{2}dx{Re}\Big\{{e^{-\mu_{\lambda}{\sigma_{\lambda}}}}\Big\}
+d\lambda\int_{\partial{\Omega}}|\psi_{\lambda}|^{2}r(x)g_{u}(u_{\lambda}^{*})u_{\lambda}^{*}dS{Re}\Big\{{e^{-\mu_{\lambda}{\sigma_{\lambda}}}}\Big\}\\
&+{\lambda}Re\Big\{\langle{\psi_{\lambda},{\psi_{\lambda}}[u_{\lambda}^{*}f_{u}(x,u_{\lambda}^{*})+f(x,u_{\lambda}^{*})]}\rangle\Big\}
-Re\Big\{\mu_{\lambda}\Big\},\\
{\leq}&\big\|\lambda{r(x)g_{u}(u_{\lambda}^{*})}\big\|_{\infty}+\frac{|d|}{2}\big\|\lambda{r(x)g(u_{\lambda}^{*})}\big\|_{\infty}
+\frac{|d|}{2}\big\|\triangle{u_{\lambda}^{*}}\big\|_{\infty}
+|d|{\displaystyle\max_{x\in\bar{\Omega}}\big\{{u_{\lambda}^{*}(x)}\big\}}\big\|{\nabla{\psi_\lambda}}\big\|_{\mathcal{Y}_{\mathbb{C}}}^{2}\\
&+|d|\big\|\lambda{r(x)g_{u}(u_{\lambda}^{*})u_{\lambda}^{*}}\big\|_{\infty}
+\big\|\lambda{u_{\lambda}^{*}f_{u}(x,u_{\lambda}^{*})}+\lambda{f(x,u_{\lambda}^{*})}\big\|_{\infty},\\
:=&{\mathbf{C}_{0}}+|d|{\displaystyle\max_{x\in\bar{\Omega}}\big\{{u_{\lambda}^{*}(x)}\big\}}\big\|{\nabla{\psi_\lambda}}\big\|_{\mathcal{Y}_{\mathbb{C}}}^{2},
\end{split}
\end{eqnarray}
thus, it follows from $(A3)$ that we get
\begin{eqnarray}\label{eq33}
\big\|{\nabla{\psi_\lambda}}\big\|_{\mathcal{Y}_{\mathbb{C}}}^{2}
{\leq}\frac{\mathbf{C}_{0}}{1-|d|{\displaystyle\max_{x\in\bar{\Omega}}\big\{{u_{\lambda}^{*}(x)}\big\}}}:={\mathbf{C}}.
\end{eqnarray}

The proof is completed.
\end{proof}

Sequently a priori estimate for the eigenvalue can be obtained in the following lemma.
\begin{lemma}\label{le4}
Suppose that $(A0)$, $(A3)$, (\ref{eq12}) and (\ref{eq13}) are satisfied. If $(\lambda,\mu_{\lambda},\sigma_{\lambda},\psi_{\lambda})\in{\Upsilon}\times{\mathbb{C}}\times{\mathbb{R}}\setminus{\mathbb{R_{-}}}\times{\mathcal{X}_{\mathbb{C}}\setminus{\{0\}}}$ with $Re\{\mu_{\lambda}\}\geq 0$ can solve (\ref{eq29}) and satisfy $\partial_{\overrightarrow{n}}\psi_{\lambda}={\lambda}r(x)g_{u}(u_{\lambda}^{*})\psi_{\lambda}$ on $\partial{\Omega}$,
then $\Big|\frac{\mu_{\lambda}}{\vartheta_{\lambda}^{*}}\Big|$ is bounded for $\lambda\in{\Upsilon}$.
\end{lemma}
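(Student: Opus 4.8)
The plan is to extract an exact identity for $\mu_\lambda$ by testing the eigenvalue equation (\ref{eq29}) against the \emph{principal eigenfunction} $\varphi_1$ instead of against $\bar\psi_\lambda$, so that the $O(1)$ contributions cancel identically and only $O(\vartheta_\lambda^*)$ remainders survive. First I would multiply $\triangle(\lambda,\mu_\lambda,\sigma_\lambda)\psi_\lambda=0$ by $\varphi_1$, integrate over $\Omega$, and apply Green's second identity to $\int_\Omega\varphi_1\triangle\psi_\lambda\,dx$. Inserting $\triangle\varphi_1=-\lambda_1 f(x,0)\varphi_1$ and $\partial_{\overrightarrow{n}}\varphi_1=\lambda_1 r(x)g_u(0)\varphi_1$ from (\ref{eq15}), together with the boundary condition $\partial_{\overrightarrow{n}}\psi_\lambda=\lambda r(x)g_u(u_\lambda^*)\psi_\lambda$, the leading Laplacian and reaction terms collapse into the differences $\lambda f(x,u_\lambda^*)-\lambda_1 f(x,0)$ and $\lambda g_u(u_\lambda^*)-\lambda_1 g_u(0)$, giving
\begin{equation*}
\mu_\lambda\int_\Omega\varphi_1\psi_\lambda\,dx=\int_\Omega[\lambda f(x,u_\lambda^*)-\lambda_1 f(x,0)]\varphi_1\psi_\lambda\,dx+\int_{\partial\Omega}[\lambda g_u(u_\lambda^*)-\lambda_1 g_u(0)]r(x)\varphi_1\psi_\lambda\,dS+\mathcal{M}_\lambda,
\end{equation*}
where $\mathcal{M}_\lambda$ gathers the memory contributions $d\int_\Omega\varphi_1\nabla\cdot(\psi_\lambda\nabla u_\lambda^*)\,dx$, $de^{-\mu_\lambda\sigma_\lambda}\int_\Omega\varphi_1\nabla\cdot(u_\lambda^*\nabla\psi_\lambda)\,dx$ and $\lambda\int_\Omega\varphi_1 u_\lambda^* f_u(x,u_\lambda^*)\psi_\lambda\,dx$. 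The point is that this cancellation is \emph{exact}, because $\varphi_1$ solves its own eigenproblem (\ref{eq15}) at $\lambda_1$.

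Next I would show that every term on the right-hand side is $O(\vartheta_\lambda^*)$, uniformly in $\lambda$. Since $u_\lambda^*=\vartheta_\lambda^*\varphi_1+h(\vartheta_\lambda^*\varphi_1,\lambda)$ with $h=o(\vartheta_\lambda^*)$, one gets $\|u_\lambda^*\|_\infty$, $\|\nabla u_\lambda^*\|_\infty$ and $\|\triangle u_\lambda^*\|_\infty$ all $O(\vartheta_\lambda^*)$ (the last from the steady-state equation (\ref{eq4})), while $|\lambda-\lambda_1|=O(\vartheta_\lambda^*)$ by the expansion $\vartheta_\lambda=2\varrho(\lambda_1-\lambda)/\kappa+o(|\lambda-\lambda_1|)$ from Theorem \ref{th2.1}. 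Consequently $f(x,u_\lambda^*)-f(x,0)$, $g_u(u_\lambda^*)-g_u(0)$ and $\lambda-\lambda_1$ are each $O(\vartheta_\lambda^*)$; after integrating the second memory term by parts to remove $\triangle\psi_\lambda$ and using the a priori bound $\|\nabla\psi_\lambda\|_{\mathcal{Y}_{\mathbb{C}}}^2\le\mathbf{C}$ of Lemma \ref{le3} together with $|e^{-\mu_\lambda\sigma_\lambda}|\le1$ (valid because $Re\{\mu_\lambda\}\ge0$ and $\sigma_\lambda\ge0$), one obtains $|\mathcal{M}_\lambda|\le C\vartheta_\lambda^*$. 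This produces the estimate $|\mu_\lambda|\,\big|\int_\Omega\varphi_1\psi_\lambda\,dx\big|\le C\vartheta_\lambda^*$.

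The remaining, and I expect hardest, step is to bound $\big|\int_\Omega\varphi_1\psi_\lambda\,dx\big|$ away from zero as $\lambda\to\lambda_1$; I would do this by contradiction and compactness. The $H^1$ bound of Lemma \ref{le3}, the normalization $\|\psi_\lambda\|_{\mathcal{Y}_{\mathbb{C}}}=1$, and the compactness of the embeddings keep $\int_\Omega|\psi_\lambda|^2\,dx$ bounded below, so a self-testing of (\ref{eq29}) by $\bar\psi_\lambda$ (as in (\ref{eq31})) bounds $Re\{\mu_\lambda\}$ and forces $Im\{\mu_\lambda\}=O(\vartheta_\lambda^*)$; hence $\mu_\lambda$ stays bounded. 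Suppose now $\int_\Omega\varphi_1\psi_{\lambda_n}\,dx\to0$ along some $\lambda_n\to\lambda_1$. Elliptic $L^p$-regularity applied to (\ref{eq29}) (whose leading coefficient $1+de^{-\mu_\lambda\sigma_\lambda}u_\lambda^*$ has modulus bounded below by $(A3)$) yields a uniform $W^{2,p}$ bound, so a subsequence satisfies $\psi_{\lambda_n}\to\psi_*$ strongly in $\mathcal{Y}_{\mathbb{C}}$ with $\|\psi_*\|_{\mathcal{Y}_{\mathbb{C}}}=1$ and $\mu_{\lambda_n}\to\mu_*$ with $Re\{\mu_*\}\ge0$. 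Passing to the limit, $u_{\lambda_n}^*\to0$ annihilates the memory terms and $\psi_*$ solves $\triangle\psi_*+\lambda_1 f(x,0)\psi_*=\mu_*\psi_*$ with $\partial_{\overrightarrow{n}}\psi_*=\lambda_1 r(x)g_u(0)\psi_*$. Since $\lambda_1$ is the principal eigenvalue in Lemma \ref{le2}, the value $\mu=0$ is a simple dominant eigenvalue of this operator with eigenspace $\mathrm{span}\{\varphi_1\}$ and every other eigenvalue has negative real part; thus $Re\{\mu_*\}\ge0$ forces $\mu_*=0$ and $\psi_*=c\varphi_1$. Strong convergence then gives $c=\int_\Omega\varphi_1\psi_*\,dx=\lim_n\int_\Omega\varphi_1\psi_{\lambda_n}\,dx=0$, so $\psi_*=0$, contradicting $\|\psi_*\|_{\mathcal{Y}_{\mathbb{C}}}=1$. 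Hence $\big|\int_\Omega\varphi_1\psi_\lambda\,dx\big|\ge c_0>0$ near $\lambda_1$, which with the estimate of the previous paragraph gives $|\mu_\lambda/\vartheta_\lambda^*|\le C/c_0$ there; away from $\lambda_1$, $\vartheta_\lambda^*$ is bounded below while $\mu_\lambda$ is bounded, so the ratio is bounded throughout $\Upsilon$, completing the argument.
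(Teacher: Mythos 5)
Your proposal is correct in substance, but it is a genuinely different argument from the paper's. The paper never pairs the eigenvalue equation with $\varphi_{1}$: it tests $\triangle(\lambda,\mu_{\lambda},\sigma_{\lambda})\psi_{\lambda}=0$ against $\bar{\psi}_{\lambda}$ itself and introduces the comparison operator $Q_{\lambda}\psi=\triangle\psi+d\nabla\cdot(u_{\lambda}^{*}\nabla\psi)+\lambda f(x,u_{\lambda}^{*})\psi$, observing that $Q_{\lambda}u_{\lambda}^{*}=0$ with $u_{\lambda}^{*}>0$, so that $0$ is the principal eigenvalue and $\langle\psi,Q_{\lambda}\psi\rangle\leq 0$; combined with $Re\{\mu_{\lambda}\}\geq 0$ this sandwiches $Re\{\mu_{\lambda}\}$ between $0$ and remainder terms, the realness of $\langle\psi,Q_{\lambda}\psi\rangle$ handles $Im\{\mu_{\lambda}\}$, and every remainder carries a factor $u_{\lambda}^{*}$, $\nabla u_{\lambda}^{*}$ or $\triangle u_{\lambda}^{*}$, hence is $O(\vartheta_{\lambda}^{*})$ by Lemma \ref{le3} and the continuity of $\lambda\mapsto(\vartheta_{\lambda}^{*},u_{\lambda}^{*})$. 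Because the coefficient of $\mu_{\lambda}$ in that self-pairing is exactly $\|\psi_{\lambda}\|^{2}=1$, the paper needs no non-degeneracy argument at all, which is precisely where your version does its hardest work: your pairing with $\varphi_{1}$ yields the exact cancellation identity cheaply, but the whole burden then falls on the lower bound $\big|\int_{\Omega}\varphi_{1}\psi_{\lambda}dx\big|\geq c_{0}>0$, which you get via uniform $W^{2,p}$ estimates, compactness, and a limit spectral argument --- machinery the paper avoids entirely. In exchange, your route buys something real: you never need the paper's assertion that $0$ is the principal eigenvalue of $Q_{\lambda}$ under the \emph{linearized} boundary condition $\partial_{\overrightarrow{n}}\psi=\lambda r(x)g_{u}(u_{\lambda}^{*})\psi$, which is not immediate there since $u_{\lambda}^{*}$ satisfies $\partial_{\overrightarrow{n}}u_{\lambda}^{*}=\lambda r(x)g(u_{\lambda}^{*})$ rather than the linearized condition; your limit problem at $\lambda_{1}$ genuinely is the self-adjoint problem (\ref{eq11}), for which simplicity and dominance of the zero eigenvalue are standard. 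Two points you should make explicit to close the argument: first, the boundary difference $\lambda g_{u}(u_{\lambda}^{*})-\lambda_{1}g_{u}(0)$ is $O(\vartheta_{\lambda}^{*})$ only if $g_{u}$ is Lipschitz near $0$ --- literal $(A0)$ gives only H\"older-$\tilde{\alpha}$ regularity, hence $O\big((\vartheta_{\lambda}^{*})^{\tilde{\alpha}}\big)$, which would not suffice; the paper's use of $g_{uu}(0)$ and $g_{uuu}(0)$ in $\kappa$ and $\nu$ shows the intended smoothness covers you, and note the paper's own proof never forms such differences. Second, your step $|\lambda-\lambda_{1}|=O(\vartheta_{\lambda}^{*})$ requires $\varrho\neq 0$, which does hold: multiplying (\ref{eq15}) by $\varphi_{1}$ and integrating gives $\lambda_{1}\varrho=\int_{\Omega}|\nabla\varphi_{1}|^{2}dx$, which is positive since a constant $\varphi_{1}$ would force $f(x,0)\equiv 0$ and $r(x)g_{u}(0)\equiv 0$, contradicting (\ref{eq12}).
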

\begin{proof}
For each fixed $\lambda\in{\Upsilon}$, define the linear self-conjugate operator $Q_{\lambda}:{\mathcal{X}_{\mathbb{C}}}\rightarrow{\mathcal{Y}_{\mathbb{C}}}$ by
\begin{eqnarray*}
Q_{\lambda}\psi=\triangle{\psi}+d\nabla\cdot({u_{\lambda}^{*}}\nabla{\psi})+{\lambda}f(x,u_{\lambda}^{*})\psi,     \qquad  \forall{\psi}\in{\mathcal{X}_{\mathbb{C}}},
\end{eqnarray*}
and note that $u_{\lambda}^{*}>0$ is a positive solution of (\ref{eq4}), thus $Q_{\lambda}u_{\lambda}^{*}=0$, which implies that $0$ is the principal eigenvalue of $Q_{\lambda}$. Therefore, $\langle{\psi,Q_{\lambda}\psi}\rangle\leq{0}$ for $\forall{\psi}\in{\mathcal{X}_{\mathbb{C}}}$. Then taking the inner product of $\psi_{\lambda}$ with both sides of $\triangle(\lambda,\mu_{\lambda},\sigma_{\lambda})\psi_{\lambda}=0$, we get
\begin{eqnarray}\label{eq34}
\begin{split}
0\geq\langle{\psi_{\lambda},Q_{\lambda}\psi_{\lambda}}\rangle
=&\langle{\psi_{\lambda},\triangle{\psi_{\lambda}}+d\nabla\cdot({u_{\lambda}^{*}}\nabla{\psi_{\lambda}})+{\lambda}f(x,u_{\lambda}^{*})\psi_{\lambda}}\rangle\\
=&\mu_{\lambda}-d\langle{\psi_{\lambda},\nabla\cdot(\psi_{\lambda}\nabla{u_{\lambda}^{*}})}\rangle
-d\big[e^{-\mu_{\lambda}\sigma_{\lambda}}-1\big]\langle{\psi_{\lambda},\nabla\cdot({u_{\lambda}^{*}}\nabla{\psi_{\lambda}})}\rangle\\
&-\langle{\psi_{\lambda},{\lambda}u_{\lambda}^{*}f_{u}(x,u_{\lambda}^{*})\psi_{\lambda}}\rangle.
\end{split}
\end{eqnarray}

By the embedding theorem \cite{2003-Sobolev Spaces(Adams)} and regularity theory for elliptic equations \cite{2001-Elliptic-PDE-Second Order}, we can give the following estimate
from (\ref{eq30}), $Re\{\mu_{\lambda}\}\geq 0$, and Lemma \ref{le3}
\begin{eqnarray*}
\begin{split}
\Bigg|Re\bigg\{\frac{\mu_{\lambda}}{\vartheta_{\lambda}^{*}}\bigg\}\Bigg|\leq
&\Bigg|Re\bigg\{\frac{d}{\vartheta_{\lambda}^{*}}\langle{\psi_{\lambda},\nabla\cdot(\psi_{\lambda}\nabla{u_{\lambda}^{*}})}\rangle\bigg\}\Bigg|+
\Bigg|Re\bigg\{\frac{d}{\vartheta_{\lambda}^{*}}
\big[e^{-\mu_{\lambda}\sigma_{\lambda}}-1\big]\langle{\psi_{\lambda},\nabla\cdot({u_{\lambda}^{*}}\nabla{\psi_{\lambda}})}\rangle\bigg\}\Bigg|\\
&+\Bigg|Re\bigg\{\frac{\lambda}{\vartheta_{\lambda}^{*}}\int_{\Omega}|\psi_{\lambda}|^{2}u_{\lambda}^{*}f_{u}(x,u_{\lambda}^{*})dx\bigg\}\Bigg|\\
\leq
&\bigg|\frac{\lambda{d}}{2\vartheta_{\lambda}^{*}}\bigg|\big\|r(x)g_{u}(0)u_{\lambda}^{*}\big\|_{\infty}
+\bigg|\frac{d}{2\vartheta_{\lambda}^{*}}\bigg|\big\|\triangle{u_{\lambda}^{*}}\big\|_{\infty}+
\bigg|\frac{2d}{\vartheta_{\lambda}^{*}}\bigg|
\Big[{\mathbf{C}}\big\|{u_{\lambda}^{*}}\big\|_{\infty}+\big\|{{\lambda}r(x)g_{u}(u_{\lambda}^{*})u_{\lambda}^{*}}\big\|_{\infty}\Big]\\
&+\bigg|\frac{\lambda}{\vartheta_{\lambda}^{*}}\bigg|\Big\|{u_{\lambda}^{*}f_{u}(x,u_{\lambda}^{*})}\Big\|_{\infty},
\end{split}
\end{eqnarray*}

Similarly, we have
\begin{eqnarray*}
\begin{split}
\Bigg|Im\bigg\{\frac{\mu_{\lambda}}{\vartheta_{\lambda}^{*}}\bigg\}\Bigg|\leq
&\Bigg|Im\bigg\{\frac{d}{\vartheta_{\lambda}^{*}}\langle{\psi_{\lambda},\nabla\psi_{\lambda}\cdot\nabla{u_{\lambda}^{*}}}\rangle\bigg\}\Bigg|+
\Bigg|Im\bigg\{\frac{de^{-\mu_{\lambda}\sigma_{\lambda}}}{\vartheta_{\lambda}^{*}}
\langle{\psi_{\lambda},\nabla\cdot({u_{\lambda}^{*}}\nabla{\psi_{\lambda}})}\rangle\bigg\}\Bigg|\\
\leq
&\bigg|\frac{d\sqrt{\mathbf{C}}}{\vartheta_{\lambda}^{*}}\bigg|\big\|{\nabla{u_{\lambda}^{*}}}\big\|_{\infty}+
\bigg|\frac{d}{\vartheta_{\lambda}^{*}}\bigg|
\Big[{\mathbf{C}}\big\|{u_{\lambda}^{*}}\big\|_{\infty}+\big\|{{\lambda}r(x)g_{u}(u_{\lambda}^{*})u_{\lambda}^{*}}\big\|_{\infty}\Big].
\end{split}
\end{eqnarray*}

Hence, it follows from Theorem \ref{th2.1} that $\lambda\mapsto(\vartheta_{\lambda}^{*},u_{\lambda}^{*})$ is
continuous, it is easy to see that $\Big|\frac{\mu_{\lambda}}{\vartheta_{\lambda}^{*}}\Big|$ is bounded for $\lambda\in\Upsilon$.
\end{proof}

Now we first consider the existence of the zero eigenvalue of (\ref{eq28}). For convenience, define the  notation
\begin{equation}\label{eq35}
m(\lambda,\mu,\sigma)\psi:=
\left(\begin{array}{c}
   \triangle(\lambda,\mu,\sigma)\psi\\
   \partial_{\overrightarrow{n}}\psi-{\lambda}r(x)g_{u}(u_{\lambda}^{*})\psi
\end{array}\right).
\end{equation}

\begin{lemma}\label{le5}
Suppose that $(A0)$, $(A3)$, (\ref{eq12}) and (\ref{eq13}) are satisfied.
\begin{itemize}
\item[(i)] For each $(\sigma,\lambda)\in{\mathbb{R}}\setminus{\mathbb{R_{-}}}\times\Upsilon$, if $0$ is a zero eigenvalue of (\ref{eq28}), then $\kappa=0$.

\item[(ii)] For each $(\sigma,\lambda)\in{\mathbb{R}}\setminus{\mathbb{R_{-}}}\times\Upsilon$, if $\kappa=0$, $T_{u{u}}[\varphi_{1},\varphi_{1}]\neq{0}$,
then $0$ is a zero eigenvalue of (\ref{eq28}).
\end{itemize}
\end{lemma}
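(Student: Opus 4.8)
The plan is to exploit the fact that setting $\mu=0$ collapses the delay. Since $e^{-\mu\sigma}=1$ at $\mu=0$, the operator $m(\lambda,\mu,\sigma)$ of (\ref{eq35}) reduces, for \emph{every} $\sigma\in\mathbb{R}\setminus\mathbb{R}_{-}$, to $m(\lambda,0,\sigma)\psi=D_uT(u_\lambda^*,\lambda)[\psi]=:\mathcal{L}_\lambda\psi$, namely the Fr\'echet derivative (\ref{eq9}) evaluated at $u_*=u_\lambda^*$ (both the interior and the boundary components match). Hence ``$0$ is a zero eigenvalue of (\ref{eq28})'' is equivalent, independently of $\sigma$, to the statement that $\mathcal{L}_\lambda:\mathcal{X}_{\mathbb{C}}\to\mathcal{Y}_{\mathbb{C}}$ has a nontrivial kernel, i.e. $\mathcal{L}_\lambda$ is singular. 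The whole lemma thus becomes a statement about when the linearization along the bifurcating branch $u_\lambda^*=\vartheta_\lambda^*\varphi_1+h(\vartheta_\lambda^*\varphi_1,\lambda)$ degenerates, which I read off from the scalar reduced equation $G$.

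Next I record the key Lyapunov--Schmidt identity. Along the family $u(\vartheta,\lambda)=\vartheta\varphi_1+h(\vartheta\varphi_1,\lambda)$ one has, by (\ref{eq17}) and (\ref{eq18}), $T(u(\vartheta,\lambda),\lambda)=G(\vartheta,\lambda)\phi$, where $\phi$ spans $\mathcal{Y}_1$. Differentiating in $\vartheta$ gives
\begin{equation*}
\mathcal{L}_\lambda\big[\partial_\vartheta u\big]=G_\vartheta(\vartheta,\lambda)\,\phi,\qquad \partial_\vartheta u=\varphi_1+h_{u_0}[\varphi_1].
\end{equation*}
Because $\phi\notin\mathcal{R}(\mathcal{L}_{\lambda_1})$ and, for $\lambda\in\Upsilon$ near $\lambda_1$, the range of $\mathcal{L}_\lambda$ is a continuous Fredholm perturbation of $\mathcal{R}(\mathcal{L}_{\lambda_1})$ with $\dim\mathcal{N}(\mathcal{L}_\lambda)\le1$, this identity yields the equivalence: $\mathcal{L}_\lambda$ is singular if and only if $G_\vartheta(\vartheta_\lambda^*,\lambda)=0$. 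Indeed, if $G_\vartheta\neq0$ then $\phi\in\mathcal{R}(\mathcal{L}_\lambda)$, which (since the range cannot both contain $\phi$ and stay close to $\mathcal{R}(\mathcal{L}_{\lambda_1})$ while being proper) forces $\mathcal{L}_\lambda$ onto; while if $G_\vartheta=0$ then $\partial_\vartheta u\approx\varphi_1\neq0$ lies in $\mathcal{N}(\mathcal{L}_\lambda)$.

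Then I compute $G_\vartheta$ on the branch from the normal form (\ref{eq19}). Writing $G=\vartheta\big[\varrho(\lambda-\lambda_1)+\tfrac{\kappa}{2}\vartheta+\tfrac{\nu}{6}\vartheta^2+\cdots\big]$ and using that the bracket vanishes at $\vartheta=\vartheta_\lambda^*\neq0$, I obtain $G_\vartheta(\vartheta_\lambda^*,\lambda)=\tfrac{\kappa}{2}\vartheta_\lambda^*+o(\vartheta_\lambda^*)$ as $\lambda\to\lambda_1$. For (i): if $0$ is a zero eigenvalue then $G_\vartheta(\vartheta_\lambda^*,\lambda)=0$; dividing by $\vartheta_\lambda^*\neq0$ and passing to the limit $\lambda\to\lambda_1$ along $\Upsilon$ (where $\vartheta_\lambda^*\to0$) forces $\kappa=0$. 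For (ii): when $\kappa=0$ the leading term of $G_\vartheta$ vanishes, so $\mathcal{L}_\lambda[\partial_\vartheta u]=0$ to the relevant order; here the hypothesis $T_{uu}[\varphi_1,\varphi_1]\neq0$ is exactly what certifies the kernel candidate is genuine, since by (\ref{eq20}) it gives $\varsigma=h_{u_0u_0}[\varphi_1,\varphi_1]\neq0$ (as $\mathcal{L}_{\lambda_1}\varsigma=-T_{uu}[\varphi_1,\varphi_1]\neq0$), so that $\partial_\vartheta u=\varphi_1+\vartheta_\lambda^*\varsigma+\cdots$ is a nonzero element annihilated by $\mathcal{L}_\lambda$, i.e. $0$ is a zero eigenvalue.

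The main obstacle is making the equivalence ``$\mathcal{L}_\lambda$ singular $\iff G_\vartheta=0$'' and the leading-order identity for $G_\vartheta$ fully rigorous on all of $\Upsilon$ rather than only in the limit $\lambda\to\lambda_1$: the exact expansion $G_\vartheta(\vartheta_\lambda^*,\lambda)=\tfrac{\kappa}{2}\vartheta_\lambda^*+\tfrac{\nu}{3}\vartheta_\lambda^{*2}+\cdots$ carries higher-order ($\nu$) corrections, so the clean dichotomy between (i) and (ii) is really a statement about the leading coefficient $\kappa$ near the bifurcation point. Consequently the delicate points are the uniform Fredholm control of $\mathcal{R}(\mathcal{L}_\lambda)$ for $\lambda\in\Upsilon$ and the careful tracking of $T_{uu}[\varphi_1,\varphi_1]\neq0$ to guarantee nondegeneracy of the constructed eigenfunction; the remaining computations (the reduction at $\mu=0$ and the expansion of $G_\vartheta$) are routine.
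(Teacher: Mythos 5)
Your reduction at $\mu=0$ is exactly right and is also the (implicit) starting point of the paper: since $e^{-\mu\sigma}=1$, the operator $m(\lambda,0,\sigma)$ coincides for every admissible $\sigma$ with the steady-state linearization $\mathcal{L}_\lambda:=D_uT(u_\lambda^*,\lambda)$, so the zero-eigenvalue question is $\sigma$-independent. For part (i) your route is genuinely different from the paper's: the paper decomposes a putative eigenfunction as $\psi_\lambda=\alpha_\lambda\varphi_1+\vartheta_\lambda^*\beta_\lambda$ as in (\ref{eq36}), passes to the limit operator $\hat{\varphi}_1$ of (\ref{eq37}) with $\langle\Psi,\hat{\varphi}_1\rangle_1=\kappa/2$, and derives a contradiction when $\kappa\neq0$; you instead differentiate the Lyapunov--Schmidt identity $T(u(\vartheta,\lambda),\lambda)=G(\vartheta,\lambda)\phi$ to get $\mathcal{L}_\lambda[\partial_\vartheta u]=G_\vartheta(\vartheta,\lambda)\phi$ and read the degeneracy off $G_\vartheta(\vartheta_\lambda^*,\lambda)=\tfrac{\kappa}{2}\vartheta_\lambda^*+o(\vartheta_\lambda^*)$. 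Both are limiting arguments as $\lambda\to\lambda_1$ and reach the same conclusion; your equivalence ``$\mathcal{L}_\lambda$ singular $\iff G_\vartheta(\vartheta_\lambda^*,\lambda)=0$'' is justified only by a hand-wavy range-continuity claim, but it can be made rigorous with tools already in the paper: solve the $(I-Q_{\mathcal{Y}})$-component of $\mathcal{L}_\lambda[\varphi_1+w]=0$ uniquely for $w\in\mathcal{X}_1$ (the restricted operator is invertible near $\lambda_1$, which also rules out kernel elements with vanishing $\varphi_1$-component), note that differentiating (\ref{eq17}) identifies $w=h_{u_0}[\varphi_1]$, and observe that the remaining scalar condition is precisely $\langle\Psi,\mathcal{L}_\lambda[\varphi_1+w]\rangle_1=G_\vartheta(\vartheta_\lambda^*,\lambda)$. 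So (i) is correct, by a cleaner exchange-of-stability-type mechanism than the paper's.

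Part (ii), however, has a genuine gap. Showing $\mathcal{L}_\lambda[\partial_\vartheta u]=0$ ``to the relevant order'' does not exhibit an exact zero eigenvalue, and your use of $T_{uu}[\varphi_1,\varphi_1]\neq0$ is off target: it gives $\varsigma\neq0$, but $\partial_\vartheta u\approx\varphi_1$ is nonzero regardless of $\varsigma$; what is missing is exact annihilation, not nontriviality. Worse, your own equivalence works against you: on the branches of Theorem \ref{th2.1}(ii) one has $\kappa=0$, $\nu\neq0$, and the bracket in (\ref{eq19}) vanishes at $\vartheta=\vartheta_\lambda^{\pm}$, whence $G_\vartheta(\vartheta_\lambda^{\pm},\lambda)=\tfrac{\nu}{3}(\vartheta_\lambda^{\pm})^2+o\big((\vartheta_\lambda^{\pm})^2\big)\approx-2\varrho(\lambda-\lambda_1)\neq0$ for $\lambda\neq\lambda_1$, so your criterion would conclude that $\mathcal{L}_\lambda$ is \emph{nonsingular} on these pitchfork branches --- your scheme cannot yield (ii) at all. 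The paper proves (ii) by a completely different construction: it substitutes the ansatz $\psi=\varphi_1+\vartheta_\lambda^*\beta$ into $m(\lambda,0,\sigma)\psi=0$, obtains $H(\beta,\lambda_1)=T_{uu}[\varphi_1,\varphi_1]+\mathcal{L}_{\lambda_1}\beta$ in (\ref{eq38}) (the $T_{\lambda u}$ term drops because $(\lambda-\lambda_1)/\vartheta_\lambda^*\to0$ when $\kappa=0$), uses $\kappa=0$ to place $T_{uu}[\varphi_1,\varphi_1]$ in $\mathcal{R}(\mathcal{L}_{\lambda_1})$ and $T_{uu}[\varphi_1,\varphi_1]\neq0$ to get a nontrivial $\beta^*$, and then continues the solution in $\lambda$ by the implicit function theorem. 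You should be aware, though, that your computation exposes a real tension with that step: the paper's $H$ maps $(\mathcal{X}_1)_{\mathbb{C}}\times\Upsilon$ into all of $\mathcal{Y}_{\mathbb{C}}$, while its $\beta$-derivative $\mathcal{L}_{\lambda_1}|_{(\mathcal{X}_1)_{\mathbb{C}}}$ is injective with range $\mathcal{R}(\mathcal{L}_{\lambda_1})$ of codimension one, hence not surjective; solving the unprojected equation $H=0$ therefore requires the extra scalar compatibility condition $\langle\Psi,H(\beta,\lambda)\rangle_1=0$, which at leading order is the very quantity $G_\vartheta(\vartheta_\lambda^*,\lambda)/\vartheta_\lambda^*\approx\tfrac{\nu}{3}\vartheta_\lambda^*$ that your expansion shows to be nonzero; unlike the purely imaginary case (\ref{eq39})--(\ref{eq44}), where the two free parameters $(\delta,\theta)$ absorb this codimension, no free parameter is available at $\mu=0$. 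So your proposal does not reproduce the paper's proof of (ii), and the ``to the relevant order'' step is not repairable within your framework without confronting exactly this scalar obstruction.
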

\begin{proof}
$(i)$ If $0$ is the zero eigenvalue of (\ref{eq28}), then there exists some $(\lambda,\sigma_{\lambda},\psi_{\lambda})\in{\Upsilon}\times{\mathbb{R}}\setminus{\mathbb{R_{-}}}\times{\mathcal{X}_{\mathbb{C}}\setminus{\{0\}}}$
such that $m(\lambda,0,\sigma_{\lambda})\psi_{\lambda}=0$. Note that the decomposition of space $\mathcal{X}_{\mathbb{C}}=(\mathcal{N}(\mathcal{L}_{\lambda_{1}}))_{\mathbb{C}}\oplus({\mathcal{X}_{1}})_{\mathbb{C}}$,
and $m(\lambda_{1},0,\sigma_{\lambda_{1}})=\mathcal{L}_{\lambda_{1}}$,
and $\mathcal{N}(\mathcal{L}_{\lambda_{1}})=\text{span}\{\varphi_{1}\}$.
After ignoring a scalar factor, $\psi_{\lambda}$ is represented as
\begin{eqnarray}\label{eq36}
\begin{cases}
\psi_{\lambda}=\alpha_{\lambda}\varphi_{1}+\vartheta_{\lambda}^{*}\beta_{\lambda}, ~~\beta_{\lambda}\in({\mathcal{X}_{1}})_{\mathbb{C}},
~~\alpha_{\lambda}\geq 0,\\
\|\psi_{\lambda}\|^{2}_{\mathcal{Y}_{\mathbb{C}}}=
\alpha_{\lambda}^{2}\|{\varphi_{1}}\|^{2}_{\mathcal{Y}_{\mathbb{C}}}+(\vartheta_{\lambda}^{*})^{2}\|{\beta_{\lambda}}\|^{2}_{\mathcal{Y}_{\mathbb{C}}}
=\|{\varphi_{1}}\|^{2}_{\mathcal{Y}_{\mathbb{C}}}=1.
\end{cases}
\end{eqnarray}
Thus, $m(\lambda,0,\sigma_{\lambda})\psi_{\lambda}=
\alpha_{\lambda}m(\lambda,0,\sigma_{\lambda})\varphi_{1}+\vartheta_{\lambda}^{*}m(\lambda,0,\sigma_{\lambda})\beta_{\lambda}=0$. By Taylor expansion and some  calculations, we have
\begin{eqnarray}\label{eq37}
\displaystyle\lim_{\lambda\rightarrow \lambda_{1}}\frac{1}{\vartheta_{\lambda}^{*}}m(\lambda,0,\sigma_{\lambda})\varphi_{1}=
T_{u{u}}[\varphi_{1},\varphi_{1}]+T_{\lambda{u}}[\varphi_{1}]\displaystyle\lim_{\lambda\rightarrow \lambda_{1}}\frac{\lambda-\lambda_{1}}{\vartheta_{\lambda}^{*}}:=\hat{\varphi}_{1},
\end{eqnarray}
and $\langle{\Psi,\hat{\varphi}_{1}}\rangle_{1}=\frac{1}{2}\kappa$. Therefore, we get $\alpha_{\lambda_{1}}\hat{\varphi}_{1}+\mathcal{L}_{\lambda_{1}}\beta_{\lambda_{1}}=0$.
If $\kappa\neq{0}$, then $\hat{\varphi}_{1}\notin\mathcal{R}(\mathcal{L}_{\lambda_{1}})$, which means that $\alpha_{\lambda_{1}}=0$.
Further, it follows from the fact that $\beta_{\lambda_{1}}\in({\mathcal{X}_{1}})_{\mathbb{C}}$ and $\mathcal{L}_{\lambda_{1}}$ is invertible
if it is restricted in $({\mathcal{X}_{1}})_{\mathbb{C}}$ that we have $\beta_{\lambda_{1}}\equiv{0}$. This implies that
$\displaystyle\lim_{\lambda\rightarrow \lambda_{1}}m(\lambda,0,\sigma_{\lambda})\psi_{\lambda}=0$ has only the zero solution, which contradicts
$\psi_{\lambda_{1}}\in{\mathcal{X}_{\mathbb{C}}\setminus{\{0\}}}$. Hence, $\kappa={0}$.

$(ii)$  In order to check that $0$ is the zero eigenvalue of (\ref{eq28}), it suffices to look for a solution of $(\lambda,\sigma,\psi)\in\Upsilon\times{\mathbb{R}}\setminus{\mathbb{R_{-}}}\times{\mathcal{X}_{\mathbb{C}}\setminus{\{0\}}}$ to the equation $m(\lambda,0,\sigma)\psi=0$.
Substituting $\psi=\varphi_{1}+\vartheta_{\lambda}^{*}\beta$ with $\beta\in({\mathcal{X}_{1}})_{\mathbb{C}}$ into $m(\lambda,0,\sigma)\psi=0$, we get
\begin{eqnarray}\label{eq38}
H(\beta,\lambda):=\frac{1}{\vartheta_{\lambda}^{*}}m(\lambda,0,\sigma)\varphi_{1}+m(\lambda,0,\sigma)\beta=0,
\end{eqnarray}
from which we obtain that $H(\beta,\lambda_{1})=T_{u{u}}[\varphi_{1},\varphi_{1}]+\mathcal{L}_{\lambda_{1}}\beta$.
Next, from the assumption that $\kappa=0$ and $T_{u{u}}[\varphi_{1},\varphi_{1}]\neq{0}$,
which could imply that $0\neq{T_{u{u}}[\varphi_{1},\varphi_{1}]}\in\mathcal{R}(\mathcal{L}_{\lambda_{1}})$, there exists $\beta^{*}\in({\mathcal{X}_{1}})_{\mathbb{C}}\setminus{\{0\}}$ such that $T_{u{u}}[\varphi_{1},\varphi_{1}]=-\mathcal{L}_{\lambda_{1}}\beta^{*}$,
$i.e.$, $H(\beta^{*},\lambda_{1})=0$. It is easy to see that ${T_{u{u}}[\varphi_{1},\varphi_{1}]}\neq{0}$ and $H_{\beta}(\beta^{*},\lambda_{1})=\mathcal{L}_{\lambda_{1}}$ is invertible
as it is restricted in $({\mathcal{X}_{1}})_{\mathbb{C}}$. Then by the implicit function theorem, we get a continuously differentiable mapping
$\lambda\mapsto{\beta_{\lambda}}$ from $\Upsilon$ to $({\mathcal{X}_{1}})_{\mathbb{C}}$ such that $\beta_{\lambda_{1}}=\beta^{*}$, and $H(\beta_{\lambda},\lambda)\equiv{0}$. This means that $m(\lambda,0,\sigma)\psi=0$ has a nontrivial solution $\psi=\varphi_{1}+\vartheta_{\lambda}^{*}\beta_{\lambda}\in{\mathcal{X}_{\mathbb{C}}\setminus{\{0\}}}$
for each $(\sigma,\lambda)\in{\mathbb{R}}\setminus{\mathbb{R_{-}}}\times\Upsilon$. Hence, $0$ is the zero eigenvalue of (\ref{eq28}).
\end{proof}

Then we get the following results from Lemma \ref{le5} and Theorem \ref{th2.1}.

\begin{theorem}\label{th3.1}
Suppose that $(A0)$, $(A3)$, (\ref{eq12}) and (\ref{eq13}) are satisfied.
\begin{itemize}
\item[(i)] If $\kappa\neq{0}$, then there exists a constant $\varepsilon>0$ such that for any $\lambda\in\Upsilon=(\lambda_{1}-\varepsilon,\lambda_{1})\cup(\lambda_{1},\lambda_{1}+\varepsilon)$,
model (\ref{eq3}) has exactly a spatially nonhomogeneous positive steady state solution, and $0$ is not the zero eigenvalue of the linearized equation (\ref{eq28}) of (\ref{eq3}).

\item[(ii)] If $\kappa={0}$
and $T_{u{u}}[\varphi_{1},\varphi_{1}]\neq{0}$ and $\varrho\nu<0$
\text{$($resp., $\varrho\nu>0$$)$}, then there exist a constant $\varepsilon>0$
such that for any $\lambda\in\Upsilon=(\lambda_{1},\lambda_{1}+\varepsilon)$
\text{$($resp., $\lambda\in\Upsilon=(\lambda_{1}-\varepsilon,\lambda_{1})$$)$}, model (\ref{eq3}) has exactly a spatially nonhomogeneous positive steady state solutions,
and $0$ is a zero eigenvalue of the linearized equation (\ref{eq28}) of (\ref{eq3}).
\end{itemize}
\end{theorem}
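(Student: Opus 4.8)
The plan is to read off both statements by feeding the existence results of Theorem \ref{th2.1} into the zero-eigenvalue dichotomy of Lemma \ref{le5}, supplementing them with a sign analysis of the bifurcating branch to secure positivity and uniqueness. First I would recall the one-to-one correspondence, set up in Section 2.1, between solutions of $T(u,\lambda)=0$ in a neighborhood of $(0,\lambda_1)$ and zeros of the reduced scalar function $G(\vartheta,\lambda)$ in (\ref{eq19}). In case (i), $\kappa\neq 0$ lets the implicit function theorem produce a unique branch $\vartheta_\lambda=2\varrho(\lambda_1-\lambda)/\kappa+o(|\lambda-\lambda_1|)$ and hence a unique nontrivial steady state $u_\lambda^*=\vartheta_\lambda\varphi_1+h(\vartheta_\lambda\varphi_1,\lambda)$ for $\lambda$ near $\lambda_1$; in case (ii), $\kappa=0$ forces the saddle-node picture, so the bracket of (\ref{eq19}) vanishes along $\vartheta^2=-6\varrho(\lambda-\lambda_1)/\nu$, giving exactly two branches $\vartheta_\lambda^{\pm}$ on the side of $\lambda_1$ selected by the sign of $\varrho\nu$.

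To isolate the positive, spatially nonhomogeneous solution I would use that the leading term is $\vartheta_\lambda\varphi_1$ with $\varphi_1>0$ the principal eigenfunction of (\ref{eq11}), while $h$ only contributes higher-order corrections; consequently, for $\lambda$ close enough to $\lambda_1$ the sign of $u_\lambda^*$ is dictated by the sign of $\vartheta_\lambda$, and $u_\lambda^*$ is nonconstant because $\varphi_1$ is the nonconstant principal eigenfunction of the spatially heterogeneous problem. In case (i) this singles out the subinterval $\Upsilon$ of $(\lambda_1-\varepsilon,\lambda_1)\cup(\lambda_1,\lambda_1+\varepsilon)$ on which $\vartheta_\lambda>0$, matching Remark \ref{remark1} and yielding exactly one positive steady state. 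In case (ii) the two saddle-node branches $\vartheta_\lambda^{\pm}$ carry opposite signs, so precisely the positive one survives the restriction to positive solutions, again giving a unique positive steady state on the stated interval.

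The eigenvalue assertion is then immediate from Lemma \ref{le5}: in case (i), $\kappa\neq 0$ and the contrapositive of Lemma \ref{le5}(i) rule out $0$ as an eigenvalue of (\ref{eq28}), whereas in case (ii) the hypotheses $\kappa=0$ and $T_{uu}[\varphi_1,\varphi_1]\neq 0$ are exactly those of Lemma \ref{le5}(ii), which delivers $0$ as a zero eigenvalue. The hard part will not be a deep estimate but the bookkeeping around positivity and uniqueness: I must check that the remainder $h$ cannot overturn the sign set by $\vartheta_\lambda\varphi_1$ near the bifurcation point, and that after discarding the negative branch (as flagged in Remark \ref{remark1}) exactly one positive branch remains in each case.
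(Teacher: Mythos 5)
Your proposal is correct and matches the paper's own route: the paper states Theorem \ref{th3.1} precisely as a consequence of combining the branch structure of Theorem \ref{th2.1} (via the reduced equation $G(\vartheta,\lambda)=0$ and the positivity selection recorded in Remark \ref{remark1}) with the zero-eigenvalue dichotomy of Lemma \ref{le5}, applying the contrapositive of Lemma \ref{le5}(i) when $\kappa\neq 0$ and Lemma \ref{le5}(ii) when $\kappa=0$, $T_{uu}[\varphi_{1},\varphi_{1}]\neq 0$. Your extra bookkeeping on signs (the $W^{2,p}\hookrightarrow C^{1}$ control of $h$ so that $\vartheta_{\lambda}\varphi_{1}$ dictates the sign, and nonconstancy of $\varphi_{1}$ forced by (\ref{eq12})) is exactly what the paper leaves implicit, so no gap remains.
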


In the following, we are about to find the existence of purely imaginary eigenvalues of the linearized equation (\ref{eq28}) of (\ref{eq3}).
Note that for each $(\sigma,\lambda)\in{\mathbb{R_{+}}}\times\Upsilon$, $\mu=i\omega (\omega>0)$  is an eigenvalue of (\ref{eq28}) with eigenfunction $\psi$
if and only if there exists any $(\lambda,\omega,\sigma,\psi)\in\Upsilon\times{\mathbb{R_{+}}}\times{\mathbb{R_{+}}}\times{\mathcal{X}_{\mathbb{C}}\setminus{\{0\}}}$
such that $m(\lambda,i\omega,\sigma)\psi=0$. For later convenient analysis, unless otherwise specified, we always suppose that $\kappa\neq{0}$.

It follows from Lemma \ref{le4} that we let $\mu=i\omega=i\vartheta_{\lambda}^{*}\delta$, then we rewrite $m(\lambda,i\vartheta_{\lambda}^{*}\delta,\sigma)\psi=0$
as $M(\lambda,\delta,\theta)\psi=0$, where $M(\lambda,\delta,\theta): {\mathcal{X}_{\mathbb{C}}}\rightarrow{\mathcal{Y}_{\mathbb{C}}}$ is denoted by
\begin{small}
\begin{equation}\label{eq39}
M(\lambda,\delta,\theta)\psi=
\left(\begin{array}{c}
   \triangle{\psi}+d\nabla\cdot({\psi}\nabla{u_{\lambda}^{*}})+d\nabla\cdot({u_{\lambda}^{*}}\nabla{\psi})e^{-i\theta}
    +{\lambda}u_{\lambda}^{*}f_{u}(x,u_{\lambda}^{*})\psi+{\lambda}f(x,u_{\lambda}^{*})\psi-i\vartheta_{\lambda}^{*}\delta{\psi}\\
   \partial_{\overrightarrow{n}}\psi-{\lambda}r(x)g_{u}(u_{\lambda}^{*})\psi
\end{array}\right).
\end{equation}
\end{small}

Note that $M(\lambda_{1},\delta,\theta)=\mathcal{L}_{\lambda_{1}}$, and the decomposition of space $\mathcal{X}_{\mathbb{C}}=(\mathcal{N}(\mathcal{L}_{\lambda_{1}}))_{\mathbb{C}}\oplus({\mathcal{X}_{1}})_{\mathbb{C}}$,
and $\mathcal{N}(\mathcal{L}_{\lambda_{1}})=\text{span}\{\varphi_{1}\}$. After ignoring a scalar factor, $\psi$ is represented as
$\psi=\varphi_{1}+\vartheta_{\lambda}^{*}\beta$ with $\beta\in({\mathcal{X}_{1}})_{\mathbb{C}}$. Further, we get
$E(\beta,\delta,\theta,\lambda)=0$, where $E: ({\mathcal{X}_{1}})_{\mathbb{C}}\times{\mathbb{R}}\times[0,2\pi)\times\Upsilon\rightarrow{\mathcal{Y}_{\mathbb{C}}}$ is given by
\begin{eqnarray}\label{eq40}
E(\beta,\delta,\theta,\lambda)=\frac{1}{\vartheta_{\lambda}^{*}}M(\lambda,\delta,\theta)\varphi_{1}+M(\lambda,\delta,\theta)\beta,
\end{eqnarray}
in particular, we have $E(\beta,\delta,\theta,\lambda_{1})=P(\delta,\theta)+\mathcal{L}_{\lambda_{1}}\beta$, where $P(\delta,\theta)=\displaystyle\lim_{\lambda\rightarrow \lambda_{1}}\frac{1}{\vartheta_{\lambda}^{*}}M(\lambda,\delta,\theta)\varphi_{1}$,
$i.e.$,
\begin{eqnarray}\label{eq41}
\begin{split}
P(\delta,\theta)
=&
\left(\begin{array}{c}
   d\nabla\cdot({\varphi_{1}}\nabla{\varphi_{1}})+d\nabla\cdot({\varphi_{1}}\nabla{\varphi_{1}})e^{-i\theta}
    +2{\lambda_{1}}f_{u}(x,0){\varphi_{1}}^{2}-i\delta{\varphi_{1}}\\
   -{\lambda_{1}}r(x)g_{uu}(0){\varphi_{1}}^{2}
\end{array}\right)\\
&+
\left(\begin{array}{c}
   f(x,0){\varphi_{1}}\\
   -r(x)g_{u}(0){\varphi_{1}}
\end{array}\right)
\displaystyle\lim_{\lambda\rightarrow \lambda_{1}}\frac{\lambda-\lambda_{1}}{\vartheta_{\lambda}^{*}}
:=\left(\begin{array}{c}
   P_{1}(\delta,\theta)\\
   P_{2}(\delta,\theta)
\end{array}\right).
\end{split}
\end{eqnarray}

Throughout the remaining part of this section, we always suppose that
\begin{eqnarray}\label{eq42}
   4\kappa_{0}=\kappa\neq{0}  ~~\text{or}~~  \kappa(4\kappa_{0}- \kappa)>0,
\end{eqnarray}
which implies that $\kappa\neq{0}$, where
\begin{eqnarray*}
    \kappa_{0}=d\int_{\Omega}\varphi_{1}(x)\nabla\cdot({\varphi_{1}(x)}\nabla{\varphi_{1}(x)})dx.
\end{eqnarray*}

Define $F: {\mathbb{R}}\times[0,2\pi)\rightarrow{\mathbb{R}}$ as $F(\delta,\theta)=\langle{\Psi,P(\delta,\theta)}\rangle_{1}$. Further, we get
\begin{eqnarray}\label{eq43}
\begin{split}
F(\delta,\theta)=\int_{\Omega}\varphi_{1}P_{1}dx-\int_{\partial{\Omega}}\varphi_{1}P_{2}dS=-i\delta+\frac{\kappa}{2}-\kappa_{0}+e^{-i\theta}\kappa_{0}.
\end{split}
\end{eqnarray}

By the direct calculation, we obtain that $F(\delta_{*},\theta_{*})=0$, where
\begin{eqnarray}\label{eq44}
\delta_{*}=\frac{sgn\{{\vartheta_{\lambda}^{*}}\}\sqrt{\kappa(4\kappa_{0}- \kappa)}}{2},     \qquad  \theta_{*}=Arg\frac{-i\delta+\frac{\kappa}{2}-\kappa_{0}}{\kappa_{0}}.
\end{eqnarray}

In view of $F(\delta_{*},\theta_{*})=\langle{\Psi,P(\delta_{*},\theta_{*})}\rangle_{1}=0$, we have $P(\delta_{*},\theta_{*})\in\mathcal{R}(\mathcal{L}_{\lambda_{1}})$,
which means that there exists $\beta_{*}\in({\mathcal{X}_{1}})_{\mathbb{C}}\setminus{\{0\}}$ such that
$P(\delta_{*},\theta_{*})=-\mathcal{L}_{\lambda_{1}}\beta_{*}$, $i.e.$,
$E(\beta_{*},\delta_{*},\theta_{*},\lambda_{1})=0$. It follows from the implicit function theorem and the fact that $D_{(\beta,\delta,\theta)}E(\beta_{*},\delta_{*},\theta_{*},\lambda_{1})$ is a bijective that there exists a continuously differentiable mapping
$\lambda\mapsto{(\beta(\lambda),\delta(\lambda),\theta(\lambda))}$ from $\Upsilon$ to $({\mathcal{X}_{1}})_{\mathbb{C}}\times{\mathbb{R}}\times[0,2\pi)$ such that $(\beta(\lambda_{1}),\delta(\lambda_{1}),\theta(\lambda_{1}))=(\beta_{*},\delta_{*},\theta_{*})$, and $E(\beta(\lambda),\delta(\lambda),\theta(\lambda),\lambda)\equiv{0}$ for all $\lambda\in\Upsilon$.
Therefore, this means that $M(\lambda,\delta(\lambda),\theta(\lambda))\psi(\lambda)=0$ has a nontrivial solution $\psi(\lambda)=\varphi_{1}+\vartheta_{\lambda}^{*}\beta(\lambda)\in{\mathcal{X}_{\mathbb{C}}\setminus{\{0\}}}$
for each $\lambda\in\Upsilon$. Based on the above discussions, we have the following result.

\begin{col}\label{Cor1}
Suppose that $(A0)$, $(A3)$, (\ref{eq12}), (\ref{eq13}) and (\ref{eq42}) are satisfied,
then for each fixed $\lambda\in\Upsilon$, $\mu=i\omega(\omega>0)$  is a purely imaginary eigenvalue of (\ref{eq28}) with eigenfunction $\psi$
if and only if
\begin{eqnarray*}
\omega:=\omega(\lambda)=\vartheta_{\lambda}^{*}\delta(\lambda), ~~~~~~\sigma:=\sigma_{\lambda,n}=\frac{\theta(\lambda)+2n\pi}{\omega(\lambda)}, ~~n\in\mathbb{N}_{0},\\
\psi=c\psi(\lambda),  ~~~~~~~~~\psi(\lambda)=\varphi_{1}+\vartheta_{\lambda}^{*}\beta(\lambda), ~~~~~~~ \text{for any $\lambda\in\Upsilon$},
\end{eqnarray*}
where $c$ is a nonzero constant, $\delta(\lambda)$, $\theta(\lambda)$ and $\beta(\lambda)$ are denoted by the above discussions.
\end{col}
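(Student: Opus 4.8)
The plan is to recognize the corollary as the translation, back into the original variables $(\omega,\sigma)$, of the implicit-function-theorem branch $\lambda\mapsto(\beta(\lambda),\delta(\lambda),\theta(\lambda))$ constructed just above, together with a uniqueness argument supplying the converse. The key correspondence is that, with $\mu=i\omega$, solving $m(\lambda,i\omega,\sigma)\psi=0$ is equivalent to solving $M(\lambda,\delta,\theta)\psi=0$ under the substitutions $\omega=\vartheta_{\lambda}^{*}\delta$ --- admissible because Lemma \ref{le4} keeps $\delta=\omega/\vartheta_{\lambda}^{*}$ bounded on $\Upsilon$ --- and $e^{-i\omega\sigma}=e^{-i\theta}$, that is $\theta\equiv\omega\sigma\pmod{2\pi}$. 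Since $\theta(\lambda)$ is normalized to lie in $[0,2\pi)$, unwinding this congruence produces exactly the discrete family $\sigma=\sigma_{\lambda,n}=(\theta(\lambda)+2n\pi)/\omega(\lambda)$, $n\in\mathbb{N}_0$, which is the source of the index $n$ in the statement.

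For the sufficiency (\emph{if}) direction I would simply invoke the branch already produced: the relation $E(\beta(\lambda),\delta(\lambda),\theta(\lambda),\lambda)\equiv 0$ is equivalent to $M(\lambda,\delta(\lambda),\theta(\lambda))\psi(\lambda)=0$ with $\psi(\lambda)=\varphi_{1}+\vartheta_{\lambda}^{*}\beta(\lambda)\neq 0$, and translating back through the two substitutions gives $m(\lambda,i\omega(\lambda),\sigma_{\lambda,n})\psi(\lambda)=0$ for every $n\in\mathbb{N}_0$. Linearity and homogeneity of the eigenvalue problem then show that $c\psi(\lambda)$ is an admissible eigenfunction for any nonzero constant $c$, which is precisely the asserted form.

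The substance lies in the necessity (\emph{only if}) direction. Given a purely imaginary eigenvalue $i\omega$ with $\omega>0$ and eigenfunction $\psi$ for some $\lambda\in\Upsilon$, I would set $\delta=\omega/\vartheta_{\lambda}^{*}$ and $\theta=\omega\sigma\bmod 2\pi$, decompose $\psi=\alpha\varphi_{1}+\vartheta_{\lambda}^{*}\beta$ with $\beta\in(\mathcal{X}_{1})_{\mathbb{C}}$, and first rule out the degenerate case $\alpha=0$: if $\alpha=0$ then, after normalizing $\beta$, the equation reduces to $M(\lambda,\delta,\theta)\hat\beta=0$ with $\|\hat\beta\|=1$ in $(\mathcal{X}_{1})_{\mathbb{C}}$, and letting $\lambda\to\lambda_{1}$, where $M(\lambda_{1},\delta,\theta)=\mathcal{L}_{\lambda_{1}}$ is injective on $(\mathcal{X}_{1})_{\mathbb{C}}$, forces $\hat\beta\to 0$, a contradiction. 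With $\alpha\neq 0$ we normalize $\alpha=1$, so that $(\beta,\delta,\theta)$ solves $E(\beta,\delta,\theta,\lambda)=0$; passing to the limit $\lambda\to\lambda_{1}$ forces the limiting parameters to satisfy $F=0$, and the sign constraint $\omega>0$ together with $\theta\in[0,2\pi)$ singles them out as $(\delta_{*},\theta_{*})$. Finally the bijectivity of $D_{(\beta,\delta,\theta)}E(\beta_{*},\delta_{*},\theta_{*},\lambda_{1})$ gives, via the implicit function theorem, local uniqueness of the zero set of $E(\cdot,\cdot,\cdot,\lambda)$ near $(\beta_{*},\delta_{*},\theta_{*})$, so $(\beta,\delta,\theta)=(\beta(\lambda),\delta(\lambda),\theta(\lambda))$ and hence $\psi$ is a nonzero scalar multiple of $\psi(\lambda)$, with $\omega=\omega(\lambda)$ and $\sigma\in\{\sigma_{\lambda,n}:n\in\mathbb{N}_0\}$.

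The main obstacle I anticipate is exactly this necessity step: the implicit function theorem only delivers uniqueness in a neighborhood of $(\beta_{*},\delta_{*},\theta_{*})$, so one must use the uniform a priori bounds of Lemmas \ref{le3} and \ref{le4} --- controlling $\|\nabla\psi_{\lambda}\|$ and $|\mu_{\lambda}/\vartheta_{\lambda}^{*}|$ --- to guarantee that, after shrinking $\Upsilon$, every admissible triple $(\beta,\delta,\theta)$ is driven into that neighborhood as $\lambda\to\lambda_{1}$, whereupon local uniqueness closes the argument. The non-degeneracy hypothesis (\ref{eq42}) enters precisely here, keeping $\kappa\neq 0$ and $\delta_{*}$ real so that the reduced scalar equation $F=0$ has the isolated solution $(\delta_{*},\theta_{*})$ and the linearization $D_{(\beta,\delta,\theta)}E$ is a bijection.
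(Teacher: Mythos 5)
Your proposal is correct and follows essentially the same route as the paper: Corollary \ref{Cor1} is obtained there exactly through the substitutions $\mu=i\vartheta_{\lambda}^{*}\delta$ and $e^{-\mu\sigma}=e^{-i\theta}$, the decomposition $\psi=\varphi_{1}+\vartheta_{\lambda}^{*}\beta$ with $\beta\in({\mathcal{X}_{1}})_{\mathbb{C}}$, the reduced scalar equation $F(\delta,\theta)=0$ with solution $(\delta_{*},\theta_{*})$ under (\ref{eq42}), and the implicit-function-theorem branch $\lambda\mapsto(\beta(\lambda),\delta(\lambda),\theta(\lambda))$, the corollary being stated as an immediate consequence of that discussion. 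The only difference is that the paper leaves the necessity direction implicit in ``the above discussions,'' whereas you spell it out---ruling out $\alpha=0$ via injectivity of $\mathcal{L}_{\lambda_{1}}$ on $({\mathcal{X}_{1}})_{\mathbb{C}}$, and using the a priori bounds of Lemmas \ref{le3} and \ref{le4} to drive any admissible triple into the uniqueness neighborhood of the implicit function theorem as $\lambda\to\lambda_{1}$---which is a faithful elaboration of the paper's own argument rather than a different route.
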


In the following, for the case that $(A0)$, $(A3)$, (\ref{eq12}), (\ref{eq13}) and (\ref{eq42}) are satisfied,
we shall investigate the linear stability and the existence of Hopf bifurcation of the positive spatially nonhomogeneous steady state of (\ref{eq28})
as $\lambda\in\Upsilon$. Now some estimates and necessary results are given for later analysis.

\begin{lemma}\label{le6}
Suppose that $(A0)$, $(A3)$, (\ref{eq12}), (\ref{eq13}) and (\ref{eq42}) are satisfied,
then $\displaystyle\lim_{\lambda\rightarrow{\lambda_{1}}}\Xi_{n}(\lambda)\neq{0}$ for $\lambda\in \Upsilon$ and $n\in\mathbb{N}_{0}$, where
\begin{eqnarray}\label{eq45}
\begin{split}
\Xi_{n}(\lambda):=\int_{\Omega}\psi(\lambda)\Big[\psi(\lambda)+\sigma_{\lambda,n}d\nabla\cdot(u_{\lambda}^{*}\nabla{\psi(\lambda)})e^{-i\theta(\lambda)}\Big]dx.
\end{split}
\end{eqnarray}
\end{lemma}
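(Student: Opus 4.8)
The plan is to evaluate $\lim_{\lambda\to\lambda_1}\Xi_n(\lambda)$ explicitly from the known asymptotics of its ingredients and to show that the resulting complex number has a nonzero imaginary part. Recall from Section 3 and Corollary \ref{Cor1} that $\psi(\lambda)=\varphi_{1}+\vartheta_{\lambda}^{*}\beta(\lambda)$ with $\beta(\lambda)\to\beta_{*}$ in $(\mathcal{X}_{1})_{\mathbb{C}}$, that $\vartheta_{\lambda}^{*}\to0$, $\theta(\lambda)\to\theta_{*}$, $\delta(\lambda)\to\delta_{*}$, and that $u_{\lambda}^{*}=\vartheta_{\lambda}^{*}\varphi_{1}+h(\vartheta_{\lambda}^{*}\varphi_{1},\lambda)$ with $h(0,\lambda)=0$ and $h_{u_{0}}=0$, whence $u_{\lambda}^{*}/\vartheta_{\lambda}^{*}\to\varphi_{1}$. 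Since $p>N$ gives $\mathcal{X}=W^{2,p}\hookrightarrow C^{1}$, both $\psi(\lambda)\to\varphi_{1}$ and $u_{\lambda}^{*}/\vartheta_{\lambda}^{*}\to\varphi_{1}$ hold in $W^{2,p}$, which is what I need to pass to the limit under the integral in (\ref{eq45}).

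The decisive observation is a cancellation between the \emph{large} factor $\sigma_{\lambda,n}$ and the \emph{small} factor $u_{\lambda}^{*}$. By Corollary \ref{Cor1}, $\sigma_{\lambda,n}=(\theta(\lambda)+2n\pi)/(\vartheta_{\lambda}^{*}\delta(\lambda))\to\infty$ as $\lambda\to\lambda_{1}$; but in the memory term it appears multiplied by $u_{\lambda}^{*}=O(\vartheta_{\lambda}^{*})$, so the two powers of $\vartheta_{\lambda}^{*}$ cancel,
\begin{eqnarray*}
\sigma_{\lambda,n}\,d\,\nabla\cdot\big(u_{\lambda}^{*}\nabla\psi(\lambda)\big)e^{-i\theta(\lambda)}
=\frac{\theta(\lambda)+2n\pi}{\delta(\lambda)}\,d\,e^{-i\theta(\lambda)}\,\nabla\cdot\Big(\tfrac{u_{\lambda}^{*}}{\vartheta_{\lambda}^{*}}\nabla\psi(\lambda)\Big)
\longrightarrow\frac{\theta_{*}+2n\pi}{\delta_{*}}\,d\,e^{-i\theta_{*}}\,\nabla\cdot(\varphi_{1}\nabla\varphi_{1}),
\end{eqnarray*}
the limit taken in $L^{p}(\Omega)$ via the product rule $\nabla\cdot(a\nabla b)=\nabla a\cdot\nabla b+a\Delta b$ together with the $W^{2,p}$-convergences above. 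Inserting this and $\psi(\lambda)\to\varphi_{1}$ into (\ref{eq45}), and using the normalization $\int_{\Omega}\varphi_{1}^{2}\,dx=1$ and the definition of $\kappa_{0}$, gives
\begin{eqnarray*}
\lim_{\lambda\to\lambda_{1}}\Xi_{n}(\lambda)=1+\frac{(\theta_{*}+2n\pi)\,\kappa_{0}\,e^{-i\theta_{*}}}{\delta_{*}}.
\end{eqnarray*}

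To identify this number I would eliminate $e^{-i\theta_{*}}\kappa_{0}$ using the defining relation $F(\delta_{*},\theta_{*})=0$ from (\ref{eq43})--(\ref{eq44}), namely $e^{-i\theta_{*}}\kappa_{0}=i\delta_{*}+\kappa_{0}-\kappa/2$. Substituting yields
\begin{eqnarray*}
\lim_{\lambda\to\lambda_{1}}\Xi_{n}(\lambda)=1+i(\theta_{*}+2n\pi)+\frac{(\theta_{*}+2n\pi)(\kappa_{0}-\kappa/2)}{\delta_{*}},
\end{eqnarray*}
whose imaginary part is exactly $\theta_{*}+2n\pi$. It therefore suffices to check $\theta_{*}+2n\pi\neq0$. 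Writing $z=(-i\delta_{*}+\kappa/2-\kappa_{0})/\kappa_{0}$, with real denominator $\kappa_{0}$, one has $\mathrm{Im}\,z=-\delta_{*}/\kappa_{0}$, so $\theta_{*}=\mathrm{Arg}\,z=0$ forces $\delta_{*}=0$; under the generic part $\kappa(4\kappa_{0}-\kappa)>0$ of (\ref{eq42}) we have $\delta_{*}\neq0$, hence $\theta_{*}\in(0,2\pi)$ and $\theta_{*}+2n\pi>0$ for every $n\in\mathbb{N}_{0}$, so the limit has nonzero imaginary part and is nonzero.

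The hard part will be the boundary subcase $4\kappa_{0}=\kappa$ of (\ref{eq42}), where $\delta_{*}=0$ and $\theta_{*}=0$: then the division by $\delta_{*}$ above is illegitimate, $\sigma_{\lambda,n}$ is no longer controlled by $1/\vartheta_{\lambda}^{*}$ alone, and the clean cancellation breaks down. I would treat this case by keeping $\delta(\lambda)$ unexpanded and estimating $\Xi_{n}(\lambda)$ through the rate at which $\delta(\lambda)\to0$; alternatively, since $\omega(\lambda)=\vartheta_{\lambda}^{*}\delta(\lambda)\to0$ there, this degenerate situation can be excluded from the Hopf analysis and the lemma applied only under $\kappa(4\kappa_{0}-\kappa)>0$. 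Apart from this degeneracy, the only routine point to verify carefully is the $L^{p}$-convergence of $\nabla\cdot((u_{\lambda}^{*}/\vartheta_{\lambda}^{*})\nabla\psi(\lambda))$, which follows from $u_{\lambda}^{*}/\vartheta_{\lambda}^{*}\to\varphi_{1}$ and $\psi(\lambda)\to\varphi_{1}$ in $W^{2,p}$ through the embedding $W^{1,p}\hookrightarrow C^{0}$ for the gradient product and $W^{2,p}$-convergence for the Laplacian term.
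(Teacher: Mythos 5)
Your argument is essentially the paper's own proof: the same cancellation $\vartheta_{\lambda}^{*}\sigma_{\lambda,n}\to(\theta_{*}+2n\pi)/\delta_{*}$ giving the limit $1+\frac{\theta_{*}+2n\pi}{\delta_{*}}e^{-i\theta_{*}}\kappa_{0}$ as in (\ref{eq47}), the same elimination of $e^{-i\theta_{*}}\kappa_{0}$ via $F(\delta_{*},\theta_{*})=0$, and the same conclusion from the imaginary part $\theta_{*}+2n\pi\neq0$ (your extra care about $W^{2,p}$-convergence merely firms up the paper's appeal to dominated convergence). The one divergence is the borderline alternative $4\kappa_{0}=\kappa$ in (\ref{eq42}), which you rightly flag as degenerate ($\delta_{*}=0$) and propose to exclude: note that the paper does not genuinely handle it either, since its dichotomy (``if $\theta_{*}+2n\pi=0$ then the real part is nonzero'') still presupposes $\delta_{*}\neq0$, and under $F(\delta_{*},\theta_{*})=0$ with $\kappa\neq0$ one has $\theta_{*}\neq0$, so that branch is vacuous exactly where it would be needed; your caution there is therefore a legitimate observation about the paper, not a gap relative to it.
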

\begin{proof}
It follows from $\vartheta_{\lambda}^{*}(\lambda_{1})=0$ and
$\displaystyle\lim_{\lambda\rightarrow \lambda_{1}}\frac{u_{\lambda}^{*}}{\vartheta_{\lambda}^{*}}=\varphi_{1}$
and Corollary \ref{Cor1} that
\begin{eqnarray}\label{eq46}
\theta(\lambda)\mapsto\theta_{*},  \quad
\delta(\lambda)\mapsto\delta_{*},  \quad
\vartheta_{\lambda}^{*}\sigma_{\lambda,n}\mapsto\frac{\theta_{*}+2n\pi}{\delta_{*}}, \quad
\psi(\lambda)\mapsto{\varphi_{1}}, \quad
\text{in ${\mathcal{X}_{\mathbb{C}}}$ as $\lambda\mapsto{\lambda_{1}}$}.
\end{eqnarray}

Together with (\ref{eq46}), it follows from the dominated convergence theorem and $F(\delta_{*},\theta_{*})=0$ that
\begin{eqnarray}\label{eq47}
\begin{split}
\displaystyle\lim_{\lambda\rightarrow{\lambda_{1}}}\Xi_{n}(\lambda)
=&\int_{\Omega}{\varphi_{1}^{2}}dx+
\frac{\theta_{*}+2n\pi}{\delta_{*}}e^{-i\theta_{*}}d\int_{\Omega}\varphi_{1}\nabla\cdot({\varphi_{1}}\nabla{\varphi_{1}})dx\\
=&1+\frac{\theta_{*}+2n\pi}{\delta_{*}}e^{-i\theta_{*}}\kappa_{0}\\
=&1+\frac{\theta_{*}+2n\pi}{\delta_{*}}\Big[\kappa_{0}-\frac{\kappa}{2}+i\delta_{*}\Big]\neq{0},
\end{split}
\end{eqnarray}
since if ${\theta_{*}+2n\pi}=0$, then $\displaystyle\lim_{\lambda\rightarrow{\lambda_{1}}}Re\{\Xi_{n}(\lambda)\}\neq{0}$;
if ${\theta_{*}+2n\pi}\neq{0}$, then $\displaystyle\lim_{\lambda\rightarrow{\lambda_{1}}}Im\{\Xi_{n}(\lambda)\}\neq{0}$.
\end{proof}

Then we give the fact that the pure imaginary eigenvalue is simple.

\begin{lemma}\label{le7}
Suppose that $(A0)$, $(A3)$, (\ref{eq12}), (\ref{eq13}) and (\ref{eq42}) are satisfied,
then for $\lambda\in \Upsilon$, $\mu=i\omega(\lambda)$ is a simple eigenvalue of (\ref{eq28}) when $\sigma=\sigma_{\lambda,n}(n\in\mathbb{N}_{0})$
, where $\omega(\lambda)$ and $\sigma_{\lambda,n}$ are given by in Corollary \ref{Cor1}.
\end{lemma}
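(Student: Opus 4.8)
The plan is to show that $\mu=i\omega(\lambda)$ is both geometrically and algebraically simple for the infinitesimal generator of the solution semigroup of (\ref{eq28}), reducing algebraic simplicity to the nonvanishing of the quantity $\Xi_{n}(\lambda)$ furnished by Lemma \ref{le6}. First I would dispose of geometric simplicity. By the construction preceding Corollary \ref{Cor1}, $\psi(\lambda)=\varphi_{1}+\vartheta_{\lambda}^{*}\beta(\lambda)$ lies in $\mathcal{N}(m(\lambda,i\omega,\sigma_{\lambda,n}))$. To see this null space is one-dimensional I would use the decomposition $\mathcal{X}_{\mathbb{C}}=(\mathcal{N}(\mathcal{L}_{\lambda_{1}}))_{\mathbb{C}}\oplus({\mathcal{X}_{1}})_{\mathbb{C}}$ together with the observation that $m(\lambda_{1},i\omega,\sigma)=\mathcal{L}_{\lambda_{1}}$ (since $u_{\lambda_{1}}^{*}=0$ and $\omega(\lambda_{1})=0$): because $\mathcal{L}_{\lambda_{1}}$ is invertible on $({\mathcal{X}_{1}})_{\mathbb{C}}$, the restriction of $m(\lambda,i\omega,\sigma_{\lambda,n})$ to $({\mathcal{X}_{1}})_{\mathbb{C}}$ remains injective for $\lambda$ near $\lambda_{1}$; hence every kernel element has a nonzero $\varphi_{1}$-component and is a scalar multiple of $\psi(\lambda)$.

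For algebraic simplicity I would pass to the generator $\mathcal{A}_{\sigma}$ on the history space and rule out a second-level generalized eigenvector, following the functional-differential framework of \cite{1996-wujianhong-pfde,1983-A.Pazy}. Writing the eigenfunction of $\mathcal{A}_{\sigma}$ as $q(s)=e^{i\omega s}\psi(\lambda)$ on $[-\sigma_{\lambda,n},0]$, solving $(\mathcal{A}_{\sigma}-i\omega)\phi=q$ yields $\phi(s)=e^{i\omega s}(s\,\psi(\lambda)+\eta_{0})$, and the boundary relation of the generator at $s=0$ forces the delayed memory term to contribute $-\sigma_{\lambda,n}d\,e^{-i\omega\sigma_{\lambda,n}}\nabla\cdot(u_{\lambda}^{*}\nabla\psi(\lambda))$, so that a generalized eigenvector exists if and only if
\begin{equation}
m(\lambda,i\omega,\sigma_{\lambda,n})\phi
=\left(\begin{array}{c}
\psi(\lambda)+\sigma_{\lambda,n}d\,e^{-i\theta(\lambda)}\nabla\cdot(u_{\lambda}^{*}\nabla\psi(\lambda))\\
0
\end{array}\right)
\end{equation}
is solvable for $\eta_{0}=\phi\in\mathcal{X}_{\mathbb{C}}$, where I have used $\omega\sigma_{\lambda,n}=\theta(\lambda)+2n\pi$ from Corollary \ref{Cor1} to replace $e^{-i\omega\sigma_{\lambda,n}}$ by $e^{-i\theta(\lambda)}$. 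Thus $\mu=i\omega$ is algebraically simple precisely when this right-hand side fails to lie in $\mathcal{R}(m(\lambda,i\omega,\sigma_{\lambda,n}))$.

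Since $m(\lambda,i\omega,\sigma_{\lambda,n})$ is Fredholm of index zero with a one-dimensional cokernel spanned by some $\ell_{\lambda}$, solvability is equivalent to the vanishing of $\ell_{\lambda}$ applied to the right-hand side. The decisive point is that the first component of this right-hand side is exactly the bracketed integrand of $\Xi_{n}(\lambda)$ in (\ref{eq45}) while its boundary component is zero, and that as $\lambda\to\lambda_{1}$ the operator degenerates to the formally self-transpose $\mathcal{L}_{\lambda_{1}}$, whose cokernel functional is precisely $\langle{\Psi,\cdot}\rangle_{1}$ with representative $\varphi_{1}$. Hence the genuine solvability functional $\langle{\ell_{\lambda},\cdot}\rangle$ and $\langle{\Psi,\cdot}\rangle_{1}$ applied to the right-hand side share the common limit $\displaystyle\lim_{\lambda\to\lambda_{1}}\Xi_{n}(\lambda)$, which is nonzero by Lemma \ref{le6}. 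By continuity of $\lambda\mapsto\ell_{\lambda}$ and of the right-hand side, after shrinking $\Upsilon$ the solvability functional stays nonzero on $\Upsilon$, so the equation above has no solution and $\mu=i\omega(\lambda)$ is algebraically simple, hence simple.

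I expect the main obstacle to be twofold. First, one must derive the generalized-eigenvector equation carefully so that the history variable passed through $\mathcal{A}_{\sigma}$ at $s=0$ contributes exactly the term $\sigma_{\lambda,n}d\,e^{-i\theta(\lambda)}\nabla\cdot(u_{\lambda}^{*}\nabla\psi(\lambda))$. Second, and more delicate, is justifying that the solvability functional for the operator $m(\lambda,i\omega,\sigma_{\lambda,n})$ is controlled by $\Xi_{n}(\lambda)$: because $m$ is \emph{not} formally self-transpose once $u_{\lambda}^{*}\neq0$, one cannot simply test against $\psi(\lambda)$, and the honest route is the continuity/limiting argument anchored at $\lambda_{1}$, where the self-transposeness of $\mathcal{L}_{\lambda_{1}}$ makes $\varphi_{1}$ the cokernel representative and identifies the limit of the solvability functional with $\displaystyle\lim_{\lambda\to\lambda_{1}}\Xi_{n}(\lambda)\neq0$.
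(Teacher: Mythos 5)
Your proposal is correct, and it reaches the paper's own obstruction---the nonvanishing of $\Xi_{n}(\lambda)$ from Lemma \ref{le6}---but by a genuinely different and in fact more careful route. The paper's proof is the short formal one: it computes the $\mu$-derivative (\ref{eq48}) of the characteristic operator, asserts that a double eigenvalue would force the \emph{pointwise} identity (\ref{eq49}), $-d\sigma_{\lambda,n}\nabla\cdot(u_{\lambda}^{*}\nabla\psi(\lambda))e^{-i\theta(\lambda)}-\psi(\lambda)=0$, and then multiplies by $-\psi(\lambda)$ and integrates to get $\Xi_{n}(\lambda)=0$ in (\ref{eq50}), contradicting Lemma \ref{le6}. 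You do two things differently. First, you prove geometric simplicity by perturbation off $m(\lambda_{1},\cdot,\cdot)=\mathcal{L}_{\lambda_{1}}$, a point the paper's proof never addresses (its contradiction concerns only a Jordan chain rooted at $\psi(\lambda)$ and by itself does not exclude two independent eigenfunctions). Second, you replace the pointwise identity by the correct Jordan-chain condition---that $\bigl(\psi(\lambda)+\sigma_{\lambda,n}d\,e^{-i\theta(\lambda)}\nabla\cdot(u_{\lambda}^{*}\nabla\psi(\lambda)),0\bigr)^{T}$ lies in $\mathcal{R}(m(\lambda,i\omega,\sigma_{\lambda,n}))$---and negate it via the Fredholm alternative. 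Your diagnosis of the delicate point is accurate: because of the term $d\nabla\cdot(\cdot\,\nabla u_{\lambda}^{*})$ the operator is not self-transpose under the unconjugated pairing once $u_{\lambda}^{*}\neq 0$, so testing the chain relation against $\psi(\lambda)$ itself (which is what the paper does, in effect, at (\ref{eq50}) and again at (\ref{eq54})) does not literally annihilate the range; your continuity argument for the cokernel functional $\ell_{\lambda}$, anchored at the self-transpose limit $\mathcal{L}_{\lambda_{1}}$ where the functional is $\langle{\Psi,\cdot}\rangle_{1}$ with representative $\varphi_{1}$, is exactly the device that legitimizes the computation for $\lambda$ near $\lambda_{1}$---the only regime where Lemma \ref{le6} yields information anyway, since it controls only $\displaystyle\lim_{\lambda\rightarrow\lambda_{1}}\Xi_{n}(\lambda)$. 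What each approach buys: the paper's argument is a few lines but heuristic at precisely the two steps just named, while yours is airtight there at the cost of two standard ingredients you assert rather than prove---the dictionary from \cite{1996-wujianhong-pfde} identifying algebraic multiplicity at least two with solvability of the chain equation, and the continuity of $\lambda\mapsto\ell_{\lambda}$ for the Fredholm family with one-dimensional cokernels---both routine to supply.
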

\begin{proof}
Differentiating the left side of (\ref{eq29}) with respect to $\mu$, we get
\begin{eqnarray}\label{eq48}
\begin{split}
-d\sigma\nabla\cdot({u_{\lambda}^{*}}\nabla{\psi})e^{-\mu{\sigma}}-\psi.
\end{split}
\end{eqnarray}

To be contrary, for $\lambda\in \Upsilon$,
$\mu=i\omega(\lambda)$ is at least a double eigenvalue of (\ref{eq28}) when $\sigma=\sigma_{\lambda,n}$.
Then together with (\ref{eq48}) and Corollary \ref{Cor1}, it follows that
$\mu=i\omega(\lambda)$ can satisfy the following equation
\begin{eqnarray}\label{eq49}
\begin{split}
-d\sigma_{\lambda,n}\nabla\cdot({u_{\lambda}^{*}}\nabla{\psi(\lambda)})e^{-i\theta(\lambda)}-\psi(\lambda)=0,
\end{split}
\end{eqnarray}
where $\sigma_{\lambda,n}$, $\psi(\lambda)$ and $\theta(\lambda)$ are defined in Corollary \ref{Cor1}. Multiplying two sides of (\ref{eq49}) by $-\psi(\lambda)$ and integrating over $\Omega$, we yield
\begin{eqnarray}\label{eq50}
\begin{split}
\Xi_{n}(\lambda):=\int_{\Omega}\psi(\lambda)\Big[\psi(\lambda)+\sigma_{\lambda,n}d\nabla\cdot(u_{\lambda}^{*}\nabla{\psi(\lambda)})e^{-i\theta(\lambda)}\Big]dx=0,
\end{split}
\end{eqnarray}
which is a contradiction with Lemma \ref{le6}. Hence, the proof is completed.
\end{proof}

This, together with the above analysis,  Lemma \ref{le7} and the implicit function theorem, implies that there exist a neighborhood
$B_{n}\times{D_{n}}\times{E_{n}}\subset{\mathbb{R}}\times{\mathbb{C}}\times{\mathcal{X}_{\mathbb{C}}}$ of $(\sigma_{\lambda,n},i\omega(\lambda),\psi(\lambda))$ and a continuously
differentiable function $(\mu,\psi):B_{n}\rightarrow{D_{n}}\times{E_{n}}$ such that for each $\sigma\in{B_{n}}$, $\mu(\sigma)\in{D_{n}}$ is the unique
eigenvalue of (\ref{eq28}) associated with the eigenvalue $\psi(\sigma)$, and
\begin{eqnarray}\label{eq51}
\begin{split}
\mu(\sigma_{\lambda,n})=&i\omega(\lambda),  \quad \psi(\sigma_{\lambda,n})=\psi(\lambda), \quad n\in{\mathbb{N}_{0}}\\
\triangle(\lambda,\mu(\sigma),\sigma)\psi(\sigma)
=&\triangle{\psi(\sigma)}+d\nabla\cdot({\psi(\sigma)}\nabla{u_{\lambda}^{*}})
+d\nabla\cdot({u_{\lambda}^{*}}\nabla{\psi(\sigma)})e^{-\mu(\sigma){\sigma}}\\
&+\lambda{u_{\lambda}^{*}}f_{u}(x,{u_{\lambda}^{*}}){\psi(\sigma)}+{\lambda}f(x,{u_{\lambda}^{*}}){\psi(\sigma)}-\mu(\sigma)\psi(\sigma)=0,
\end{split}
\end{eqnarray}
and for all $\psi(\sigma)\in\mathcal{X}_{\mathbb{C}}\setminus{\{0\}}$ satisfying $\partial_{\overrightarrow{n}}\psi(\sigma)={\lambda}r(x)g_{u}(u_{\lambda}^{*})\psi(\sigma)$ on $\partial{\Omega}$.

Now we shall verify the following transversality condition by some skillful calculations.

\begin{lemma}\label{le8}
Suppose that $(A0)$, $(A3)$, (\ref{eq12}), (\ref{eq13}) and (\ref{eq42}) are satisfied. For $\lambda\in \Upsilon$, we have
\begin{equation}\label{eq52}
    \frac{dRe\{\mu(\sigma)\}}{d\sigma}\bigg|_{\sigma=\sigma_{\lambda,n}}>0,   \quad  n\in{\mathbb{N}_{0}}.
\end{equation}
\end{lemma}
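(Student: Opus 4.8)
The plan is to build on the $C^1$ dependence of the simple eigenvalue branch established just before the statement (cf. \eqref{eq51}, which rests on the simplicity in Lemma \ref{le7} and the bijectivity of $D_{(\beta,\delta,\theta)}E$). I would differentiate the characteristic identity $\triangle(\lambda,\mu(\sigma),\sigma)\psi(\sigma)=0$ together with its boundary condition $\partial_{\overrightarrow{n}}\psi(\sigma)=\lambda r(x)g_u(u_\lambda^*)\psi(\sigma)$ with respect to $\sigma$, evaluated at $\sigma=\sigma_{\lambda,n}$. Writing the system as $m(\lambda,\mu(\sigma),\sigma)\psi(\sigma)=0$, the chain rule gives $\big[\partial_\mu m\,\tfrac{d\mu}{d\sigma}+\partial_\sigma m\big]\psi+m[\tfrac{d\psi}{d\sigma}]=0$, where only the memory term $d\nabla\cdot(u_\lambda^*\nabla\psi)e^{-\mu\sigma}$ and the term $-\mu\psi$ depend on $(\mu,\sigma)$. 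At $\mu=i\omega(\lambda)$, $\sigma=\sigma_{\lambda,n}$ one has $e^{-\mu\sigma}=e^{-i\theta(\lambda)}$, so the first components are $\partial_\mu m[\psi]_1=-\sigma_{\lambda,n}d\nabla\cdot(u_\lambda^*\nabla\psi)e^{-i\theta}-\psi$ and $\partial_\sigma m[\psi]_1=-i\omega\,d\nabla\cdot(u_\lambda^*\nabla\psi)e^{-i\theta}$, both with zero boundary component, and in particular $\partial_\sigma m[\psi]_1=\tfrac{i\omega}{\sigma_{\lambda,n}}\big(\partial_\mu m[\psi]_1+\psi\big)$.

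Next I would isolate $\tfrac{d\mu}{d\sigma}$ by applying the bilinear pairing $\int_\Omega\psi(\lambda)\,(\cdot)\,dx$ to the differentiated equation, which is exactly the pairing used to define $\Xi_n(\lambda)$ in \eqref{eq45}. The decisive point is to remove the $m[\tfrac{d\psi}{d\sigma}]$ contribution: because the delay operator together with the advection term $d\nabla\cdot(\psi\nabla u_\lambda^*)$ is \emph{not} self-adjoint, this cannot be done by naive symmetry and must instead be carried out by Green's identities combined with the boundary relation $\partial_{\overrightarrow{n}}\psi=\lambda r(x)g_u(u_\lambda^*)\psi$ and the eigen-identity $\triangle(\lambda,i\omega,\sigma_{\lambda,n})\psi=0$, in the same spirit as \eqref{eq30}. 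Pairing then identifies the denominator $\langle\psi,\partial_\mu m[\psi]\rangle$ with $-\Xi_n(\lambda)$, and the relation above rewrites the numerator as $-\tfrac{i\omega}{\sigma_{\lambda,n}}\big(\Xi_n(\lambda)-\int_\Omega\psi(\lambda)^2\,dx\big)$. This yields the closed expression $\tfrac{d\mu}{d\sigma}=-\tfrac{i\omega}{\sigma_{\lambda,n}}\big(1-\int_\Omega\psi(\lambda)^2\,dx/\Xi_n(\lambda)\big)$, hence $\frac{dRe\{\mu\}}{d\sigma}=-\tfrac{\omega(\lambda)}{\sigma_{\lambda,n}}\,Im\big\{\int_\Omega\psi(\lambda)^2\,dx/\Xi_n(\lambda)\big\}$. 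Lemma \ref{le6} guarantees $\Xi_n\neq0$, so the quotient is well defined.

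Finally, I would fix the sign by letting $\lambda\to\lambda_1$ and using \eqref{eq46}: $\int_\Omega\psi(\lambda)^2\,dx\to\int_\Omega\varphi_1^2\,dx=1$ and, by \eqref{eq47}, $\Xi_n(\lambda)\to 1+\tfrac{\theta_*+2n\pi}{\delta_*}\big(\kappa_0-\tfrac{\kappa}{2}+i\delta_*\big)$, whose imaginary part is precisely $\theta_*+2n\pi$. Since $Im\{1/z\}=-Im\{z\}/|z|^2$ and $\omega/\sigma_{\lambda,n}=\omega^2/(\theta_*+2n\pi)>0$, the limit gives $\frac{dRe\{\mu\}}{d\sigma}\to \tfrac{\omega}{\sigma_{\lambda,n}}\,(\theta_*+2n\pi)/|\lim\Xi_n|^2\ge0$, strictly positive once $\theta_*+2n\pi>0$; continuity of the exact finite-$\lambda$ formula then transfers this strict sign to all $\lambda\in\Upsilon$, $\Upsilon$ being a small one-sided neighborhood of $\lambda_1$. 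Under \eqref{eq42}, the branch $\kappa(4\kappa_0-\kappa)>0$ forces $\delta_*\neq0$, so $-i\delta_*+\tfrac{\kappa}{2}-\kappa_0$ has nonzero imaginary part, $\theta_*\neq0$, and $\theta_*+2n\pi>0$ for every $n\in\mathbb{N}_0$; the borderline branch $4\kappa_0=\kappa$ (where $\theta_*=0$) only degenerates at $n=0$.

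The main obstacle I anticipate is the second step: correctly eliminating the $\tfrac{d\psi}{d\sigma}$ term for the non-self-adjoint memory operator so that the computation collapses onto $\Xi_n(\lambda)$, which requires the careful Green-identity bookkeeping with the nonlinear boundary term rather than a symmetry shortcut. A secondary subtlety is the degenerate case $\theta_*+2n\pi=0$, where the leading limit of $Re\{\mu'\}$ vanishes and one must instead invoke $Re\{\lim\Xi_n\}\neq0$ from Lemma \ref{le6} via a finer expansion in $\lambda-\lambda_1$.
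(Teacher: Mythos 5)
Your proposal is correct and follows essentially the same route as the paper's proof: you differentiate the identity (\ref{eq51}) at $\sigma=\sigma_{\lambda,n}$, pair the result with $\psi(\lambda)$ over $\Omega$ so that the $\frac{d\psi}{d\sigma}$ contribution drops and everything collapses onto $\Xi_{n}(\lambda)$ (nonzero by Lemma \ref{le6}), which is exactly the paper's passage from (\ref{eq53}) to (\ref{eq54})--(\ref{eq55}), and then you let $\lambda\rightarrow\lambda_{1}$ using (\ref{eq46})--(\ref{eq47}) and $F(\delta_{*},\theta_{*})=0$. Your sign bookkeeping $\frac{dRe\{\mu\}}{d\sigma}=-\frac{\omega(\lambda)}{\sigma_{\lambda,n}}\,Im\big\{\int_{\Omega}\psi^{2}(\lambda)\,dx\,/\,\Xi_{n}(\lambda)\big\}$ together with $Im\big\{\lim_{\lambda\rightarrow\lambda_{1}}\Xi_{n}(\lambda)\big\}=\theta_{*}+2n\pi$ is algebraically identical to the paper's normalized limit $\delta_{*}^{2}/\lim_{\lambda\rightarrow\lambda_{1}}|\Xi_{n}(\lambda)|^{2}>0$ in (\ref{eq57}) (via $\delta_{*}=-\kappa_{0}\sin\theta_{*}$), and the delicate point you flag --- justifying the elimination of the non-self-adjoint term $\triangle(\lambda,i\omega(\lambda),\sigma_{\lambda,n})\big[\frac{d\psi}{d\sigma}\big]$ --- is precisely the step the paper performs silently, so you are, if anything, more explicit than the source.
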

\begin{proof}
Differentiating (\ref{eq51}) with respect to $\sigma$ at $\sigma=\sigma_{\lambda,n}$, we obtain
\begin{eqnarray}\label{eq53}
\begin{split}
0=
&\triangle(\lambda,i\omega(\lambda),\sigma_{\lambda,n})\frac{d\psi(\sigma_{\lambda,n})}{d\sigma}
-i\omega(\lambda)d\nabla\cdot({u_{\lambda}^{*}}\nabla{\psi(\lambda)})e^{-i\theta(\lambda)}\\
&-\frac{d\mu(\sigma_{\lambda,n})}{d\tau}\Big[\psi(\lambda)+\sigma_{\lambda,n}d\nabla\cdot({u_{\lambda}^{*}}\nabla{\psi(\lambda)})e^{-i\theta(\lambda)}\Big].
\end{split}
\end{eqnarray}

Multiplying two sides of (\ref{eq53}) by $\psi(\lambda)$ and integrating over $\Omega$, it follows from (\ref{eq45}) that
\begin{eqnarray}\label{eq54}
\begin{split}
\frac{d\mu(\sigma_{\lambda,n})}{d\sigma}\Xi_{n}(\lambda)=
-i\omega(\lambda)de^{-i\theta(\lambda)}\int_{\Omega}\psi(\lambda)\nabla\cdot({u_{\lambda}^{*}}\nabla{\psi(\lambda)})dx,
\end{split}
\end{eqnarray}
which yields
\begin{eqnarray}\label{eq55}
\begin{split}
\frac{d\mu(\sigma_{\lambda,n})}{d\sigma}
&=\frac{-i\omega(\lambda){I_{0}}\int_{\Omega}\psi^{2}(\lambda)dx-i\omega(\lambda){\sigma_{\lambda,n}}|I_{0}|^{2}}{|\Xi_{n}(\lambda)|^{2}},
\end{split}
\end{eqnarray}
where
\begin{eqnarray}\label{eq56}
\begin{split}
I_{0}=de^{-i\theta(\lambda)}\int_{\Omega}\psi(\lambda)\nabla\cdot({u_{\lambda}^{*}}\nabla{\psi(\lambda)})dx.
\end{split}
\end{eqnarray}

This, together with the dominated convergence theorem and (\ref{eq46}), implies that
there is the following result
\begin{eqnarray}\label{eq57}
\begin{split}
\displaystyle\lim_{\lambda\rightarrow{\lambda_{1}}}\frac{1}{(\vartheta_{\lambda}^{*})^{2}}\frac{dRe\{\mu(\sigma_{\lambda,n})\}}{d\sigma}
&=\frac{-\delta_{*}\kappa_{0}{Re}\big\{ie^{-i\theta_{*}}\big\}
-(\theta_{*}+2n\pi)\kappa_{0}^{2}{Re}\big\{i\big\}}
{\displaystyle\lim_{\lambda\rightarrow{\lambda_{1}}}|\Xi_{n}(\lambda)|^{2}}\\
&=\frac{-\delta_{*}\kappa_{0}\sin{\theta_{*}}}
{\displaystyle\lim_{\lambda\rightarrow{\lambda_{1}}}|\Xi_{n}(\lambda)|^{2}}
=\frac{\delta_{*}^{2}}
{\displaystyle\lim_{\lambda\rightarrow{\lambda_{1}}}|\Xi_{n}(\lambda)|^{2}}>0,
\end{split}
\end{eqnarray}
since it follows from $F(\delta_{*},\theta_{*})=0$ that we get $\delta_{*}=-\kappa_{0}\sin{\theta_{*}}$.

\end{proof}

\begin{remark}\label{remark2}
This, together with Corollary \ref{Cor1} and Lemmas \ref{le3}-\ref{le8},
implies that around the steady state solution ${u_{\lambda}^{*}}$ established in Theorem \ref{th2.1},
the Hopf bifurcation can occur at $\sigma=\sigma_{\lambda,n}$ for $(\lambda,n)\in\Upsilon\times{\mathbb{N}_{0}}$ with $\lambda$ sufficiently close to ${\lambda_{1}}$.
In other words, a branch of periodic orbits of (\ref{eq3}) emerges from $(\sigma,u)=(\sigma_{\lambda,n},{u_{\lambda}^{*}})$.
\end{remark}

Now let
\begin{eqnarray}\label{eq58}
\begin{cases}
     \kappa_{1}=\lambda_{1}\int_{\Omega}\varphi_{1}^{3}(x)f_{u}(x,0)dx,     \\
    \kappa_{2}=\lambda_{1}g_{uu}(0)\int_{\partial{\Omega}}\varphi_{1}^{3}(x)r(x)dS,
\end{cases}
\end{eqnarray}
then we yield $\kappa=2\kappa_{0}+2\kappa_{1}+\kappa_{2}$, which means that (\ref{eq42}) can be rewritten as
\begin{eqnarray}\label{eq59}
\begin{cases}
   4\kappa_{0}=\kappa\neq{0}  \Rightarrow 2\kappa_{0}={2\kappa_{1}+\kappa_{2}}\neq-2\kappa_{0}, ~~\text{or}\\
   \kappa(4\kappa_{0}- \kappa)>0 \Rightarrow   4\kappa_{0}^{2}-(2\kappa_{1}+\kappa_{2})^{2}>0.
 \end{cases}
\end{eqnarray}

\begin{remark}\label{remark3}
Condition (\ref{eq42})($i.e.$, (\ref{eq59})) implies that no matter how the memory delay $\sigma$ changes, the Hopf bifurcation cannot occur in model (\ref{eq3})
near $u_{\lambda}^{*}$ if the memory reaction term is weaker than the interaction of the interior reaction term and the boundary reaction term. Whereas,
if the memory reaction term is stronger than the interaction of the interior reaction term and the boundary reaction term, it is the memory delay $\sigma$
that determines the existence of Hopf bifurcation in model (\ref{eq3}) near $u_{\lambda}^{*}$.
\end{remark}

For $\lambda\in \Upsilon$, we next consider the existence of purely imaginary eigenvalues of (\ref{eq28}) when $\sigma=0$ and analyse the stability of
the steady state solution ${u_{\lambda}^{*}}$ established in Theorem \ref{th2.1} by regarding the memory delay $\sigma$ as a parameter.

\begin{lemma}\label{le9}
Suppose that $(A0)$, $(A3)$, (\ref{eq12}), (\ref{eq13}) and $\kappa\neq{0}$ are satisfied, then for $\lambda\in \Upsilon$ and $\sigma=0$, the eigenvalues of (\ref{eq28}) have only eigenvalues with negative real parts if $\varrho(\lambda-\lambda_{1})>0$, while the eigenvalues of (\ref{eq28}) have at least one eigenvalue with a positive real part if $\varrho(\lambda-\lambda_{1})<0$.
\end{lemma}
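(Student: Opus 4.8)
The plan is to exploit the fact that at $\sigma=0$ the delay disappears and the spectral problem (\ref{eq29}) collapses to the spectrum of the ordinary (undelayed) linearization $D_uT(u_\lambda^*,\lambda)=:\mathcal{L}_\lambda$ at the steady state. Indeed, setting $\sigma=0$ forces $e^{-\mu\sigma}=1$, so $m(\lambda,\mu,0)\psi=0$ is exactly the eigenvalue equation $\mathcal{L}_\lambda\psi=\mu(\psi,0)^{T}$, a Robin-type elliptic problem with compact resolvent. Since $\mathcal{L}_{\lambda_1}=D_uT(0,\lambda_1)$ is the self-adjoint operator whose principal eigenvalue is $0$ with positive eigenfunction $\varphi_1$ (Lemma \ref{le2}) and whose remaining spectrum is strictly negative, spectral perturbation theory ensures that for $\lambda$ close to $\lambda_1$ precisely one eigenvalue $\mu_1(\lambda)$ lies near $0$, while every other eigenvalue of $\mathcal{L}_\lambda$ stays in the open left half-plane. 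Hence the stability of $u_\lambda^*$ at $\sigma=0$ is governed solely by the sign of $\mathrm{Re}\,\mu_1(\lambda)$.

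First I would locate $\mu_1(\lambda)$ by the same Lyapunov--Schmidt reduction used in Corollary \ref{Cor1}, specialized to $\sigma=0$ and to a real rescaled eigenvalue. Writing $\mu=\vartheta_\lambda^*s$ (the correct scaling by Lemma \ref{le4}) and $\psi=\varphi_1+\vartheta_\lambda^*\beta$ with $\beta\in(\mathcal{X}_1)_{\mathbb{C}}$, I reduce $m(\lambda,\vartheta_\lambda^*s,0)\psi=0$ to the scalar equation $\widetilde{F}(s):=\langle{\Psi,\widetilde{P}(s)}\rangle_1=0$, where $\widetilde{P}(s)=\lim_{\lambda\to\lambda_1}\frac{1}{\vartheta_\lambda^*}m(\lambda,\vartheta_\lambda^*s,0)\varphi_1$. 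This is the exact analogue of (\ref{eq41})--(\ref{eq43}) with the phase $e^{-i\theta}$ replaced by $1$ and $-i\delta$ replaced by $-s$; carrying out the identical computation, the two memory contributions $-\kappa_0$ and $e^{-i\theta}\kappa_0$ now cancel, leaving
\[
\widetilde{F}(s)=-s+\frac{\kappa}{2}.
\]
Thus the bifurcating eigenvalue satisfies $s_*=\kappa/2$ at $\lambda=\lambda_1$. The implicit function theorem (as in Corollary \ref{Cor1}, with $\kappa\neq0$ guaranteeing the reduced linearization is invertible on $(\mathcal{X}_1)_{\mathbb{C}}$) then produces a $C^1$ branch $\mu_1(\lambda)=\vartheta_\lambda^*\bigl(\tfrac{\kappa}{2}+o(1)\bigr)$ with $\mu_1(\lambda_1)=0$.

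Finally I would substitute the known amplitude asymptotics from Section 2.1, namely $\vartheta_\lambda^*=\dfrac{2\varrho(\lambda_1-\lambda)}{\kappa}+o(|\lambda-\lambda_1|)$, to obtain
\[
\mu_1(\lambda)=\varrho(\lambda_1-\lambda)+o(|\lambda-\lambda_1|)=-\varrho(\lambda-\lambda_1)+o(|\lambda-\lambda_1|).
\]
Consequently, for $\lambda$ near $\lambda_1$ one has $\mathrm{Re}\,\mu_1(\lambda)<0$ exactly when $\varrho(\lambda-\lambda_1)>0$ and $\mathrm{Re}\,\mu_1(\lambda)>0$ exactly when $\varrho(\lambda-\lambda_1)<0$. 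Together with the spectral gap from the first step, this yields the two alternatives in the statement. The main obstacle is precisely that spectral-gap step: because $\mathcal{L}_\lambda$ is non-self-adjoint once $u_\lambda^*\neq0$ (the advective memory term $d\nabla\cdot({u_\lambda^*}\nabla\psi)$ breaks symmetry), one must argue by continuity of the spectrum from the self-adjoint limit $\mathcal{L}_{\lambda_1}$ that no other eigenvalue, and in particular no complex conjugate pair, can cross into the right half-plane as $\lambda\to\lambda_1$, so that the single real eigenvalue $\mu_1(\lambda)$ genuinely controls stability; the nondegeneracy supplied by $\kappa\neq0$ is what keeps the reduction well posed throughout.
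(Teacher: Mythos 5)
Your proposal is correct, and its computational core coincides with the paper's: at $\sigma=0$ the reduced scalar function is $-s+\frac{\kappa}{2}$ (the two memory contributions $-\kappa_{0}$ and $e^{-i\theta}\kappa_{0}$ in (\ref{eq43}) cancel at $\theta=0$), and combining $s_{*}=\kappa/2$ with $\vartheta_{\lambda}^{*}=\frac{2\varrho(\lambda_{1}-\lambda)}{\kappa}+o(|\lambda-\lambda_{1}|)$ gives $\mu_{1}(\lambda)=-\varrho(\lambda-\lambda_{1})+o(|\lambda-\lambda_{1}|)$, exactly matching the paper's limit $\lim_{j\to\infty}\mu_{j}/(\lambda_{j}^{*}-\lambda_{1})=-\frac{2\varrho}{\kappa}\big[2\kappa_{0}+2\kappa_{1}+\kappa_{2}\big]+\varrho=-\varrho$ in (\ref{eq61}). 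Where you genuinely differ is in the surrounding architecture. The paper first rules out purely imaginary eigenvalues at $\sigma=0$ directly from $F(\delta,0)=-i\delta+\frac{\kappa}{2}\neq 0$ (using $\kappa\neq 0$), and then runs a sequential compactness argument: for any eigenvalues $\mu_{j}$ of (\ref{eq28}) along $\lambda_{j}^{*}\to\lambda_{1}$ with eigenfunctions $\psi_{j}^{*}\to\varphi_{1}$, pairing $m(\lambda_{j}^{*},\mu_{j},0)\psi_{j}^{*}=0$ with $\Psi$ forces $\mu_{j}\to 0$ and the sign identity $\mathrm{sign}(Re\{\mu_{j}\})=\mathrm{sign}(\varrho(\lambda_{1}-\lambda_{j}^{*}))$; the reason this controls the \emph{whole} unstable spectrum is that Lemmas \ref{le3} and \ref{le4} show every eigenvalue with $Re\{\mu_{\lambda}\}\geq 0$ satisfies $|\mu_{\lambda}/\vartheta_{\lambda}^{*}|$ bounded (hence $\mu_{\lambda}\to 0$) with normalized eigenfunctions precompact and converging to $\varphi_{1}$. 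You instead build the critical branch $\mu_{1}(\lambda)$ by an implicit-function-theorem reduction and confine the remaining spectrum by perturbation from the self-adjoint limit $\mathcal{L}_{\lambda_{1}}$. Both routes work, and yours has a small advantage: the IFT construction actually \emph{produces} the unstable eigenvalue when $\varrho(\lambda-\lambda_{1})<0$, whereas the paper's sequence argument presupposes eigenvalues exist near zero. But the step you yourself flag as the main obstacle --- continuity of the spectrum of the non-self-adjoint family and exclusion of eigenvalues with $Re\{\mu\}\geq 0$ away from the origin --- should not rest on soft ``spectral continuity'' of an unbounded non-self-adjoint operator: it is precisely what the a priori estimates of Lemmas \ref{le3}--\ref{le4} were proved to supply, so you can close that gap by citing them instead. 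One small correction: in your reduction the invertibility needed for the implicit function theorem is $\partial_{s}\widetilde{F}=-1\neq 0$ together with invertibility of $\mathcal{L}_{\lambda_{1}}$ restricted to $({\mathcal{X}_{1}})_{\mathbb{C}}$; the hypothesis $\kappa\neq 0$ is not what makes that reduction well posed, but rather what excludes purely imaginary eigenvalues and underwrites the asymptotics of $\vartheta_{\lambda}^{*}$ used in the final sign computation.
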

\begin{proof}
At first, we obtain that the eigenvalues of (\ref{eq28}) have no purely imaginary eigenvalues if $\sigma=0$ and $\kappa\neq{0}$.
In fact, if  the eigenvalues of (\ref{eq28}) have purely imaginary eigenvalues, then it follows from the above similar analysis that we can get
$F(\delta,0)=-i\delta+\frac{\kappa}{2}\neq{0}$, which is a contradiction.

For each $j\in\mathbb{N}$, let $\big\{{\lambda_{j}^{*}}\big\}_{j=1}^{\infty}\subseteq\Upsilon$ be a sequence, ${\lambda_{j}^{*}}\rightarrow{\lambda_{1}}$ as $j\rightarrow\infty$ such that
there exist an eigenvalue $\mu_{j}$ of (\ref{eq28}), and  a sequence $\big\{{\psi_{j}^{*}}\big\}_{j=1}^{\infty}\subseteq\mathcal{X}_{\mathbb{C}}\setminus{\{0\}}$, with ${\psi_{j}^{*}}\rightarrow{\varphi_{1}}$ as $j\rightarrow\infty$,  such that $m({\lambda_{j}^{*}},\mu_{j},0){\psi_{j}^{*}}=0$, where
\begin{small}
\begin{eqnarray}\label{eq60}
\begin{split}
   &0=\triangle{\psi_{j}^{*}}+d\nabla\cdot({\psi_{j}^{*}}\nabla{u_{\lambda_{j}^{*}}^{*}})+d\nabla\cdot({u_{\lambda_{j}^{*}}^{*}}\nabla{\psi_{j}^{*}})
    +{\lambda_{j}^{*}}u_{\lambda_{j}^{*}}^{*}f_{u}(x,u_{\lambda_{j}^{*}}^{*}){\psi_{j}^{*}}+{\lambda_{j}^{*}}f(x,u_{\lambda_{j}^{*}}^{*}){\psi_{j}^{*}}-\mu_{j}{\psi_{j}^{*}}, ~\text{in $\Omega$},\\
   &0=\partial_{\overrightarrow{n}}{\psi_{j}^{*}}-{\lambda_{j}^{*}}r(x)g_{u}(u_{\lambda_{j}^{*}}^{*}){\psi_{j}^{*}}, ~~~~~~~~~~~~~~~~~~~~~~~~~~~~~~~~~~~~~~~~~~~~~~~~~~~~~~~~~~~~~~~~~~~~~~~~~~~~~~\text{on $\partial{\Omega}$}.
\end{split}
\end{eqnarray}
\end{small}

This, together with
$\displaystyle\lim_{j\rightarrow {\infty}}\frac{u_{\lambda_{j}^{*}}^{*}}{\lambda_{j}^{*}-{\lambda}_{1}}
=\varphi_{1}\displaystyle\lim_{j\rightarrow {\infty}}\frac{\vartheta_{\lambda_{j}^{*}}^{*}}{\lambda_{j}^{*}-{\lambda}_{1}}
=-\frac{2\varrho\varphi_{1}}{\kappa}$
and $\langle{\Psi,m({\lambda_{j}^{*}},\mu_{j},0){\psi_{j}^{*}}}\rangle_{1}=0$, implies that
$\displaystyle\lim_{j\rightarrow {\infty}}\mu_{j}=0$ and
\begin{eqnarray}\label{eq61}
\begin{split}
\displaystyle\lim_{j\rightarrow {\infty}}\frac{\mu_{j}}{\lambda_{j}^{*}-{\lambda}_{1}}
=-\frac{2\varrho}{\kappa}\big[2\kappa_{0}+2\kappa_{1}+\kappa_{2}\big]+\varrho=-\varrho,
\end{split}
\end{eqnarray}
which means that $sign \Big(Re\big\{\mu_{j}\big\}\Big)=sign \Big(\varrho(\lambda_{1}-\lambda_{j}^{*})\Big)$ for sufficiently large $j$. Thus, the proof is completed.
\end{proof}

\begin{remark}\label{remark4}
 Suppose that $(A0)$, $(A3)$, (\ref{eq12}), (\ref{eq13}) and $\kappa\neq{0}$ are satisfied, Lemma \ref{le9} implies that if $\sigma=0$, the eigenvalues of (\ref{eq28})
 with $\lambda\in \Upsilon$ satisfying $\varrho(\lambda-\lambda_{1})>0$ have negative real parts. Namely, the steady state solution ${u_{\lambda}^{*}}$ established
 in Theorem \ref{th2.1} is linearly stable when $\sigma=0$ and $\lambda\in \Upsilon$ satisfying $\varrho(\lambda-\lambda_{1})>0$.
 However, the above analysis means that the presence of the memory delay $\sigma$ may lead to some nonlinear oscillations and cause the complex dynamical  behaviors.
 Hence, the memory delay $\sigma$ could be considered as oscillatory response of (\ref{eq3}) near $u_{\lambda}^{*}$.
\end{remark}

Thus, it follows from the above discussions that we get following results for the steady state solution ${u_{\lambda}^{*}}$ established in Theorem \ref{th2.1}.

\begin{theorem}\label{th3.2}
Suppose that $(A0)$, $(A3)$, (\ref{eq12}) and (\ref{eq13}) are satisfied.
\begin{itemize}
\item[(i)] If $\kappa\neq{0}$, then for each $(\sigma,\lambda)\in{\mathbb{R}}\setminus{\mathbb{R_{-}}}\times\Upsilon$ satisfying $\varrho(\lambda-\lambda_{1})<0$
and $\big|\lambda-\lambda_{1}\big|\ll{1}$,
the eigenvalues of (\ref{eq28}) have at least one eigenvalue with a positive real part, and the steady state solution ${u_{\lambda}^{*}}$ established in Theorem \ref{th2.1} is unstable.

\item[(ii)] If $\kappa(4\kappa_{0}- \kappa)<0$, which implies that $\kappa\neq{0}$, then then for each $(\sigma,\lambda)\in{\mathbb{R}}\setminus{\mathbb{R_{-}}}\times\Upsilon$ satisfying $\varrho(\lambda-\lambda_{1})>0$ and $\big|\lambda-\lambda_{1}\big|\ll{1}$,
    the eigenvalues of (\ref{eq28}) have only eigenvalues with negative real parts, and the steady state solution ${u_{\lambda}^{*}}$ established in Theorem \ref{th2.1} is linearly stable.

\item[(iii)] Under the condition (\ref{eq42}), then for each $(\sigma,\lambda)\in{\mathbb{R}}\setminus{\mathbb{R_{-}}}\times\Upsilon$ satisfying $\varrho(\lambda-\lambda_{1})>0$ and $\big|\lambda-\lambda_{1}\big|\ll{1}$,
    the eigenvalues of (\ref{eq28}) have only eigenvalues with negative real parts when $\sigma\in[0,\sigma_{\lambda,0})$,
    and precisely $2(n+1)$ eigenvalues with positive real parts when $\sigma\in(\sigma_{\lambda,n},\sigma_{\lambda,n+1}]$, $n\in{\mathbb{N}_{0}}$.
    Therefore, the steady state solution ${u_{\lambda}^{*}}$ established in Theorem \ref{th2.1} is linearly stable when $\sigma\in[0,\sigma_{\lambda,0})$,
    and is unstable when $\sigma_{\lambda,0}<\sigma$, and undergoes Hopf bifurcation at $\sigma=\sigma_{\lambda,0}$.

\end{itemize}
\end{theorem}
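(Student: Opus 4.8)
The plan is to treat $\sigma$ as the bifurcation parameter and, for $\lambda$ close to $\lambda_{1}$, to follow only the single \emph{critical} eigenvalue family that emanates from the simple zero eigenvalue of $\mathcal{L}_{\lambda_{1}}$. At $\lambda=\lambda_{1}$ the operator $\mathcal{L}_{\lambda_{1}}$ has $0$ as a simple principal eigenvalue together with a spectral gap, i.e. all remaining eigenvalues lie in the open left half-plane. Since $m(\lambda,\mu,\sigma)$ is a smooth perturbation of $\mathcal{L}_{\lambda_{1}}$, for $|\lambda-\lambda_{1}|\ll 1$ the a priori bounds of Lemma \ref{le3} and Lemma \ref{le4} confine every eigenvalue with $Re\{\mu\}\geq 0$ to a bounded set scaling with $\vartheta_{\lambda}^{*}$; combining this with the spectral gap shows that the only eigenvalue able to approach or cross the imaginary axis is the tracked critical one. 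Hence the number of eigenvalues with positive real part is governed entirely by this one family, and the whole problem reduces to locating its crossings.

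For the $\sigma=0$ anchor I would invoke Lemma \ref{le9}: when $\varrho(\lambda-\lambda_{1})>0$ all eigenvalues have negative real part, when $\varrho(\lambda-\lambda_{1})<0$ at least one has positive real part, and in both cases no purely imaginary eigenvalue occurs at $\sigma=0$. To prove (ii), observe that $\kappa(4\kappa_{0}-\kappa)<0$ makes $\delta_{*}$ in (\ref{eq44}) non-real, so $F(\delta,\theta)=0$ has no real solution and, by the construction preceding Corollary \ref{Cor1}, (\ref{eq28}) admits no purely imaginary eigenvalue near $\lambda_{1}$ for any $\sigma\geq 0$; the critical family therefore never crosses the imaginary axis and, by continuity in $\sigma$, stays in the left half-plane, giving linear stability for all $\sigma$. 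For (i) I distinguish two cases exhausting $\kappa\neq 0$: if $\kappa(4\kappa_{0}-\kappa)<0$, then exactly as in (ii) no purely imaginary eigenvalue exists, so the unstable eigenvalue present at $\sigma=0$ persists for every $\sigma\geq 0$; if instead (\ref{eq42}) holds, every crossing occurs at some $\sigma=\sigma_{\lambda,n}$ and, by the transversality of Lemma \ref{le8}, is from left to right, so the count of unstable eigenvalues is nondecreasing in $\sigma$ and stays at least one. In either case $u_{\lambda}^{*}$ is unstable for all $\sigma\geq 0$.

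For (iii) I would assemble the Hopf scenario under (\ref{eq42}) with $\varrho(\lambda-\lambda_{1})>0$. Lemma \ref{le9} gives stability at $\sigma=0$. By Corollary \ref{Cor1}, purely imaginary eigenvalues occur precisely along the increasing sequence $\sigma_{\lambda,n}=(\theta(\lambda)+2n\pi)/\omega(\lambda)$, and at each such value $i\omega(\lambda)$ is simple by Lemma \ref{le7}; its conjugate $-i\omega(\lambda)$ is simple as well, the problem being real. The transversality of Lemma \ref{le8} forces each conjugate pair to cross from left to right, so exactly two eigenvalues acquire positive real part at each $\sigma_{\lambda,n}$. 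Starting from the stable configuration at $\sigma=0$, the count then reads: no unstable eigenvalue on $[0,\sigma_{\lambda,0})$ and exactly $2(n+1)$ on $(\sigma_{\lambda,n},\sigma_{\lambda,n+1}]$, yielding stability on $[0,\sigma_{\lambda,0})$ and instability for $\sigma>\sigma_{\lambda,0}$. Since $i\omega(\lambda)$ is a simple eigenvalue crossing transversally with no other eigenvalue on the imaginary axis, the Hopf bifurcation theorem applies at $\sigma=\sigma_{\lambda,0}$.

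The main obstacle is exactly the global eigenvalue count asserted in the first paragraph: ruling out that some eigenvalue other than the tracked critical family sneaks across the imaginary axis, or escapes to infinity, as $\sigma$ ranges over $[0,\infty)$. This is what the estimates of Lemma \ref{le3} (bound on $\|\nabla\psi_{\lambda}\|$) and Lemma \ref{le4} (bound on $|\mu_{\lambda}/\vartheta_{\lambda}^{*}|$) are designed to control, since they force every eigenvalue with nonnegative real part to collapse onto the critical family emanating from the simple zero eigenvalue of $\mathcal{L}_{\lambda_{1}}$ as $\lambda\to\lambda_{1}$. Making this confinement uniform in $\sigma$, and combining it with the spectral gap of $\mathcal{L}_{\lambda_{1}}$, is the delicate point on which the exact count $2(n+1)$ — and hence the precise stability switch at $\sigma_{\lambda,0}$ — ultimately rests.
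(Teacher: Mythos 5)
Your proposal is correct and takes essentially the same route as the paper, which establishes Theorem \ref{th3.2} precisely by assembling Lemma \ref{le9} (the $\sigma=0$ anchor), Lemma \ref{le5} (no zero eigenvalue when $\kappa\neq 0$), Corollary \ref{Cor1} together with the formula $\delta_{*}^{2}=\kappa(4\kappa_{0}-\kappa)/4$ (so imaginary crossings occur exactly under (\ref{eq42}) and are absent when $\kappa(4\kappa_{0}-\kappa)<0$), and Lemmas \ref{le7}--\ref{le8} (simplicity and left-to-right transversality), yielding the same count of $2(n+1)$ unstable eigenvalues and the same case split in part (i). The uniform confinement of eigenvalues with nonnegative real part that you flag as the delicate point is handled in the paper only through the a priori estimates of Lemmas \ref{le3}--\ref{le4}, exactly as in your sketch, so your assembly matches the paper's argument and its level of rigor.
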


\section{Stability of steady states in Theorem \ref{th2.2}}
From Theorem \ref{th2.2}, for any given $u_{1}\in\mathbb{R_{+}}$ satisfying $(A1)$ and $(A2)$, model (\ref{eq3}) near $(u,\lambda)=(u_{1},0)$ has precisely two curves of
steady state solutions: one is the spatially homogeneous positive steady state solution $(u,\lambda)=(u_{1},0)$
from the curve $\Gamma_{u_{1}}$, and the other is the spatially nonhomogeneous positive steady state solution  $(u,\lambda)=(u_{01}(s),\lambda_{01}(s))$ with
$s\in(-\varepsilon_{1}, \varepsilon_{1}), \varepsilon_{1}\in\mathbb{R_{+}}$ from the curve $\Gamma_{\Xi}$, where
$u_{01}(s)=u_{1}+\eta_{1}s+s\zeta_{1}(s)$ and $\lambda_{01}(s)=s+s\xi_{1}(s)$ with $\zeta_{1}(0)=\zeta_{1}^{'}(0)=\xi_{1}(0)=\xi_{1}^{'}(0)=0$ can satisfy
\begin{eqnarray}\label{eq62}
\left \langle {\bar{l},T(u_{01}(s),\lambda_{01}(s))} \right \rangle_{2}:=\int_{\Omega}T_{1}(u_{01}(s),\lambda_{01}(s))dx-(1+du_{1})\int_{\partial{\Omega}}T_{2}(u_{01}(s),\lambda_{01}(s))dS=0,
\end{eqnarray}
where $s\in(-\varepsilon_{1}, \varepsilon_{1})$. In particular, we get $(u_{01}(s),\lambda_{01}(s))=(u_{1},0)$ as $s=0$. In the following, we will investigate the stability of the
bifurcated steady state $u=u_{01}(s)$ with $\lambda=\lambda_{01}(s)$ for $s\in\tilde{\Upsilon}=(-\varepsilon_{1},0)\cup(0, \varepsilon_{1})$, when $\Omega$ is a bounded open set in $\mathbb{R}$.
To analyse the stability of model (\ref{eq3}) near $(u,\lambda)=(u_{01}(s),\lambda_{01}(s))$, we consider the following linearized equation of (\ref{eq3})
at $u=u_{01}(s)$ with $\lambda=\lambda_{01}(s)$
\begin{equation}\label{eq63}
 \begin{cases}
    \frac{\partial{v}}{\partial{t}}=\triangle{v}+d\nabla\cdot({v}\nabla{u_{01}(s)})+d\nabla\cdot({u_{01}(s)}\nabla{v_{\sigma}})\\
             ~~~~~~+{\lambda_{01}(s)}u_{01}(s)f_{u}(x,u_{01}(s))v+{\lambda_{01}(s)}f(x,u_{01}(s))v,            ~~\text{$x \in \Omega, t >0 $},\\
     \partial_{\overrightarrow{n}}v={\lambda_{01}(s)}r(x)g_{u}(u_{01}(s))v,               ~~~~~~~~~~~~~~~~~~~~~~~~~~~~~~~~~~~\text{$x \in \partial{\Omega}, t >0 $},
  \end{cases}
\end{equation}
where $v=v(x,t)$ and $v_{\sigma}=v(x,t-\sigma)$. Then using the similar arguments in Section $3$, we are looking for $(\mu,\psi)\in{\mathbb{C}}\times\mathcal{X}_{\mathbb{C}}\setminus{\{0\}}$
with any $s\in\tilde{\Upsilon}$ and $\sigma\in{\mathbb{R}}\setminus{\mathbb{R_{-}}}$,
such that
\begin{eqnarray}\label{eq64}
\tilde{\triangle}(s,\mu,\sigma)\psi(x)=0,          &\text{$x \in \Omega$},
\end{eqnarray}
where
\begin{eqnarray*}
\begin{split}
\tilde{\triangle}(s,\mu,\sigma)\psi=&
   \triangle{\psi}+d\nabla\cdot({\psi}\nabla{u_{01}(s)})+d\nabla\cdot({u_{01}(s)}\nabla{\psi})e^{-\mu\sigma}\\
    &+{\lambda_{01}(s)}u_{01}(s)f_{u}(x,u_{01}(s))\psi+{\lambda_{01}(s)}f(x,u_{01}(s))\psi-\mu{\psi},
\end{split}
\end{eqnarray*}
and for all $\psi\in\mathcal{X}_{\mathbb{C}}\setminus{\{0\}}$ satisfying $\partial_{\overrightarrow{n}}\psi(x)={\lambda_{01}(s)}r(x)g_{u}(u_{01}(s))\psi(x)$ on
$x\in \partial{\Omega}$.

If any $(s,\mu,\sigma,\psi)\in\tilde{\Upsilon}\times{\mathbb{C}}\times{{\mathbb{R}}\setminus{\mathbb{R_{-}}}}\times{\mathcal{X}_{\mathbb{C}}\setminus{\{0\}}}$ can solve (\ref{eq64})
and  $\partial_{\overrightarrow{n}}\psi={\lambda_{01}(s)}r(x)g_{u}(u_{01}(s))$ $\psi$ holds on $\partial{\Omega}$, $\mu\in{\mathbb{C}}$ can be regarded as an eigenvalue of (\ref{eq63})
with eigenfunction $\psi$, then without loss of generality, we assume that $\|{\psi}\|_{\mathcal{Y}_{\mathbb{C}}}=1$ for simplicity. Consequently, we respectively give two
prior estimates for the eigenfunction $\psi_{s}$ and eigenvalue $\mu_{s}$ with $s\in\tilde{\Upsilon}$ by using
arguments as in Section $3$. First, we get a priori estimate for the eigenfunction.

\begin{lemma}\label{le10}
Suppose that $(A0)$, $(A1)$, $(A2)$ and $(\tilde{A3})$ are satisfied, where
\begin{eqnarray*}
(\tilde{A3}) ~~|d|<\tilde{d}_{*}:=\frac{1}{\displaystyle\max_{s\in\tilde{\Upsilon}}\max_{x\in\bar{\Omega}}\big\{{u_{01}(s)}\big\}},~~
~~~~~~~~~~~~~~~~~~~~~~~~~~~~~~~~~~~~~~~~~~~~~~~~~~~~~~~~~~~~~~~~~~~~~~~~~
\end{eqnarray*}
then there exists a constant $\mathbf{\tilde{C}}$, such that for any
$(s,\mu_{s},\sigma_{s},\psi_{s})\in\tilde{\Upsilon}\times{\mathbb{C}}\times{\mathbb{R}}\setminus{\mathbb{R_{-}}}\times{\mathcal{X}_{\mathbb{C}}\setminus{\{0\}}}$
with $Re\{\mu_{s}\}\geq 0$ solving (\ref{eq64}) and satisfying $\partial_{\overrightarrow{n}}\psi_{s}={\lambda_{01}(s)}r(x)g_{u}(u_{01}(s))\psi_{s}$ on $\partial{\Omega}$, and
\begin{eqnarray*}
  \big\|{\nabla\psi_{s}}\big\|_{\mathcal{Y}_{\mathbb{C}}}^{2}\leq{\mathbf{\tilde{C}}}.
  \end{eqnarray*}
\end{lemma}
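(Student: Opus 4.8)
The plan is to follow the proof of Lemma~\ref{le3} line by line, replacing the pair $(u_{\lambda}^{*},\lambda)$ by $(u_{01}(s),\lambda_{01}(s))$ and invoking $(\tilde{A3})$ in the role of $(A3)$. First I would record the uniform regularity and boundedness of the bifurcating branch. By the continuity of $s\mapsto(u_{01}(s),\lambda_{01}(s))$ established in Theorem~\ref{th2.2}, the functions $\lambda_{01}(s)$ and $\zeta_{1}(s)$, and hence $u_{01}(s)=u_{1}+\eta_{1}s+s\zeta_{1}(s)$, stay bounded for $s\in\tilde{\Upsilon}$; since each $u_{01}(s)$ solves (\ref{eq4}) with $\lambda=\lambda_{01}(s)$, the embedding theorem \cite{2003-Sobolev Spaces(Adams)} together with elliptic regularity \cite{2001-Elliptic-PDE-Second Order} yields $u_{01}(s)\in C^{2,\check{\alpha}}$, so that $\triangle u_{01}(s)$, $\nabla u_{01}(s)$ and the boundary datum $\partial_{\overrightarrow{n}}u_{01}(s)=\lambda_{01}(s)r(x)g(u_{01}(s))$ are uniformly bounded in $L^{\infty}$ over $\tilde{\Upsilon}$.

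Next I would derive the counterpart of the integration-by-parts identity (\ref{eq30}), namely
\[
Re\big\{\langle{\psi_{s},\nabla\cdot({\psi_{s}}\nabla{u_{01}(s)})}\rangle\big\}
=\frac{1}{2}\int_{\partial{\Omega}}|\psi_{s}|^{2}\frac{\partial{u_{01}(s)}}{\partial{\overrightarrow{n}}}dS
+\frac{1}{2}\int_{\Omega}|\psi_{s}|^{2}\triangle{u_{01}(s)}\,dx,
\]
which again follows from Green's formula. I then multiply $\tilde{\triangle}(s,\mu_{s},\sigma_{s})\psi_{s}=0$ by $\bar{\psi}_{s}$, integrate over $\Omega$ and take real parts; substituting the boundary condition $\partial_{\overrightarrow{n}}\psi_{s}=\lambda_{01}(s)r(x)g_{u}(u_{01}(s))\psi_{s}$ and the identity above produces an expression for $\|\nabla\psi_{s}\|_{\mathcal{Y}_{\mathbb{C}}}^{2}$ that is structurally identical to (\ref{eq32}).

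The crux is the treatment of the memory term. Because $Re\{\mu_{s}\}\geq 0$ and $\sigma_{s}\geq 0$ (as $\sigma_{s}\in{\mathbb{R}}\setminus{\mathbb{R_{-}}}$), one has $|Re\{e^{-\mu_{s}\sigma_{s}}\}|\leq e^{-Re\{\mu_{s}\}\sigma_{s}}\leq 1$, so the unique term carrying $\|\nabla\psi_{s}\|_{\mathcal{Y}_{\mathbb{C}}}^{2}$ on the right is bounded by $|d|\,\max_{x\in\bar{\Omega}}\{u_{01}(s)\}\,\|\nabla\psi_{s}\|_{\mathcal{Y}_{\mathbb{C}}}^{2}$, whereas every remaining contribution is bounded by a constant $\tilde{\mathbf{C}}_{0}$ built from the $L^{\infty}$ bounds of the first step and from $\|\psi_{s}\|_{\mathcal{Y}_{\mathbb{C}}}=1$. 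This gives $\|\nabla\psi_{s}\|_{\mathcal{Y}_{\mathbb{C}}}^{2}\leq\tilde{\mathbf{C}}_{0}+|d|\max\{u_{01}(s)\}\|\nabla\psi_{s}\|_{\mathcal{Y}_{\mathbb{C}}}^{2}$, and $(\tilde{A3})$ forces $|d|\max_{s,x}\{u_{01}(s)\}<1$, so the gradient term can be absorbed into the left-hand side to yield $\|\nabla\psi_{s}\|_{\mathcal{Y}_{\mathbb{C}}}^{2}\leq\tilde{\mathbf{C}}_{0}/(1-|d|\max\{u_{01}(s)\}):=\tilde{\mathbf{C}}$.

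I expect the main obstacle to be precisely this absorption step rather than any of the integrations by parts: the memory term feeds back a multiple of $\|\nabla\psi_{s}\|_{\mathcal{Y}_{\mathbb{C}}}^{2}$ itself rather than a lower-order quantity, and the whole estimate closes only because $(\tilde{A3})$ keeps the coefficient $|d|\max\{u_{01}(s)\}$ strictly below one, exactly as $(A3)$ did in Lemma~\ref{le3}. All other contributions are routine $L^{\infty}$ bookkeeping using the uniform bounds on $u_{01}(s)$, $r$, $f$ and $g$ collected in the first step.
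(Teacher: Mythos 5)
Your proposal is correct and is essentially the paper's own argument: the paper states Lemma \ref{le10} without proof, explicitly deferring to the arguments of Section 3, and your line-by-line adaptation of the proof of Lemma \ref{le3} with $(u_{\lambda}^{*},\lambda)$ replaced by $(u_{01}(s),\lambda_{01}(s))$ and $(\tilde{A3})$ in place of $(A3)$ is exactly that adaptation. You also correctly identify the absorption of the memory term via $|e^{-\mu_{s}\sigma_{s}}|\leq 1$ and $(\tilde{A3})$ as the one non-routine step, just as in (\ref{eq32})--(\ref{eq33}).
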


Then a priori estimate for the eigenvalue can be obtained in the following lemma.
\begin{lemma}\label{le11}
Suppose that $(A0)$, $(A1)$, $(A2)$ and $(\tilde{A3})$ are satisfied. If $(s,\mu_{s},\sigma_{s},\psi_{s})\in\tilde{\Upsilon}\times{\mathbb{C}}\times{\mathbb{R}}\setminus{\mathbb{R_{-}}}\times{\mathcal{X}_{\mathbb{C}}\setminus{\{0\}}}$
with $Re\{\mu_{s}\}\geq 0$ can solve (\ref{eq64}) and $\partial_{\overrightarrow{n}}\psi_{s}={\lambda_{01}(s)}r(x)g_{u}(u_{01}(s))\psi_{s}$ holds on $\partial{\Omega}$,
then $\Big|\frac{\mu_{s}}{s}\Big|$ is bounded for $s\in\tilde{\Upsilon}$.
\end{lemma}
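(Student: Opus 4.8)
The plan is to follow the self-adjoint comparison argument of Lemma \ref{le4} verbatim at the operator level, and to supply the one new ingredient that the $\Gamma_{u_1}$-branch forces. First I would introduce, for each fixed $s\in\tilde{\Upsilon}$, the self-conjugate operator $\tilde{Q}_s:\mathcal{X}_{\mathbb{C}}\rightarrow\mathcal{Y}_{\mathbb{C}}$ given by $\tilde{Q}_s\psi=\triangle\psi+d\nabla\cdot(u_{01}(s)\nabla\psi)+\lambda_{01}(s)f(x,u_{01}(s))\psi=\nabla\cdot\big((1+du_{01}(s))\nabla\psi\big)+\lambda_{01}(s)f(x,u_{01}(s))\psi$, under the boundary condition $\partial_{\overrightarrow{n}}\psi=\lambda_{01}(s)r(x)g_u(u_{01}(s))\psi$. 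Because $u_{01}(s)>0$ solves (\ref{eq4}) with $\lambda=\lambda_{01}(s)$, one has $\tilde{Q}_su_{01}(s)=0$, so $0$ is the principal eigenvalue of $\tilde{Q}_s$ and $\langle\psi,\tilde{Q}_s\psi\rangle\le 0$ for every $\psi\in\mathcal{X}_{\mathbb{C}}$. Pairing $\psi_s$ with $\tilde{\triangle}(s,\mu_s,\sigma_s)\psi_s=0$ from (\ref{eq64}) and isolating $\mu_s$ yields the analogue of (\ref{eq34}),
\[
\mu_s\int_\Omega|\psi_s|^2\,dx=\langle\psi_s,\tilde{Q}_s\psi_s\rangle+d\langle\psi_s,\nabla\cdot(\psi_s\nabla u_{01}(s))\rangle+d\big[e^{-\mu_s\sigma_s}-1\big]\langle\psi_s,\nabla\cdot(u_{01}(s)\nabla\psi_s)\rangle+\lambda_{01}(s)\langle\psi_s,u_{01}(s)f_u(x,u_{01}(s))\psi_s\rangle.
\]

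The decisive input comes from the bifurcation expansion of Theorem \ref{th2.2}: since $u_{01}(s)=u_1+\eta_1 s+s\zeta_1(s)$ and $\lambda_{01}(s)=s+s\xi_1(s)$, all of $\lambda_{01}(s)$, $\nabla u_{01}(s)$, $\partial_{\overrightarrow{n}}u_{01}(s)=\lambda_{01}(s)r(x)g(u_{01}(s))$ are $O(s)$, and reading off the interior equation in the form $(1+du_{01}(s))\triangle u_{01}(s)=-d|\nabla u_{01}(s)|^2-\lambda_{01}(s)u_{01}(s)f(x,u_{01}(s))$ shows $\triangle u_{01}(s)=O(s)$ as well. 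Rerunning the energy estimate behind Lemma \ref{le10} with attention to the $s$-dependence then shows that every driving term there is $O(s)$, so in fact $\|\nabla\psi_s\|_{\mathcal{Y}_{\mathbb{C}}}^2=O(s)$, which sharpens the bare boundedness recorded in Lemma \ref{le10}; here condition $(\tilde{A3})$ is precisely what absorbs $\int_\Omega u_{01}(s)|\nabla\psi_s|^2\,dx$ into the left-hand side through $1-|d|\max_{x\in\bar{\Omega}}u_{01}(s)>0$. This step is the main obstacle and the exact place where the present case departs from Lemma \ref{le4}: there the required smallness is carried by the steady state itself, $u_\lambda^*=O(\vartheta_\lambda^*)\to 0$, whereas here $u_{01}(s)\to u_1\ne 0$, so the smallness must instead come from $\|\nabla\psi_s\|^2$, reflecting that as $s\to 0$ the eigenfunction approaches a constant (the generator of $\mathcal{N}(T_u(u_1,0))=\mathrm{span}\{1\}$ in (\ref{eq21})), which the gradient annihilates.

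With $\|\nabla\psi_s\|^2=O(s)$ secured, I would finish by bounding the right-hand side of the identity after dividing by $s$, exactly as in Lemma \ref{le4}. Using $\langle\psi_s,\tilde{Q}_s\psi_s\rangle\le 0$ (real) I drop it in the real-part bound and note it does not affect the imaginary part. The boundary and zeroth-order terms are $O(s)$ because $\lambda_{01}(s),\partial_{\overrightarrow{n}}u_{01}(s),\triangle u_{01}(s),\nabla u_{01}(s)$ are all $O(s)$; the one genuinely dangerous quantity, $\langle\psi_s,\nabla\cdot(u_{01}(s)\nabla\psi_s)\rangle=\lambda_{01}(s)\int_{\partial\Omega}|\psi_s|^2u_{01}(s)r(x)g_u(u_{01}(s))\,dS-\int_\Omega u_{01}(s)|\nabla\psi_s|^2\,dx$, is now $O(s)$ since $\int_\Omega u_{01}(s)|\nabla\psi_s|^2\,dx\le\big(\max_{x\in\bar{\Omega}}u_{01}(s)\big)\|\nabla\psi_s\|^2=O(s)$, and it carries the bounded factor $|e^{-\mu_s\sigma_s}-1|\le 2$ (from $Re\{\mu_s\}\ge 0$, $\sigma_s\ge 0$, whence $|e^{-\mu_s\sigma_s}|\le 1$). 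Invoking the embedding theorem and elliptic regularity, as in Lemmas \ref{le3}, \ref{le4} and \ref{le10}, to bound the relevant $L^\infty$-norms uniformly on $\tilde{\Upsilon}$, and dividing by $\int_\Omega|\psi_s|^2\,dx$ (bounded away from zero by the normalization), I conclude that $\big|\mu_s/s\big|$ is bounded for $s\in\tilde{\Upsilon}$. More compactly, combining $\langle\psi_s,\tilde{Q}_s\psi_s\rangle$ with the $e^{-\mu_s\sigma_s}$-term collapses the identity to $\mu_s\int_\Omega|\psi_s|^2\,dx=-\int_\Omega|\nabla\psi_s|^2\,dx-d\,e^{-\mu_s\sigma_s}\int_\Omega u_{01}(s)|\nabla\psi_s|^2\,dx+O(s)$, whose modulus is $O(s)$ once $\|\nabla\psi_s\|^2=O(s)$ is known.
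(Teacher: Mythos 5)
Your proof is correct, and it follows the route the paper itself intends: the paper states Lemma \ref{le11} without proof, deferring to the ``arguments as in Section 3'', i.e.\ the self-adjoint comparison scheme of Lemma \ref{le4}. What you add, and what makes your write-up more complete than a verbatim transcription, is the sharpening $\|\nabla\psi_s\|_{\mathcal{Y}_{\mathbb{C}}}^2=O(s)$ of Lemma \ref{le10}, and this addition is not optional. In Lemma \ref{le4} the division is by $\vartheta_{\lambda}^{*}$ and the smallness is carried by the steady state itself ($u_{\lambda}^{*}/\vartheta_{\lambda}^{*}\to\varphi_{1}$, so $u_{\lambda}^{*}$, $\nabla u_{\lambda}^{*}$, $\triangle u_{\lambda}^{*}$ are all $O(\vartheta_{\lambda}^{*})$, and the bare bound $\|\nabla\psi_{\lambda}\|^{2}\le\mathbf{C}$ of Lemma \ref{le3} suffices), whereas on the $\Gamma_{u_{1}}$ branch $u_{01}(s)\to u_{1}\neq 0$, so the term $d\big[e^{-\mu_{s}\sigma_{s}}-1\big]\int_{\Omega}u_{01}(s)|\nabla\psi_{s}|^{2}\,dx$ would not be $O(s)$ from Lemma \ref{le10} alone; your diagnosis of exactly where the two branches differ is right. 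Your derivation of the sharpened bound is also sound: every driving term in the real-part energy identity carries a factor $\lambda_{01}(s)=O(s)$, $\partial_{\overrightarrow{n}}u_{01}(s)=O(s)$ or $\triangle u_{01}(s)=O(s)$ (the last read off from the interior equation, with $1+du_{01}(s)$ bounded below by $(\tilde{A3})$ and $\nabla u_{01}(s)=O(s)$ since $u_{1}$ is constant and $u_{01}(s)-u_{1}=O(s)$ in $W^{2,p}\hookrightarrow C^{1}$); condition $(\tilde{A3})$ absorbs $|d|\int_{\Omega}u_{01}(s)|\nabla\psi_{s}|^{2}\,dx$ into the left-hand side, and $Re\{\mu_{s}\}\ge 0$ lets you discard $-Re\{\mu_{s}\}$ --- indeed the same inequality already yields $0\le Re\{\mu_{s}\}=O(s)$, leaving only $Im\{\mu_{s}\}$ to the concluding estimate. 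One further merit: your compact closing identity obtains $|\mu_{s}|=O(s)$ by explicit integration by parts and never actually invokes the sign $\langle{\psi_{s},\tilde{Q}_{s}\psi_{s}}\rangle\le 0$; that claim (inherited verbatim from the paper's own Lemma \ref{le4}) is delicate, because $u_{01}(s)$ satisfies $\partial_{\overrightarrow{n}}u=\lambda_{01}(s)r(x)g(u_{01}(s))$ rather than the linearized boundary condition $\partial_{\overrightarrow{n}}\psi=\lambda_{01}(s)r(x)g_{u}(u_{01}(s))\psi$ under which $\tilde{Q}_{s}$ is posed, so $\tilde{Q}_{s}u_{01}(s)=0$ holds only as an interior identity and the principal-eigenvalue assertion would need matching boundary conditions; your route, which sidesteps it, is therefore the safer one.
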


In what follows, we give some results on the existence of the zero eigenvalue of (\ref{eq63}).
For convenience, define
\begin{equation}\label{eq65}
\tilde{m}(s,\mu,\sigma)\psi:=
\left(\begin{array}{c}
   \tilde{\triangle}(s,\mu,\sigma)\psi\\
   \partial_{\overrightarrow{n}}\psi-{\lambda_{01}(s)}r(x)g_{u}(u_{01}(s))\psi
\end{array}\right).
\end{equation}
As $(A2)$ holds, it is not difficult to find that $\left \langle {\bar{l},T_{uu}(u_{1},0)[1,\psi_{*}]} \right \rangle_{2}=0$, which implies
$\left \langle {\bar{l},T_{\lambda{u}}(u_{1},0)[1]} \right \rangle_{2}\neq{0}$; and  $\left \langle {\bar{l},T_{\lambda{u}}(u_{1},0)[1]} \right \rangle_{2}=0$, which means
$\left \langle {\bar{l},T_{uu}(u_{1},0)[1,\psi_{*}]} \right \rangle_{2}\neq{0}$.

\begin{lemma}\label{le12}
Suppose that $(A0)$, $(A1)$, $(A2)$, $(\tilde{A3})$ and $d\triangle{\eta_{1}}=0$ are satisfied.
\begin{itemize}
\item[(i)] For $\forall(\sigma,s)\in{\mathbb{R}}\setminus{\mathbb{R_{-}}}\times\tilde{\Upsilon}$, if $\left \langle {\bar{l},T_{uu}(u_{1},0)[1,\psi_{*}]} \right \rangle_{2}=0$,
then $0$ is not a zero eigenvalue of (\ref{eq63}).

\item[(ii)] For $\forall(\sigma,s)\in{\mathbb{R}}\setminus{\mathbb{R_{-}}}\times\tilde{\Upsilon}$, if $\left \langle {\bar{l},T_{\lambda{u}}(u_{1},0)[1]} \right \rangle_{2}=0$ and
$T_{\lambda{u}}(u_{1},0)[1]\neq{0}$, then $0$ is a zero eigenvalue of (\ref{eq63}).
\end{itemize}
\end{lemma}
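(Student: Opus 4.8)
The plan is to mirror the argument used for Lemma \ref{le5}, transplanting it from the bifurcation point $(0,\lambda_1)$ on $\Gamma_0$ to the point $(u_1,0)$ on $\Gamma_{u_1}$, with the constant function $1$ now playing the role of the kernel direction $\varphi_1$ and the parameter $s$ playing the role of $\vartheta_\lambda^*$. The first thing I would record is that at $\mu=0$ one has $e^{-\mu\sigma}=1$, so the delay drops out and the operator $\tilde m(s,0,\sigma)$ in (\ref{eq65}) coincides with the Fréchet derivative $D_uT(u_{01}(s),\lambda_{01}(s))$; in particular, the question of whether $0$ is a zero eigenvalue of (\ref{eq63}) is independent of $\sigma$ and is precisely the question of whether $D_uT$ is singular along the curve $\Gamma_\Xi$. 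At $s=0$ this operator is $T_u(u_1,0)$ of (\ref{eq21}), whose kernel is $\mathrm{span}\{1\}$ and whose range is characterized by $\langle\bar l,\cdot\rangle_2$.

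The key computation is the limit
\[
\hat 1:=\lim_{s\to0}\frac{1}{s}\,\tilde m(s,0,\sigma)\,1
=T_{uu}(u_1,0)[u_{01}'(0),1]+\lambda_{01}'(0)\,T_{\lambda u}(u_1,0)[1],
\]
obtained by differentiating $D_uT(u_{01}(s),\lambda_{01}(s))1$ at $s=0$ and using $u_{01}'(0)=\eta_1\cdot1$ and $\lambda_{01}'(0)=1$ from Theorem \ref{th2.2}. Because the kernel direction is constant and $\lambda=0$, the quadratic self-interaction $T_{uu}(u_1,0)[u_{01}'(0),1]$ reduces to a multiple of $d\triangle\eta_1$ and hence vanishes by hypothesis, leaving $\hat 1=T_{\lambda u}(u_1,0)[1]$. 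Pairing against $\bar l$ and invoking $\langle\bar l,T_{uu}(u_1,0)[1]^2\rangle_2=0$ gives $\langle\bar l,\hat 1\rangle_2=\langle\bar l,T_{\lambda u}(u_1,0)[1]\rangle_2$. I would then recall from the Hessian $G_0$ in (\ref{eq27}) the identity $\xi_*=\langle\bar l,T_{\lambda u}(u_1,0)[1]\rangle_2+\langle\bar l,T_{uu}(u_1,0)[1,\psi_*]\rangle_2$, which is nonzero by $(A2)$; this is exactly what makes the two halves of the lemma dichotomous.

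For part $(i)$, the hypothesis $\langle\bar l,T_{uu}(u_1,0)[1,\psi_*]\rangle_2=0$ forces $\langle\bar l,\hat 1\rangle_2=\xi_*\neq0$, so $\hat 1\notin\mathcal{R}(T_u(u_1,0))$. Assuming to the contrary that $0$ is a zero eigenvalue, I would take a normalized family $\psi_s=\alpha_s1+s\beta_s$ (with $\beta_s\in(\mathcal{X}_2)_{\mathbb C}$, $\alpha_s\ge0$) solving $\tilde m(s,0,\sigma)\psi_s=0$, divide by $s$, and let $s\to0$ to obtain $\alpha_0\hat 1+T_u(u_1,0)\beta_0=0$; applying $\langle\bar l,\cdot\rangle_2$ annihilates the second term and yields $\alpha_0\xi_*=0$, so $\alpha_0=0$, and then $\beta_0=0$ because $T_u(u_1,0)$ is invertible on $(\mathcal{X}_2)_{\mathbb C}$. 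The limiting eigenfunction then vanishes, contradicting the normalization, whence $0$ is not a zero eigenvalue. For part $(ii)$, the hypothesis $\langle\bar l,T_{\lambda u}(u_1,0)[1]\rangle_2=0$ gives $\langle\bar l,\hat 1\rangle_2=0$, i.e. $\hat 1=T_{\lambda u}(u_1,0)[1]\in\mathcal{R}(T_u(u_1,0))$, while $T_{\lambda u}(u_1,0)[1]\neq0$ ensures $\hat 1\neq0$; hence there is $\beta^*\in(\mathcal{X}_2)_{\mathbb C}\setminus\{0\}$ with $\hat 1=-T_u(u_1,0)\beta^*$. Substituting the ansatz $\psi=1+s\beta$ into $\tilde m(s,0,\sigma)\psi=0$ and dividing by $s$ produces $\tilde H(\beta,s):=\tfrac1s\tilde m(s,0,\sigma)1+\tilde m(s,0,\sigma)\beta$ with $\tilde H(\beta^*,0)=0$ and $\tilde H_\beta(\beta^*,0)=T_u(u_1,0)$ invertible on $(\mathcal{X}_2)_{\mathbb C}$; the implicit function theorem then yields $\beta_s$ with $\tilde H(\beta_s,s)\equiv0$, so $\psi_s=1+s\beta_s\neq0$ lies in the kernel for every $\sigma$, i.e. $0$ is a zero eigenvalue.

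The step I expect to be most delicate is the justification of the limit defining $\hat 1$ together with the passage to the limit in part $(i)$: one must verify that $s\mapsto D_uT(u_{01}(s),\lambda_{01}(s))$ is differentiable with the stated derivative (which rests on the $C^1$ regularity of $s\mapsto(u_{01}(s),\lambda_{01}(s))$ from Theorem \ref{th2.2} and the $C^{1,\tilde\alpha}$ assumptions in $(A0)$), and that the family $\beta_s$ remains bounded so that the limit identity $\alpha_0\hat 1+T_u(u_1,0)\beta_0=0$ is legitimate; here I would lean on the a priori estimates of Lemmas \ref{le10} and \ref{le11}. The clean cancellation $T_{uu}(u_1,0)[u_{01}'(0),1]=0$, which is exactly where the assumption $d\triangle\eta_1=0$ is invoked, is what reduces $\hat 1$ to $T_{\lambda u}(u_1,0)[1]$ and lets the single constraint $(A2)$ drive both conclusions.
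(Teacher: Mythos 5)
Your proposal is correct and follows essentially the same route as the paper: the same decomposition $\psi_s=\alpha_s\cdot 1+s\beta_s$ with the normalization forcing a contradiction in part (i), the same limit computation $\lim_{s\to 0}\frac{1}{s}\tilde m(s,0,\sigma)\cdot 1=T_{\lambda u}(u_1,0)[1]+(d\triangle\eta_1,0)^{T}$ killed by the hypothesis $d\triangle\eta_1=0$, the same dichotomy extracted from $(A2)$ via $\xi_*=\langle\bar l,T_{\lambda u}(u_1,0)[1]\rangle_2+\langle\bar l,T_{uu}(u_1,0)[1,\psi_*]\rangle_2\neq 0$, and the same implicit-function-theorem construction of $\psi=1+s\tilde\beta_s$ in part (ii). Your explicit remarks that the delay drops out at $\mu=0$ and that the limit passage needs the $C^1$ regularity of the curve and boundedness of $\beta_s$ (via Lemmas \ref{le10}--\ref{le11}) are sound refinements of points the paper leaves implicit, not a different argument.
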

\begin{proof}
$(i)$ Suppose  that $0$ is the zero eigenvalue of (\ref{eq63}), then there exists some $\psi\in{\mathcal{X}_{\mathbb{C}}\setminus{\{0\}}}$
with $(\sigma,s)\in{\mathbb{R}}\setminus{\mathbb{R_{-}}}\times\tilde{\Upsilon}$
such that $\tilde{m}(s,0,\sigma)\psi=0$. This, together with $\tilde{m}(0,0,\sigma)=T_{u}(u_{1},0)$, the decomposition of space $\mathcal{X}_{\mathbb{C}}=(\mathcal{N}(T_{u}(u_{1},0)))_{\mathbb{C}}\oplus({\mathcal{X}_{2}})_{\mathbb{C}}$
and $\mathcal{N}(T_{u}(u_{1},0))=\text{span}\{1\}$,  implies that $\psi$ takes the following after ignoring a scalar factor
\begin{eqnarray}\label{eq66}
\begin{cases}
\psi:=\psi_{s}=\tilde{\alpha}_{s}+s\tilde{\beta}_{s}, ~~\tilde{\beta}_{s}\in({\mathcal{X}_{2}})_{\mathbb{C}},
~~\tilde{\alpha}_{s}\geq 0,\\
\|\psi_{s}\|^{2}_{\mathcal{Y}_{\mathbb{C}}}=
\tilde{\alpha}_{s}^{2}\big|{\Omega}\big|+s^{2}\|{\tilde{\beta}_{s}}\|^{2}_{\mathcal{Y}_{\mathbb{C}}}
=\big|{\Omega}\big|,
\end{cases}
\end{eqnarray}
then $\tilde{m}(s,0,\sigma)\psi_{s}=
\tilde{\alpha}_{s}\tilde{m}(s,0,\sigma)+s\tilde{m}(s,0,\sigma)\tilde{\beta}_{s}=0$. Then we can find that
\begin{eqnarray}\label{eq67}
\displaystyle\lim_{s\rightarrow {0}}\frac{1}{s}\tilde{m}(s,0,\sigma)\cdot{1}=T_{\lambda{u}}(u_{1},0)[1]+(d\triangle{\eta_{1}},0)^{T}=T_{\lambda{u}}(u_{1},0)[1],
\end{eqnarray}
thus, we have $\tilde{\alpha}_{0}T_{\lambda{u}}(u_{1},0)[1]+T_{u}(u_{1},0)\tilde{\beta}_{0}=0$.
It follows from $\left \langle {\bar{l},T_{uu}(u_{1},0)[1,\psi_{*}]} \right \rangle_{2}=0$
and $(A2)$ that we get $\left \langle {\bar{l},T_{\lambda{u}}(u_{1},0)[1]} \right \rangle_{2}\neq{0}$,
which means that $T_{\lambda{u}}(u_{1},0)[1]\notin\mathcal{R}(T_{u}(u_{1},0))$. Further, we obtain $\tilde{\alpha}_{0}=0$.
Therefore, it follows,  from the fact that $\tilde{\beta}_{0}\in({\mathcal{X}_{2}})_{\mathbb{C}}$ and $T_{u}(u_{1},0)$ is invertible
as it is restricted in $({\mathcal{X}_{2}})_{\mathbb{C}}$,  that  $\tilde{\beta}_{0}\equiv{0}$. This implies that
$\displaystyle\lim_{s\rightarrow {0}}\tilde{m}(s,0,\sigma)\psi_{s}=0$ has only the zero solution, which contradicts
$\psi_{s}\in{\mathcal{X}_{\mathbb{C}}\setminus{\{0\}}}$. Hence, $0$ is not a zero eigenvalue of (\ref{eq63}).

$(ii)$  In order to verify that $0$ is a zero eigenvalue of (\ref{eq63}), it suffices to find a solution of $(s,\sigma,\psi)\in\tilde{\Upsilon}\times{\mathbb{R}}\setminus{\mathbb{R_{-}}}\times{\mathcal{X}_{\mathbb{C}}\setminus{\{0\}}}$ to the equation $\tilde{m}(s,0,\sigma)\psi=0$.
Substituting $\psi=1+s\tilde{\beta}$ with $\tilde{\beta}\in({\mathcal{X}_{2}})_{\mathbb{C}}$ into $\tilde{m}(s,0,\sigma)\psi=0$, we yield
\begin{eqnarray}\label{eq68}
\tilde{H}(\tilde{\beta},s):=\frac{1}{s}\tilde{m}(s,0,\sigma)\cdot{1}+\tilde{m}(s,0,\sigma)\tilde{\beta}=0,
\end{eqnarray}
which means that $\tilde{H}(\tilde{\beta},0)=T_{\lambda{u}}(u_{1},0)[1]+T_{u}(u_{1},0)\tilde{\beta}$.
If $\left \langle {\bar{l},T_{\lambda{u}}(u_{1},0)[1]} \right \rangle_{2}=0$, then $T_{\lambda{u}}(u_{1},0)[1]\in\mathcal{R}(T_{u}(u_{1},0))$.
This, together with $T_{\lambda{u}}(u_{1},0)[1]\neq{0}$, implies that there exists $\tilde{\beta}^{*}\in({\mathcal{X}_{2}})_{\mathbb{C}}\setminus{\{0\}}$ such that $T_{\lambda{u}}(u_{1},0)[1]=-T_{u}(u_{1},0)\tilde{\beta}^{*}$,
$i.e.$, $\tilde{H}(\tilde{\beta}^{*},0)=0$.
Note that $T_{\lambda{u}}(u_{1},0)[1]\neq{0}$ and $\tilde{H}_{\tilde{\beta}}(\tilde{\beta}^{*},0)=T_{u}(u_{1},0)$ is invertible
as it is restricted in $({\mathcal{X}_{2}})_{\mathbb{C}}$, then it follows from the implicit function theorem that we have a continuously differentiable mapping
$s\mapsto{\tilde{\beta}_{s}}$ from $\tilde{\Upsilon}$ to $({\mathcal{X}_{2}})_{\mathbb{C}}$ such that $\tilde{\beta}_{0}=\tilde{\beta}^{*}$, and $\tilde{H}(\tilde{\beta}_{s},s)\equiv{0}$. This implies that $\tilde{m}(s,0,\sigma)\psi=0$ has a nontrivial solution $\psi=1+s\tilde{\beta}_{s}\in{\mathcal{X}_{\mathbb{C}}\setminus{\{0\}}}$
for each $(\sigma,s)\in{\mathbb{R}}\setminus{\mathbb{R_{-}}}\times\tilde{\Upsilon}$. Hence, $0$ is a zero eigenvalue of (\ref{eq63}).
\end{proof}

Then we obtain the following results from Lemma \ref{le12} and Theorem \ref{th2.2}.

\begin{theorem}\label{th4.1}
Suppose that $(A0)$, $(A1)$, $(A2)$, $(\tilde{A3})$ and $d\triangle{\eta_{1}}=0$ are satisfied.
\begin{itemize}
\item[(i)] If $\left \langle {\bar{l},T_{uu}(u_{1},0)[1,\psi_{*}]} \right \rangle_{2}=0$, then there exist a constant $\varepsilon_{1}\in\mathbb{R_{+}}$ such that for any $s\in\tilde{\Upsilon}=(-\varepsilon_{1},0)\cup(0, \varepsilon_{1})$,
model (\ref{eq3}) has exactly a spatially nontrivial steady state solution, and $0$ is not a zero eigenvalue of the linearized equation (\ref{eq63}) of (\ref{eq3}).

\item[(ii)] If $\left \langle {\bar{l},T_{\lambda{u}}(u_{1},0)[1]} \right \rangle_{2}=0$ and
$T_{\lambda{u}}(u_{1},0)[1]\neq{0}$, then there exist a constant $\varepsilon_{1}\in\mathbb{R_{+}}$ such that for any $s\in\tilde{\Upsilon}=(-\varepsilon_{1},0)\cup(0, \varepsilon_{1})$,
model (\ref{eq3}) has exactly a spatially nontrivial steady state solution, and $0$ is a zero eigenvalue of the linearized equation (\ref{eq63}) of (\ref{eq3}).
\end{itemize}
\end{theorem}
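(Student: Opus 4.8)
The plan is to derive Theorem \ref{th4.1} as a direct synthesis of the local bifurcation result Theorem \ref{th2.2} and the zero-eigenvalue characterization Lemma \ref{le12}, both of which are already established under precisely the hypotheses assumed here. First I would invoke part (ii) of Theorem \ref{th2.2}: since $(A0)$, $(A1)$ and $(A2)$ hold, the solution set of (\ref{eq4}) near $(u,\lambda)=(u_{1},0)$ splits into the trivial curve $\Gamma_{u_{1}}$ and the nontrivial curve $\Gamma_{\Xi}$. Restricting attention to $s\in\tilde{\Upsilon}=(-\varepsilon_{1},0)\cup(0,\varepsilon_{1})$, the point $(u_{01}(s),\lambda_{01}(s))$ on $\Gamma_{\Xi}$ is the unique spatially nonhomogeneous steady state, which establishes the ``exactly a spatially nontrivial steady state solution'' assertion in both parts.

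For the eigenvalue conclusions I would read off the two cases directly from Lemma \ref{le12}. In case (i), the hypothesis $\langle{\bar{l},T_{uu}(u_{1},0)[1,\psi_{*}]}\rangle_{2}=0$ is exactly the assumption of Lemma \ref{le12}(i), which yields that $0$ is not a zero eigenvalue of (\ref{eq63}). In case (ii), the hypotheses $\langle{\bar{l},T_{\lambda{u}}(u_{1},0)[1]}\rangle_{2}=0$ together with $T_{\lambda{u}}(u_{1},0)[1]\neq{0}$ match the assumption of Lemma \ref{le12}(ii), giving that $0$ is a zero eigenvalue.

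The one point that deserves care --- and the only genuine content beyond citing the two prior results --- is verifying that the split hypotheses of Lemma \ref{le12} are consistent with $(A2)$. Recalling from the computation of $G_{0}$ in the proof of Theorem \ref{th2.2} that
\begin{eqnarray*}
&&\langle{\bar{l},T_{uu}(u_{1},0)[1,\psi_{*}]}\rangle_{2}+\langle{\bar{l},T_{\lambda{u}}(u_{1},0)[1]}\rangle_{2}\\
&&\quad= d\int_{\Omega}\triangle{\psi_{*}}\,dx+\int_{\Omega}f(x,u_{1})\,dx+u_{1}\int_{\Omega}f_{u}(x,u_{1})\,dx+(1+du_{1})g_{u}(u_{1})\int_{\partial{\Omega}}r(x)\,dS,
\end{eqnarray*}
condition $(A2)$ asserts precisely that this sum is nonzero. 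Hence the two pairings cannot vanish simultaneously: in case (i) the vanishing of the first forces the second to be nonzero, so Lemma \ref{le12}(i) genuinely applies, and in case (ii) the vanishing of the second forces the first to be nonzero, consistent with the standing requirement $T_{\lambda{u}}(u_{1},0)[1]\neq{0}$ placing us squarely in Lemma \ref{le12}(ii). I expect no substantive obstacle here; the proof is essentially a bookkeeping assembly, with the dichotomy supplied by $(A2)$ being the only subtlety worth spelling out.
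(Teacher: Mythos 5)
Your proposal is correct and is essentially the paper's own argument: the paper derives Theorem \ref{th4.1} directly from Theorem \ref{th2.2}(ii) and Lemma \ref{le12}, and the dichotomy you verify via $(A2)$ (that $\left\langle \bar{l},T_{uu}(u_{1},0)[1,\psi_{*}]\right\rangle_{2}$ and $\left\langle \bar{l},T_{\lambda u}(u_{1},0)[1]\right\rangle_{2}$ cannot vanish simultaneously, since their sum is exactly $\xi_{*}$) is precisely the observation the paper records just before Lemma \ref{le12}. Nothing is missing.
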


Next, we are about to consider the existence of purely imaginary eigenvalues of the linearized equation (\ref{eq63}) of (\ref{eq3}).
Since $(\sigma,s)\in{\mathbb{R}}\setminus{\mathbb{R_{-}}}\times\tilde{\Upsilon}$, $\mu=i\tilde{\omega} (\tilde{\omega}>0)$  is an eigenvalue of (\ref{eq63}) with eigenfunction $\psi$
if and only if there exists any $(s,\tilde{\omega},\sigma,\psi)\in\tilde{\Upsilon}\times{\mathbb{R_{+}}}\times{\mathbb{R}}\setminus{\mathbb{R_{-}}}\times{\mathcal{X}_{\mathbb{C}}\setminus{\{0\}}}$
such that $\tilde{m}(s,i\tilde{\omega},\sigma)\psi=0$. To simplify, unless otherwise specified, we always suppose that
$(A0)$, $(A1)$, $(A2)$, $(\tilde{A3})$, $d\triangle{\eta_{1}}=0$ and $\left \langle {\bar{l},T_{uu}(u_{1},0)[1,\psi_{*}]} \right \rangle_{2}=0$ are satisfied.

In the view of Lemma \ref{le11}, let $\mu=i\tilde{\omega}=is\tilde{\delta}$, then we rewrite $\tilde{m}(s,is\tilde{\delta},\sigma)\psi=0$
as $\tilde{M}(s,\tilde{\delta},\tilde{\theta})\psi=0$, where $\tilde{M}(s,\tilde{\delta},\tilde{\theta}): {\mathcal{X}_{\mathbb{C}}}\rightarrow{\mathcal{Y}_{\mathbb{C}}}$ is given by
\begin{small}
\begin{equation}\label{eq69}
\tilde{M}(s,\tilde{\delta},\tilde{\theta})\psi=
\left(\begin{array}{c}
   \triangle{\psi}+d\nabla\cdot({\psi}\nabla{u_{01}(s)})+d\nabla\cdot({u_{01}(s)}\nabla{\psi})e^{-i\tilde{\theta}}
   +{\lambda_{01}(s)}u_{01}(s)f_{u}(x,u_{01}(s))\psi\\
    ~~~~~~~~~~~~~~~+{\lambda_{01}(s)}f(x,u_{01}(s))\psi-is\tilde{\delta}{\psi}\\

   \partial_{\overrightarrow{n}}\psi-{\lambda_{01}(s)}r(x)g_{u}(u_{01}(s))\psi
\end{array}\right).
\end{equation}
\end{small}

This, together with $\tilde{M}(0,\tilde{\delta},\tilde{\theta})=T_{u}(u_{1},0)+(du_{1}[e^{-i\tilde{\theta}}-1]\triangle,0)^{T}$,  the decomposition of space $\mathcal{X}_{\mathbb{C}}=(\mathcal{N}(T_{u}(u_{1},0)))_{\mathbb{C}}\oplus({\mathcal{X}_{2}})_{\mathbb{C}}$
and $\mathcal{N}(T_{u}(u_{1},0))=\text{span}\{{1}\}$, implies that $\psi$ takes the following form
$\psi=1+s\tilde{\beta}$ with $\tilde{\beta}\in({\mathcal{X}_{2}})_{\mathbb{C}}$. Further, we have
$\tilde{E}(\tilde{\beta},\tilde{\delta},\tilde{\theta},s)=0$, where $\tilde{E}: ({\mathcal{X}_{2}})_{\mathbb{C}}\times{\mathbb{R}}\times[0,2\pi)\times\tilde{\Upsilon}\rightarrow{\mathcal{Y}_{\mathbb{C}}}$ is denoted by
\begin{eqnarray}\label{eq70}
\tilde{E}(\tilde{\beta},\tilde{\delta},\tilde{\theta},s)=\frac{1}{s}\tilde{M}(s,\tilde{\delta},\tilde{\theta})\cdot{1}+\tilde{M}(s,\tilde{\delta},\tilde{\theta})\tilde{\beta}.
\end{eqnarray}

Next we only consider $\sigma=0$, $i.e.$, $\tilde{\theta}=0$,  since the case of $\sigma\in{\mathbb{R_{+}}}$ is still to be solved. Especially, we get $\tilde{E}(\tilde{\beta},\tilde{\delta},0,0)=\tilde{P}(\tilde{\delta},0)+T_{u}(u_{1},0)\tilde{\beta}$, where $\tilde{P}(\tilde{\delta},0)=\displaystyle\lim_{s\rightarrow 0}\frac{1}{s}\tilde{M}(s,\tilde{\delta},0)\cdot{1}$,
$i.e.$,
\begin{eqnarray}\label{eq71}
\begin{split}
\tilde{P}(\tilde{\delta},0)
=
\left(\begin{array}{c}
   u_{1}f_{u}(x,u_{1})+f(x,u_{1})-i\tilde{\delta}\\
   -r(x)g_{u}(u_{1})
\end{array}\right)
+
\left(\begin{array}{c}
   d\triangle{\eta_{1}}\\
   0
\end{array}\right)
:=\left(\begin{array}{c}
   \tilde{P}_{1}(\tilde{\delta},0)\\
   \tilde{P}_{2}(\tilde{\delta},0)
\end{array}\right).
\end{split}
\end{eqnarray}

Define $\tilde{F}: {\mathbb{R}}\times[0,2\pi)\rightarrow{\mathbb{R}}$ as $\tilde{F}(\tilde{\delta},0)=\langle{\bar{l},\tilde{P}(\tilde{\delta},0)}\rangle_{2}$. Further, we obtain
\begin{eqnarray}\label{eq73}
\begin{split}
\tilde{F}(\tilde{\delta},0)=\int_{\Omega}\tilde{P}_{1}dx-(1+du_{1})\int_{\partial{\Omega}}\tilde{P}_{2}dS=-i\tilde{\delta}\big|{\Omega}\big|+
\left \langle {\bar{l},T_{\lambda{u}}(u_{1},0)[1]} \right \rangle_{2}\neq{0},
\end{split}
\end{eqnarray}
which is a contradiction. Hence, (\ref{eq63}) has no purely imaginary eigenvalue for each $(\sigma,s)\in{\{0\}}\times\tilde{\Upsilon}$.

Now we will consider the stability of the bifurcated steady state solution $u_{01}(s)$ with $\lambda_{01}(s)$ established in Theorem \ref{th2.2} for $s\in\tilde{\Upsilon}$ as $\sigma=0$.

\begin{lemma}\label{le13}
Suppose that $(A0)$, $(A1)$, $(A2)$, $(\tilde{A3})$, $d\triangle{\eta_{1}}=0$ and $\left \langle {\bar{l},T_{uu}(u_{1},0)[1,\psi_{*}]} \right \rangle_{2}=0$ are satisfied, then for $s\in \tilde{\Upsilon}$ satisfying $s\cdot\left \langle {\bar{l},T_{\lambda{u}}(u_{1},0)[1]} \right \rangle_{2}<0$
\text{$($resp., $s\cdot\left \langle {\bar{l},T_{\lambda{u}}(u_{1},0)[1]} \right \rangle_{2}>0)$}, the eigenvalues of (\ref{eq63}) have only eigenvalues with negative real parts \text{$($resp., at least one eigenvalue}
{ with a positive real part$)$}.
\end{lemma}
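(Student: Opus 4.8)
The plan is to transplant the proof of Lemma \ref{le9} to the branch $\Gamma_{\Xi}$ of Theorem \ref{th2.2}, parametrised by $u_{01}(s)=u_{1}+\eta_{1}s+s\zeta_{1}(s)$ and $\lambda_{01}(s)=s+s\xi_{1}(s)$ with $\lambda_{01}'(0)=1$, and to track the single eigenvalue that splits off from the simple zero eigenvalue of $T_{u}(u_{1},0)$ as $\sigma=0$. First I would record the spectral picture at $s=0$. Since $\sigma=0$ makes $e^{-\mu\sigma}\equiv1$, the problem $\tilde{m}(0,\mu,0)\psi=0$ reduces to the standard elliptic eigenvalue problem $(1+du_{1})\triangle\psi=\mu\psi$ with Neumann condition $\partial_{\overrightarrow{n}}\psi=0$; under $(\tilde{A3})$ one has $1+du_{1}>0$, so the spectrum is $\{0\}\cup\{-(1+du_{1})\nu_{k}\}_{k\geq1}$ with $0$ simple (eigenfunction $1$) and every other eigenvalue strictly negative. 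For $\sigma=0$ the linearisation (\ref{eq63}) is delay-free, so its operator has compact resolvent and discrete spectrum with real parts accumulating only at $-\infty$; by spectral perturbation in $s$, every eigenvalue except the one continuing from $0$ stays uniformly in the open left half-plane for $|s|$ small. Together with the already-established facts that for $\sigma=0$ there are no purely imaginary eigenvalues (from $\tilde{F}(\tilde{\delta},0)\neq0$ in (\ref{eq73})) and that $0$ is not an eigenvalue when $s\neq0$ (Lemma \ref{le12}$(i)$), this shows the number of eigenvalues with positive real part is locally constant on each of $(-\varepsilon_{1},0)$ and $(0,\varepsilon_{1})$ and is decided entirely by the branch $\mu_{s}$ with $\mu_{0}=0$ and eigenfunction $\psi_{s}\to1$.

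The heart of the proof is the evaluation of $\displaystyle\lim_{s\to0}\mu_{s}/s$. Mirroring Lemma \ref{le9}, I take $s_{j}\to0$ with eigenpairs $(\mu_{j},\psi_{j})$, write $\psi_{j}=1+s_{j}\tilde{\beta}_{j}$ with $\tilde{\beta}_{j}\in(\mathcal{X}_{2})_{\mathbb{C}}$ and $\psi_{j}\to1$, and apply $\langle{\bar{l},\cdot}\rangle_{2}$ to $\tilde{m}(s_{j},\mu_{j},0)\psi_{j}=0$. Because $\langle{\bar{l},\cdot}\rangle_{2}$ annihilates $\mathcal{R}(T_{u}(u_{1},0))$ and $\tilde{m}(0,0,0)=T_{u}(u_{1},0)$, the zeroth-order contribution drops and only first-order terms survive. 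At $\sigma=0$ the parameter $\mu$ enters $\tilde{m}$ only through $-\mu\psi$ in the interior, contributing $-\mu_{j}\int_{\Omega}\psi_{j}\,dx\to-\mu_{j}|\Omega|$; the explicit $s$-dependence gives, by (\ref{eq67}) and $d\triangle\eta_{1}=0$, the limit $\frac{1}{s_{j}}\tilde{m}(s_{j},0,0)\cdot1\to T_{\lambda u}(u_{1},0)[1]$, whose pairing is $\langle{\bar{l},T_{\lambda u}(u_{1},0)[1]}\rangle_{2}$; and the term $s_{j}\langle{\bar{l},\tilde{m}(s_{j},\mu_{j},0)\tilde{\beta}_{j}}\rangle_{2}$ is $O(s_{j}^{2}+s_{j}\mu_{j})$ since $\langle{\bar{l},T_{u}(u_{1},0)\tilde{\beta}_{j}}\rangle_{2}=0$. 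Dividing by $s_{j}$ and passing to the limit (dominated convergence, exactly as in Lemma \ref{le9}) yields
\[
\lim_{s\to0}\frac{\mu_{s}}{s}=\frac{\langle{\bar{l},T_{\lambda u}(u_{1},0)[1]}\rangle_{2}}{|\Omega|}.
\]
Unlike Lemma \ref{le9}, no contribution of $\eta_{1}$ through the reaction terms appears, because those terms carry the factor $\lambda_{01}(s)$ which vanishes at $s=0$; this is exactly why the single quantity $\langle{\bar{l},T_{\lambda u}(u_{1},0)[1]}\rangle_{2}$ controls the sign.

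Finally, since $(A2)$ forces $\det(G_{0})<0$ in (\ref{eq27}), the standing hypothesis $\langle{\bar{l},T_{uu}(u_{1},0)[1,\psi_{*}]}\rangle_{2}=0$ makes $\langle{\bar{l},T_{\lambda u}(u_{1},0)[1]}\rangle_{2}\neq0$, so the limit above is a nonzero real number and $sign(Re\{\mu_{s}\})=sign(s\langle{\bar{l},T_{\lambda u}(u_{1},0)[1]}\rangle_{2})$ for small $|s|$. Hence $Re\{\mu_{s}\}<0$ and all eigenvalues are stable when $s\langle{\bar{l},T_{\lambda u}(u_{1},0)[1]}\rangle_{2}<0$, while $Re\{\mu_{s}\}>0$ when $s\langle{\bar{l},T_{\lambda u}(u_{1},0)[1]}\rangle_{2}>0$, which is precisely the assertion. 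The main obstacle is the bookkeeping in the duality computation: one must confirm that the zeroth-order term genuinely cancels via $\mathcal{R}(T_{u}(u_{1},0))=\{\tilde{y}:\langle{\bar{l},\tilde{y}}\rangle_{2}=0\}$, that the $s_{j}\tilde{\beta}_{j}$ correction is higher order, and that $\partial_{s}[\tilde{m}(s,0,0)\cdot1]|_{s=0}=T_{\lambda u}(u_{1},0)[1]$, where $d\triangle\eta_{1}=0$, $\lambda_{01}(0)=0$ and $\lambda_{01}'(0)=1$ are all used. A secondary point is the spectral-continuity argument guaranteeing that only $\mu_{s}$ can approach the imaginary axis for small $|s|$, so that the local count of unstable eigenvalues is faithfully reported by this single branch.
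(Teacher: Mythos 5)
Your proposal is correct and takes essentially the same route as the paper: the paper's proof likewise first invokes the absence of purely imaginary eigenvalues at $\sigma=0$, then pairs $\tilde{m}(s_{z}^{*},\mu_{z},0)\psi_{z}^{*}=0$ with $\bar{l}$ along a sequence $s_{z}^{*}\rightarrow 0$, $\psi_{z}^{*}\rightarrow 1$, and obtains exactly your limit $\big|\Omega\big|\lim_{z\rightarrow\infty}\mu_{z}/s_{z}^{*}=\left\langle \bar{l},T_{\lambda u}(u_{1},0)[1]\right\rangle_{2}$ in (\ref{eq74}), with the same use of $(A2)$ and $\left\langle \bar{l},T_{uu}(u_{1},0)[1,\psi_{*}]\right\rangle_{2}=0$ to get nondegeneracy and the sign conclusion. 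Your only supplement is to make explicit, via the Neumann spectrum of $(1+du_{1})\triangle$ at $s=0$ and spectral continuity, why only the branch through $0$ can approach the closed right half-plane, a point the paper leaves implicit behind the a priori estimates of Lemmas \ref{le10} and \ref{le11}.
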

\begin{proof}
From the above, we obtain that (\ref{eq63}) has no purely imaginary eigenvalue for $s\in\tilde{\Upsilon}$ as $\sigma=0$.

For each $z\in\mathbb{N}$, let $\big\{{s_{z}^{*}}\big\}_{z=1}^{\infty}\subseteq\tilde{\Upsilon}$ be a sequence, ${s_{z}^{*}}\rightarrow{0}$ as $z\rightarrow\infty$ such that
there exists an eigenvalue $\mu_{z}$ of (\ref{eq63}), and there exists a sequence $\big\{{\psi_{z}^{*}}\big\}_{z=1}^{\infty}\subseteq\mathcal{X}_{\mathbb{C}}\setminus{\{0\}}$, ${\psi_{z}^{*}}\rightarrow{1}$ as $z\rightarrow\infty$ such that
$\tilde{m}({s_{z}^{*}},\mu_{z},0){\psi_{z}^{*}}=0$, where
\begin{eqnarray}\label{eq73}
\begin{split}
   0=\triangle{\psi_{z}^{*}}+d\nabla\cdot({\psi_{z}^{*}}\nabla{u_{01}({s_{z}^{*}})})+d\nabla\cdot({u_{01}({s_{z}^{*}})}\nabla{\psi_{z}^{*}})
    +{\lambda_{01}({s_{z}^{*}})}u_{01}({s_{z}^{*}})f_{u}(x,u_{01}({s_{z}^{*}})){\psi_{z}^{*}}\\
    +{\lambda_{01}({s_{z}^{*}})}f(x,u_{01}({s_{z}^{*}})){\psi_{z}^{*}}-\mu_{z}{\psi_{z}^{*}}, ~~~~~~~~~~~~~~~~~~~~~~~~~~\text{in $\Omega$},\\
   0=\partial_{\overrightarrow{n}}{\psi_{z}^{*}}-{\lambda_{01}({s_{z}^{*}})}r(x)g_{u}(u_{01}({s_{z}^{*}})){\psi_{z}^{*}}, ~~~~~~~~~~~~~~~~~~~~~~~~~~~~~~~~~~~~~~~~~~~~~~~~~~~~\text{on $\partial{\Omega}$}.
\end{split}
\end{eqnarray}

Then it follows from $\displaystyle\lim_{z\rightarrow {\infty}}u_{01}({s_{z}^{*}})=u_{1}$ and $\langle{\bar{l},\tilde{m}({s_{z}^{*}},\mu_{z},0){\psi_{z}^{*}}}\rangle_{2}=0$ that
$\displaystyle\lim_{z\rightarrow {\infty}}\mu_{z}=0$ and
\begin{eqnarray}\label{eq74}
\begin{split}
\big|{\Omega}\big|\displaystyle\lim_{z\rightarrow {\infty}}\frac{\mu_{z}}{s_{z}^{*}}
=\left \langle {\bar{l},T_{\lambda{u}}(u_{1},0)[1]} \right \rangle_{2},
\end{split}
\end{eqnarray}
which means that $sign \Big(Re\big\{\mu_{z}\big\}\Big)=sign \Big({s_{z}^{*}}\cdot\left \langle {\bar{l},T_{\lambda{u}}(u_{1},0)[1]} \right \rangle_{2}\Big)$ for sufficiently large $z$. Hence, the proof is completed.
\end{proof}

According to the above discussion,  there are following results for the bifurcated steady state solution $u_{01}(s)$ with $\lambda_{01}(s)$ established in Theorem \ref{th2.2}.

\begin{theorem}\label{th4.2}
 Suppose that $(A0)$, $(A1)$, $(A2)$, $(\tilde{A3})$, $d\triangle{\eta_{1}}=0$ and $\left \langle {\bar{l},T_{uu}(u_{1},0)[1,\psi_{*}]} \right \rangle_{2}=0$ are satisfied.
\begin{itemize}
\item[(i)] For each $(\sigma,s)\in{\{0\}}\times\tilde{\Upsilon}$ satisfying
$s\cdot\left \langle {\bar{l},T_{\lambda{u}}(u_{1},0)[1]} \right \rangle_{2}>0$
and $\big|s\big|\ll{1}$,
(\ref{eq63}) has at least one eigenvalues with a positive real part, and the bifurcated steady state solution $u_{01}(s)$ with $\lambda_{01}(s)$ established in Theorem \ref{th2.2} is unstable.

\item[(ii)] For each $(\sigma,S)\in{\{0\}}\times\tilde{\Upsilon}$ satisfying
$s\cdot\left \langle {\bar{l},T_{\lambda{u}}(u_{1},0)[1]} \right \rangle_{2}<0$
and $\big|s\big|\ll{1}$,
    (\ref{eq63}) has only eigenvalues with negative real parts, and the bifurcated steady state solution $u_{01}(s)$ with $\lambda_{01}(s)$ established in Theorem \ref{th2.2} is linearly stable.
\end{itemize}
\end{theorem}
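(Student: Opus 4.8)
The plan is to obtain Theorem \ref{th4.2} as a direct consequence of the spectral analysis already assembled in Lemma \ref{le13}, combined with the a priori estimates of Lemmas \ref{le10} and \ref{le11} and the nonexistence of purely imaginary eigenvalues at $\sigma=0$. Since the linear stability of the bifurcated state $(u_{01}(s),\lambda_{01}(s))$ is governed, when $\sigma=0$, by the location in $\mathbb{C}$ of the spectrum of the undelayed eigenvalue problem (\ref{eq63}), it suffices to count the eigenvalues with nonnegative real part for $|s|\ll 1$ and read off the sign of the critical one.

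First I would localize all candidate unstable eigenvalues. By Lemma \ref{le11}, the ratio $\big|\mu_{s}/s\big|$ is bounded uniformly for $s\in\tilde{\Upsilon}$, so every eigenvalue $\mu_{s}$ of (\ref{eq63}) with $Re\{\mu_{s}\}\geq 0$ satisfies $\mu_{s}=O(s)\to 0$ as $s\to 0$; thus no eigenvalue can enter the closed right half-plane except through a shrinking neighborhood of the origin. Next I would recall from the discussion preceding the theorem (equations (\ref{eq69})--(\ref{eq70})) that for $\sigma=0$ the problem (\ref{eq63}) admits no purely imaginary eigenvalue, since $\tilde{F}(\tilde{\delta},0)=-i\tilde{\delta}|\Omega|+\langle{\bar{l},T_{\lambda{u}}(u_{1},0)[1]}\rangle_{2}\neq 0$. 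Consequently the number of eigenvalues with positive real part is locally constant in $s$ and can change only as $s$ passes through $0$.

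The decisive step is the asymptotic sign computation of Lemma \ref{le13}: along any sequence $s_{z}^{*}\to 0$ carrying eigenvalues $\mu_{z}$ with eigenfunctions converging to the constant $1$, the limit relation (\ref{eq74}) gives $|\Omega|\lim_{z\to\infty}\mu_{z}/s_{z}^{*}=\langle{\bar{l},T_{\lambda{u}}(u_{1},0)[1]}\rangle_{2}$, whence $\mathrm{sign}(Re\{\mu_{z}\})=\mathrm{sign}\big(s_{z}^{*}\cdot\langle{\bar{l},T_{\lambda{u}}(u_{1},0)[1]}\rangle_{2}\big)$ for $z$ large. Combining this with the localization above, for $|s|\ll 1$ the single eigenvalue near the origin has positive real part exactly when $s\cdot\langle{\bar{l},T_{\lambda{u}}(u_{1},0)[1]}\rangle_{2}>0$ and negative real part exactly when this product is negative, while all remaining spectrum stays strictly in the left half-plane by continuity from the unperturbed problem at $s=0$. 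There the operator reduces to $(1+du_{1})\triangle$ under Neumann conditions, whose eigenvalues are $-(1+du_{1})\nu_{k}$ with $\nu_{k}$ the Neumann eigenvalues of $-\triangle$; using $1+du_{1}>0$ from $(\tilde{A3})$, this gives a simple zero eigenvalue and the rest strictly negative. This yields instability in case (i) and linear stability in case (ii).

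I expect the main obstacle to be precisely the justification that no eigenvalue other than the critical branch crosses into $Re\geq 0$ for small $s$; this is exactly what forces the use of the a priori bound of Lemma \ref{le11}, which confines every eigenvalue with nonnegative real part to a neighborhood of the origin, so that the one-parameter asymptotics (\ref{eq74}) captures the full unstable count. The remaining verification---that the unperturbed spectrum at $s=0$ is nonpositive with a simple zero---is routine given $(\tilde{A3})$, and the conclusion then follows by the principle of linearized stability applied to the bifurcated branch of Theorem \ref{th2.2}.
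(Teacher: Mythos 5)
Your proposal is correct and follows essentially the same route as the paper, which deduces Theorem \ref{th4.2} directly from Lemma \ref{le13} together with the a priori bound of Lemma \ref{le11} and the nonexistence of purely imaginary eigenvalues at $\sigma=0$ established via $\tilde{F}(\tilde{\delta},0)\neq 0$. Your additional details---confining all eigenvalues with $Re\{\mu_{s}\}\geq 0$ to an $O(s)$ neighborhood of the origin and checking that the unperturbed spectrum at $s=0$ is $\{-(1+du_{1})\nu_{k}\}$ with a simple zero (using $1+du_{1}>0$ from $(\tilde{A3})$)---merely make explicit the counting step the paper leaves implicit in the phrase ``according to the above discussion.''
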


%

\begin{Example}\label{Example1}\
\

As for the application,  model (\ref{eq3}) can reduce to a prototypical memory-based diffusion model
with logistic growth and spatial heterogeneous resource subject to the nonlinear boundary condition
when $f(x,u)=\hat{m}(x)-u$ and  $g(u)=u^{2}$
\begin{equation}\label{eq75}
\begin{cases}
    \frac{\partial{u}}{\partial{t}}=\triangle{u}+d\nabla\cdot({u}\nabla{u_{\sigma}})+{\lambda}u(\hat{m}(x)-u),  &\text{$x \in \Omega, t >0 $},\\
     \partial_{\overrightarrow{n}}u={\lambda}r(x)u^{2},                           &\text{$x \in \partial{\Omega}, t >0 $},
  \end{cases}
\end{equation}
where $\hat{m}(x)$ is the local carrying capacity or the intrinsic growth rate that represents the situation of the resources,
the functions $f(x,u)$ and $g(u)$ satisfy those conditions in Sections 1-4.
\end{Example}

\begin{Example}\label{Example2}\
\

When we take  $f(x,u)=\hat{r}(x)\frac{\hat{k}-u}{\hat{k}+\hat{\gamma}(x)u}$ and  $g(u)=u(u-\hat{a})(1-u)$, then the model (\ref{eq3}) becomes
\begin{equation}\label{eq76}
\begin{cases}
    \frac{\partial{u}}{\partial{t}}=\triangle{u}+d\nabla\cdot({u}\nabla{u_{\sigma}})+{\lambda}u\hat{r}(x)\frac{\hat{k}-u}{\hat{k}+\hat{\gamma}(x)u},  &\text{$x \in \Omega, t >0 $},\\
     \partial_{\overrightarrow{n}}u={\lambda}r(x)u(u-\hat{a})(1-u),                           &\text{$x \in \partial{\Omega}, t >0 $},
  \end{cases}
\end{equation}
where $\hat{r}(x)$ describes the function of the growth rate of the population, $\hat{k}$ is  the constant carrying capacity of the habitat,
$\frac{\hat{r}(x)}{\hat{\gamma}(x)}$ is the replacement of mass in the population at saturation, $\hat{a}\in(0,1)$ is a constant,
the functions $f(x,u)$ and $g(u)$ satisfy those conditions in Sections 1-4.
\end{Example}

The above analysis in the previous sections could be applied to the two specific models, for brevity details are omitted here.

\section{Discussion}
In this paper, we propose a spatially heterogeneous single population model with the memory effect and nonlinear boundary condition.
Different from these previous results with the memory-based diffusion model, 
nontrivial steady state solutions are found to bifurcate  from trivial solutions $\Gamma_{0}$ and $\Gamma_{u_{1}}$, respectively,
and it is found that the stability switch and the Hopf bifurcation could be realized 
with the combined effects of the memory delay and the nonlinear boundary condition in the heterogeneous environment.
To be specific, the bifurcated steady state solution
from trivial solutions $\Gamma_{0}$, under the conditions that when the memory reaction term is stronger than the interaction of the interior reaction term and the boundary reaction term, experiences a single stability switch from stability to instability with the increase of the delayed memory value
via the Hopf bifurcation, while the bifurcated steady state solution from trivial solutions $\Gamma_{u_{1}}$, under some specific conditions, could also have the stability results
as $\sigma=0$. For the case of  $\sigma\in{\mathbb{R_{+}}}$,  it is still to be further solved. For the proposed model, there are still some interesting problems worthy of further
investigation.



\section*{Statements and Declarations}

{\bf Funding:} This work was supported by the National Natural Science Foundation of China (No. 11971032); Ji was supported by a scholarship from the China Scholarship Council
 (No. 202306500022).

\noindent{\bf Conflict of Interest}: The authors declare that they have no conflict of interest.

\noindent{\bf Data availability statement}: No data was used for the research described in this work.

\end {document}